\numberwithin{equation}{section}
\newtheorem{definition}{Definition}[section]
\newtheorem{theorem}{Theorem}[section]
\newtheorem{corollary}[theorem]{Corollary}
\newtheorem{proposition}[theorem]{Proposition}
\newtheorem{lemma}[theorem]{Lemma}
\newtheorem{remark}[theorem]{Remark}
\def\E{\mathbb{E}}
\def\R{\mathbb{R}}
\def\root{o}
\def\mT{\mathcal T}
\title{Mathematical Analysis for a Class of Stochastic Copolymerization Processes}
\author{
David F. Anderson\thanks{Department of Mathematics, University of
  Wisconsin-Madison, USA.  anderson@math.wisc.edu, grant support from NSF-DMS-2051498.}
\and 
Jingyi Ma\thanks{Department of Mathematics, University of
  Wisconsin-Madison, USA.  jma276@wisc.edu.  Corresponding author.}
\and
Praful Gagrani\thanks{Institute of Industrial Science, The University of Tokyo, Japan. praful@sat.t.u-tokyo.ac.jp }
}
\begin{document}

\maketitle

\begin{abstract} 
     We study a stochastic model of a copolymerization process that has been extensively investigated in the physics literature. The main questions of interest include: (i) what are the criteria for transience, null recurrence,  and positive recurrence in terms of the system parameters; (ii) in the transient regime, what are the limiting fractions of the different monomer types; and (iii) in the transient regime, what is the speed of growth of the polymer? Previous studies in the physics literature have addressed these questions using heuristic methods. Here, we utilize rigorous mathematical arguments to derive the results from the physics literature.  Moreover, the techniques developed  allow us to generalize to the copolymerization process with finitely many monomer types. We expect that the mathematical methods used and developed in this work will also enable the study of even more complex models in the future.
     \\
     
     {\bf Keywords: Continuous-time Markov chain; copolymerization; recurrence and transience; boundary process; tree-like state space; origin of life; stochastic modeling; polymer growth} 
     
     {\bf MSC: 60J27, 92C40, 60J20, 82C99} 
\end{abstract}

\section{Introduction}

All known forms of life are composed of cells, which contain long, self-replicating polymers that encode and transmit genetic information. Gaining a comprehensive understanding of the mathematical principles that govern polymer growth involving two or more monomer types (copolymerization) within a well-defined stochastic framework could therefore be essential for understanding the processes underlying the origin of life and the evolution of the genetic code \cite{gagrani2025evolution, koonin2009origin, nowak2008prevolutionary}. Despite considerable progress, a fully developed mathematical formalization of the biologically fundamental copolymer processes—such as DNA replication, wherein a copolymer grows and acquires information guided by another template copolymer—remains an open challenge \cite{gaspard2020template}.

Andrieux and Gaspard \cite{andrieux2008nonequilibrium} were early adopters of a Markovian model of copolymerization, recognizing that the sequence of monomers in the polymer can be described by a continuous-time Markov chain. Thereafter, Esposito, et al.\ \cite{esposito2010extracting} analyzed the thermodynamic efficiency of copolymerization processes using a stochastic kinetic framework, deriving explicit expressions for limiting composition fractions and growth velocity. Their work was grounded in nonequilibrium thermodynamics and relied on entropic arguments, but it did not define the process as a Markov chain nor use formal probabilistic methods in the analysis. 
Similarly, in subsequent work, Gaspard and Andrieux \cite{gaspard2014kinetics}, and later Gaspard alone \cite{gaspard2016kinetics}, developed a framework for these processes and gave explicit expressions for the mean growth velocity and entropy production. While their results were derived analytically, the arguments remained largely heuristic from a mathematical standpoint, relying on thermodynamic consistency and detailed balance identities rather than formal probabilistic arguments.

Building on these developments, in the present work, we revisit this class of models from a mathematical perspective. By recasting the dynamics as a continuous-time Markov process on an infinite tree-like state space, we establish recurrence and transience criteria, and derive almost-sure laws for polymer growth and composition using the theory of Markov chains on trees with finitely many ``cone types'' \cite{woess2009denumerable}. 

In this work, we study a simple copolymerization model in which a set of $d$ monomers, which we will denote throughout via $\mathcal{M} = \{M_1,\dots, M_d\}$,  attach to or detach from the tip of a polymer. This setup reflects a physical constraint: monomers cannot easily insert themselves into the middle of a tightly bound chain. It is also biologically relevant--for example, RNA polymerase extends RNA strands by adding nucleotides ($d=4$) to the 3’ end. 
Despite its simplicity, the model can exhibit quite interesting behavior, especially in the transient regime where the polymer will, with a probability of one, grow without bound.

We also assume that the binding and unbinding rates (affinities) for the different monomers are different, but fixed (i.e., do not depend upon the rest of the polymer chain). This framework can later be extended to address several biologically significant questions. For instance, incorporating sequence-dependent binding affinities allows the model to capture the behavior of template-based polymerization, such as RNA replication \cite{andrieux2008nonequilibrium}. More broadly, a central question in origins-of-life research is whether long polymers can emerge spontaneously or whether ecological interactions are necessary to sustain them \cite{gagrani2025evolution}. Our model provides a principled null model for rigorously exploring such questions.

The organization of the remainder of the paper is as follows.  In Section \ref{sec:math_model}, we introduce the formal mathematical model for the process considered in this paper. Moreover, we more formally state the questions we will address.
In Section \ref{Criterion for Positive Recurrence, Null Recurrence, and Transience}, we establish conditions on the parameters of the model
for when the model is transient, null recurrent, or positive recurrent.  The results of this section are relatively straightforward. 
In Section \ref{Limiting portion of each type of monomer}, we characterize the asymptotic composition of the growing polymer chain in the transient regime. Specifically, for each monomer, $M_i$, we derive the almost sure limiting fraction of that monomer in the growing polymer, as $t \to \infty$.  These fractions will be denoted via $\bar{\sigma}_i$, and are given as functions of the parameter set.
In Section \ref{Asymptotic Growth Rate}, we again consider the transient regime and characterize the rate of growth of the polymer.  Specifically, we establish the existence of a deterministic value $v > 0$, which we derive as a function of the parameter set, such that the polymer length, denoted $|X(t)|$ below, satisfies
   \begin{align*}
       \lim_{t \to \infty} \frac{|X(t)|}{t} = v, \quad \text{almost surely}.
   \end{align*}
   Section \ref{Asymptotic Growth Rate} is the largest part of this paper and contains the bulk of our novel results.
   In Section~\ref{Example: Copolymerization process involving two monomer types}, we restrict to the case of only two monomers (i.e., $d = 2$), which was the setting of our motivating work~\cite{esposito2010extracting}. By restricting our general results to this case, we are able to derive more explicit expressions and provide numerical simulations that help visualize the polymer growth behavior.  This setting not only allows for closed-form analysis, but also serves as a useful comparison for our mathematical treatment with the thermodynamic treatment in~\cite{esposito2010extracting}.

Before proceeding, we explicitly note that throughout this paper we assume a basic knowledge of Markov chains at the level of, for example, the text by Norris \cite{norris1998markov}.

\section{Mathematical model}
\label{sec:math_model}

As mentioned in the introduction, we consider a copolymerization process with finitely many monomer types, $\mathcal{M} = \{M_1,\dots, M_d\}$, with $d \ge 1$. A polymer is then defined as a finite sequence of monomers. Hence, the state space of our model is the set of all finite sequences of monomers, including the polymer consisting of zero monomers, which we denote by $\root$ and refer to as the ``root''.  Thus, if, for example, $d = 3$, the set of polymers includes $\root$, $M_1$, $M_2$, $M_3$, $M_1M_2$, $M_2M_2$, $M_3M_1$, $M_2M_2M_2$, $M_1M_3M_2$, and so forth.  We will denote the state space by $\mT$.  Our resulting continuous-time Markov chain (CTMC) will be denoted by $X$ so that $X(t) \in \mT$ is the state of the process at time $t$.

We turn to the possible transitions of the process. The polymer itself may change in only one of two ways:
\begin{itemize}
    \item[(i)] by having a single monomer of some type, $M_i$, $i \in \{1,\dots,d\}$, attach to the end of the current polymer, or
    \item[(ii)] by having the monomer at the end of the polymer detach.
\end{itemize}
In the first case, if a polymer, denoted $x \in \mT$, has a monomer $M_i$ appended to it, then the new polymer is denoted $x M_i \in \mT$. For example, if $x = M_2M_2M_1M_1M_2$, then $x M_3 = M_2M_2M_1M_1M_2M_3$. Conversely, if the next event is a detachment, then $x M_i$ would transition to $x$.

We now specify the rates of the various transition types. For attachments, as mentioned in the introduction, we assume that the rate at which a monomer $M_i$ is appended to a polymer $x$ depends only on the monomer type, not on the polymer itself. 
We denote these attachment rates by $k_i^+ \in \R_{>0}$, for $i \in \{1, \dots, d\}$. Thus, denoting the transition rates for the process via $q: \mT\times \mT \to \R_{>0}$,  for any $x \in \mT$ and $i \in \{1, \dots, d\}$,
\begin{align*}
    q(x, x M_i) = \lim_{h\to 0^+} \frac{P(X(t+h) = xM_i \ |\ X(t) = x)}{h} = k_i^+, \qquad \text{ for all $t \ge 0$}.
\end{align*}
Similarly, the detachment rates depend only on the identity of the last monomer in the polymer. That is, for appropriate values $k_i^- \in \R_{>0}$, we have
\begin{align*}
    q(x M_i, x) = k_i^-, \quad \text{for } x \in \mT.
\end{align*}
Note that the total rate $q(x)$ out of a state $x$ determines the parameter for the exponential holding time at state $x$. In particular, $q(\root) =\sum_{y\ne \root} q(\root,y) =  \sum_{i = 1}^d k_i^+$ and, for any $x \in \mT$, $q(xM_j)= \sum_{y\ne xM_j} q(xM_j,y) = k_j^- + \sum_{i = 1}^d k_i^+$. Note that these values are uniformly bounded, and hence the process is necessarily non-explosive \cite{norris1998markov}.

We recall that the graph of a Markov chain is defined in the following manner:
\begin{enumerate}
    \item the vertices of the graph are given by the state space;
    \item the directed edges of the graph, denoted as either $(x,y)$ or $x \to y$, for $x,y\in \mathcal{T}$, are determined by the transitions of the chain;
    \item the labels on the edges are determined by the transition rates (in the case of a continuous-time Markov chain) and by the transition probabilities (in the case of a discrete-time Markov chain).
\end{enumerate}

Note that the process we are considering is a continuous-time Markov chain whose graph  is a tree (with root $\root$). (For more on Markov chains on trees, we refer to \cite{woess2009denumerable}.) We will denote the graph of the process by $T$.  For example, in the case of 2 monomers, $M_1$ and $M_2$, the process can be visualized via the graph in Figure \ref{fig:2monomerTree}, with growth progressing downward and detachment corresponding to upward edges. Each vertex represents a polymer (i.e., a finite sequence of monomers), and directed edges correspond to monomer attachment or detachment at the end of the polymer chain. 
Arrows labeled with $k_1^+$ or $k_2^+$ indicate the rate of appending the monomers $M_1$ and $M_2$, respectively, while  arrows labeled with $k_1^-$ or $k_2^-$ represent the rate at which the ending monomer detaches.

\begin{figure}
    \centering
    \begin{tikzpicture}[
  scale=0.55, transform shape,
  node distance=3cm and 3cm,
  every path/.style={->, >=stealth, thick}]

\node (root) at (0,0) {$\root$};
\node (M1) [below left=of root, xshift=+5mm] {$M_1$};
\node (M2) [below right=of root, xshift=-5mm] {$M_2$};
\node (M1M1) [below left=of M1] {$M_1M_1$};
\node (M1M2) [below=of M1] {$M_1M_2$};
\node (M2M1) [below=of M2] {$M_2M_1$};
\node (M2M2) [below right=of M2] {$M_2M_2$};
\node (M1M1M1) [below left=of M1M1, xshift=-10mm] {$M_1M_1M_1$};
\node (M1M1M2) [below=of M1M1, xshift=-12mm] {$M_1M_1M_2$};
\node (M1M2M1) [below=of M1M2, xshift=-20mm] {$M_1M_2M_1$};
\node (M1M2M2) [below right=of M1M2, xshift=-30mm] {$M_1M_2M_2$};
\node (M2M1M1) [below left=of M2M1, xshift=+30mm] {$M_2M_1M_1$};
\node (M2M1M2) [below=of M2M1, xshift=+20mm] {$M_2M_1M_2$};
\node (M2M2M1) [below=of M2M2, xshift=+12mm] {$M_2M_2M_1$};
\node (M2M2M2) [below right=of M2M2, xshift=+10mm] {$M_2M_2M_2$};
\node (dots1) [below=1cm of M1M1M1, rotate=90] {...};
\node (dots2) [below=1cm of M1M1M2, rotate=90] {...};
\node (dots3) [below=1cm of M1M2M1, rotate=90] {...};
\node (dots4) [below=1cm of M1M2M2, rotate=90] {...};
\node (dots5) [below=1cm of M2M1M1, rotate=90] {...};
\node (dots6) [below=1cm of M2M1M2, rotate=90] {...};
\node (dots7) [below=1cm of M2M2M1, rotate=90] {...};
\node (dots8) [below=1cm of M2M2M2, rotate=90] {...};
\path (root) edge node[below right] {$k_1^+$} (M1);
\path (root) edge node[below left]  {$k_2^+$} (M2);
\path (M1) edge node[below right] {$k_1^+$} (M1M1);
\path (M1) edge node[below left]  {$k_2^+$} (M1M2);
\path (M2) edge node[below right] {$k_1^+$} (M2M1);
\path (M2) edge node[below left]  {$k_2^+$} (M2M2);
\path (M1M1) edge node[below right] {$k_1^+$} (M1M1M1);
\path (M1M1) edge node[left]  {$k_2^+$} (M1M1M2);
\path (M1M2) edge node[below right] {$k_1^+$} (M1M2M1);
\path (M1M2) edge node[below left]  {$k_2^+$} (M1M2M2);
\path (M2M1) edge node[below right] {$k_1^+$} (M2M1M1);
\path (M2M1) edge node[below left]  {$k_2^+$} (M2M1M2);
\path (M2M2) edge node[right] {$k_1^+$} (M2M2M1);
\path (M2M2) edge node[below left]  {$k_2^+$} (M2M2M2);
\path (M1) edge[bend left] node[above left] {$k_1^-$} (root);
\path (M2) edge[bend right] node[above right] {$k_2^-$} (root);
\path (M1M1) edge[bend left] node[above left] {$k_1^-$} (M1);
\path (M1M2) edge[bend right] node[right] {$k_2^-$} (M1);
\path (M2M1) edge[bend left] node[left]  {$k_1^-$} (M2);
\path (M2M2) edge[bend right] node[above right] {$k_2^-$} (M2);
\path (M1M1M1) edge[bend left] node[above left] {$k_1^-$} (M1M1);
\path (M1M1M2) edge[bend right] node[right] {$k_2^-$} (M1M1);
\path (M1M2M1) edge[bend left] node[left] {$k_1^-$} (M1M2);
\path (M1M2M2) edge[bend right] node[right] {$k_2^-$} (M1M2);
\path (M2M1M1) edge[bend left] node[left] {$k_1^-$} (M2M1);
\path (M2M1M2) edge[bend right] node[right] {$k_2^-$} (M2M1);
\path (M2M2M1) edge[bend left] node[left] {$k_1^-$} (M2M2);
\path (M2M2M2) edge[bend right] node[above right] {$k_2^-$} (M2M2);

\end{tikzpicture}
    \caption{Reaction graph, $T$, of the copolymerization process involving two monomer types. Each vertex corresponds to a polymer, and edges represent possible transitions due to monomer attachment and detachment. Note the tree-like structure of the graph. }
    \label{fig:2monomerTree}
\end{figure}
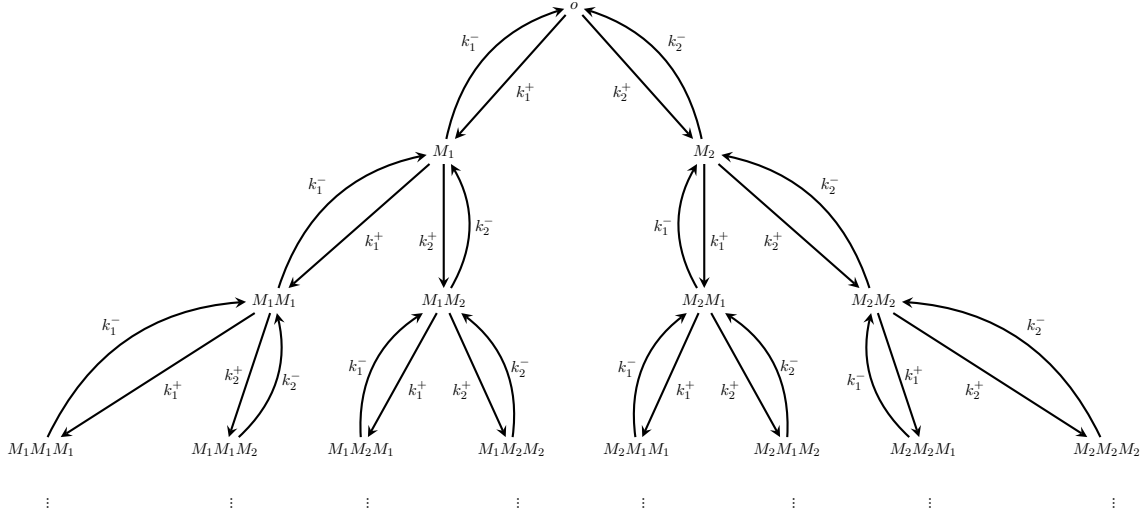

Returning to the general case of $d$ monomers, we write $|x|$ for the length of a polymer $x \in \mT$, i.e., the number of monomers in the polymer. When $|x| \geq 1$, we define the predecessor $x^{-}$ of $x$ to be the unique neighbor of $x$ that is closer to the root, so that $\left|x^{-}\right| = |x| - 1$. For example, if $x = M_1M_2M_3$, then $x^{-} = M_1M_2$.

We denote the embedded discrete-time Markov chain (DTMC) for the process $X$ via $Z$.  Specifically, if we denote $\tau_n$ as the $n$th jump time of the process $X$, with $\tau_0$ taken to be zero, then $Z_n = X(\tau_n)$ \cite{norris1998markov}. In this case, the transition probabilities, $\{p(x,y)\}_{x,y\in \mT}$ of $Z$ satisfy the following:

\begin{itemize}
    \item for $x \in \mT$, $j \in \{1,\dots,d\}$, we have
    \begin{align}
    \label{transition matrix P}
        \begin{split}
        p(xM_j,xM_jM_i) &= \frac{k_i^+}{k_j^-+ \sum_{r=1}^d k_r^+}, \quad \text{for all } i=1,\dots,d, \\
        p(xM_j,x) &= \frac{k_j^-}{k_j^-+\sum_{r=1}^d k_r^+};
        \end{split}
    \end{align}
    \item for the root $\root$, we have
    \begin{align*}
        p(\root, M_i)  = \frac{k_i^+}{\sum_{r=1}^d k_r^+}, \quad \text{for all } i=1,\dots,d.
    \end{align*}
\end{itemize}

After setting up the  model, we can now clearly state the main questions we study in this paper.

\vspace{.1in}

\noindent \textbf{Question 1.} What are the criteria on the parameters $\{k_i^+,k_i^-\}_{i=1}^d$ for when the process $X$ is transient, null recurrent, or positive recurrent? 

\vspace{.1in}

\noindent \textbf{Question 2.}   When the process $X$ is transient, what is the limiting proportion of the $d$ different monomer types, as functions of the parameters $\{k_i^+,k_i^-\}_{i=1}^d$? Specifically, if at time $t$ we denote the length of the polymer by $|X(t)|$, and the number of monomers of type $M_i$ by $N_i^X(t)$, then we want to know if there are values $\bar \sigma_i \in [0,1]$ for which 
    \begin{align*}
        \lim_{t\to \infty} \frac{N_i^X(t)}{|X(t)|} = \bar \sigma_i, \quad \text{for all } i = 1,\dots,d,
    \end{align*}
    almost surely. Moreover, we want to calculate the values $\bar \sigma_i$.

\vspace{.1in}

\noindent \textbf{Question 3.}  When the process $X$ is transient, what is the limiting velocity of the process? Specifically, we would like to know if there is a value $v \in (0,\infty)$ for which 
    \begin{align*}
        \lim_{t\to \infty} \frac{|X(t)|}{t} = v, 
    \end{align*}
    almost surely. Moreover, we want to calculate the value $v$.

Before proceeding to the technical proofs that address Questions 1--3, we briefly comment on the relationship between natural heuristic arguments and the more probabilistic approach developed in this paper. A common first instinct in growth models of this type is to attempt to describe the long-time behavior by writing down formal evolution equations for suitable averaged quantities, with the goal of extracting limiting velocities or asymptotic compositions from such relations. 

However, even at this heuristic level, such arguments inevitably involve the distribution of the terminal (tip) monomer at time $t$, since attachment and detachment events depend only on the current end of the polymer. This tip distribution is not directly determined by the global empirical composition of the polymer, which instead reflects its bulk structure. As a result, these heuristic equations do not form a closed system in terms of the most natural macroscopic observables.

To rigorously relate these two levels of description, we introduce in Section~\ref{Limiting portion of each type of monomer} an auxiliary \emph{boundary process}, which records the terminal monomer types along successive growth events of the polymer. This process allows us to establish almost-sure limits for the empirical composition. In Section~\ref{Asymptotic Growth Rate}, we then use these structural results, together with a careful analysis of the continuous-time dynamics between growth events, to obtain a rigorous characterization of the asymptotic growth velocity. A key technical difficulty is that these growth events do not occur at stopping times for the original process, so their cumulative effect cannot be analyzed by standard renewal or martingale arguments without additional work.

\section{Criterion for positive recurrence, null recurrence, and transience}
\label{Criterion for Positive Recurrence, Null Recurrence, and Transience}

Let $\alpha = \sum_{i = 1}^d \frac{k_i^+}{k_i^-}.$
In this section, we prove that $\alpha$ determines the recurrence properties of the CTMC $X$.  Specifically, we will prove the following theorem.

\begin{theorem}
\label{thm:question1}
 If $\alpha < 1$, then the process $X$ is positive recurrent. If $\alpha = 1$, then $X$ is null recurrent.  If $\alpha > 1$,  then $X$ is transient.
\end{theorem}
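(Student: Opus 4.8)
The plan is to reduce everything to a single explicit invariant measure together with a self-similar hitting-probability computation. Since $X$ is non-explosive with uniformly bounded jump rates, its recurrence, transience, and positive/null classification agree with those of the embedded chain $Z$, and are all governed by the stationary measure of the generator. Because the graph $T$ is a tree it contains no cycles, so Kolmogorov's reversibility criterion holds automatically, and I would first write down the reversible measure directly along the unique root-to-$x$ path: for $x=M_{i_1}\cdots M_{i_n}$ set $\pi(x)=\pi(\root)\prod_{l=1}^{n}(k_{i_l}^+/k_{i_l}^-)$. A one-line check shows detailed balance $\pi(x)k_i^+=\pi(xM_i)k_i^-$ on every edge, so $\pi$ is stationary for the generator. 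Summing $\pi$ over all polymers of a fixed length $n$ factorizes to $\alpha^n$, whence the total mass is $\pi(\root)\sum_{n\ge 0}\alpha^n$, finite precisely when $\alpha<1$.

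The positive-recurrent case is then immediate: for $\alpha<1$ the measure $\pi$ normalizes to a stationary distribution, and for an irreducible CTMC the existence of a stationary distribution is equivalent to positive recurrence. For the remaining cases I would separate recurrence from transience via a self-similarity (cone-type) argument. Writing $K^+=\sum_r k_r^+$, the subtree hanging below any vertex with terminal monomer $M_i$ is isomorphic to every other such subtree with identical rates, so the probability $h_i$ that the chain started at $xM_i$ ever reaches the predecessor $x$ depends only on $i$. A first-step decomposition (either detach immediately, or append some $M_j$ and then descend back to $x$ in two stages, with probability $h_j h_i$) yields
\[
h_i=\frac{k_i^-}{k_i^-+K^+}+\frac{h_i}{k_i^-+K^+}\sum_{j=1}^d k_j^+h_j,
\]
and the vector $(h_i)$ is characterized as the minimal nonnegative solution of this system.

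I would then solve the system by setting $u=K^+-\sum_j k_j^+h_j\ge 0$, which forces $h_i=k_i^-/(k_i^-+u)$ and reduces the consistency condition to $u\,(1-g(u))=0$, where $g(u)=\sum_{i}k_i^+/(k_i^-+u)$ is strictly decreasing with $g(0)=\alpha$ and $g(u)\to 0$ as $u\to\infty$. Hence the admissible values of $u$ are $u=0$ (giving all $h_i=1$) when $\alpha\le 1$, together with a unique extra root $u^*>0$ precisely when $\alpha>1$. Selecting the minimal solution, i.e.\ the largest admissible $u$, gives $h_i=1$ for all $i$ when $\alpha\le1$ and $h_i<1$ when $\alpha>1$; since the return probability to $\root$ equals $\sum_i(k_i^+/K^+)h_i$, the root is recurrent iff $\alpha\le 1$ and transient iff $\alpha>1$. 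Combining this with the mass computation closes the argument: for $\alpha=1$ the chain is recurrent but $\pi$ has infinite total mass, so no stationary distribution exists and the chain is null recurrent, while $\alpha>1$ is transient.

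The main obstacle is the recurrence/transience dichotomy rather than the positive/null split. One must justify carefully that the return probabilities $h_i$ are the \emph{minimal} nonnegative solution of the quadratic system and that this minimal solution corresponds to the largest admissible $u$, and one must handle the boundary $\alpha=1$ correctly (this is exactly where the second root of $u(1-g(u))=0$ coalesces with $u=0$). Once recurrence is pinned down by this argument, the distinction between positive and null recurrence follows cheaply from the explicit reversible measure and the geometric-series structure of its total mass.
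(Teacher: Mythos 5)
Your proposal is correct, and for the heart of the theorem---the recurrence/transience dichotomy---it takes a genuinely different route from the paper. The paper packages the self-similarity into the cone-type matrix with entries $k_j^+/k_i^-$, observes it is rank one with spectral radius $\alpha$, and then cites the spectral criterion for trees with finitely many cone types (Theorem 9.78 in Woess); your first-step analysis of the return probabilities $h_i$ replaces that citation with an elementary, self-contained computation. (Incidentally, your $u$ and $h_i$ are exactly the paper's $m$ and $F_i$, which the paper only derives later, in Section 4, again by citation to Woess; your relation $u=\sum_j k_j^+(1-h_j)$ is precisely the escape-rate interpretation in the paper's Remark 4.3.) The positive/null part of your argument coincides with the paper's: same measure, same multinomial mass computation $\sum_n \alpha^n$, and the same appeal to uniqueness of invariant measures for irreducible recurrent chains---you should make that uniqueness explicit (Norris, Theorem 3.5.2, as the paper does), since ``$\pi$ has infinite total mass'' only rules out a stationary distribution once you know every invariant measure is a scalar multiple of $\pi$; your verification via detailed balance on edges is slightly slicker than the paper's direct check of global balance. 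The one step you flag as the ``main obstacle,'' minimality, deserves a sharper resolution than you give, and it splits into two cases of unequal difficulty. For $\alpha\le 1$ no minimality principle is needed at all: the true return probabilities satisfy your quadratic system and lie in $[0,1]^d$, every such solution has the form $h_i=k_i^-/(k_i^-+u)$ with $u\ge 0$ and $u\left(1-g(u)\right)=0$, and for $\alpha\le 1$ the only admissible value is $u=0$; recurrence follows with no selection argument. For $\alpha>1$, where both $u=0$ and $u=u^*$ give $[0,1]$-valued solutions, the clean way to finish is not to prove minimality for the quadratic system directly, but to extend any quadratic solution multiplicatively along root-to-$x$ paths, setting $\psi(M_{i_1}\cdots M_{i_n})=h_{i_1}\cdots h_{i_n}$ and $\psi(\root)=1$: the quadratic equations show $\psi$ solves the full linear hitting system for the root, so the classical minimality theorem for hitting probabilities (Norris, Theorem 1.3.2, applied to the embedded chain $Z$) gives that the true return probability from $M_i$ is at most $k_i^-/(k_i^-+u^*)<1$, and transience follows since the return probability to $\root$ is then $\sum_i \bigl(k_i^+/K^+\bigr)h_i<1$. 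With that argument supplied, your proof is complete and arguably more elementary than the paper's, at the cost of not exhibiting the cone-type machinery that the paper reuses in Sections 4 and 5.
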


To prove the theorem, we first analyze the criteria for recurrence and transience, postponing the distinction between null and positive recurrence until the end. For determining recurrence or transience of the CTMC $\{X(t)\}_{t \geq 0}$, it is sufficient to study the corresponding criteria for the embedded DTMC $\{Z_n\}_{n \in \mathbb{N}}$ \cite[Theorem 3.4.1]{norris1998markov}. This first portion of the proof is essentially an application of the material in \cite[Chapter 9]{woess2009denumerable} (though the second portion, distinguishing between null and positive recurrence, is not).

The plan for the proof of Theorem \ref{thm:question1} is to leverage a particular symmetry of the process.  Specifically, for each monomer, $M_i$, the states of the form $xM_i$, for any $x \in \mT$, are similar in a sense that will be made precise below.  This will allow us to define $d$ different classes, termed ``cone types,'' one for each monomer type $M_i$, $i \in \{1,\cdots,d\}$.  We will then define a matrix $A$ associated with the various cone types, and the spectral radius of $A$ will determine whether the process is recurrent or transient.

We begin with two key definitions:

\begin{definition}
\label{cone}
For $x \in \mT\setminus \{\root\}$, we define 
\begin{align*}
\mT_x &= \{z \in \mT : \text{the first } |x| \text{ monomers of } z \text{ is exactly } x\},
\end{align*}
and $\overline \mT_x = \mT_x \cup\{x^-\}$.
We  define $T_x$ to be the graph with vertices $\overline \mT_x$ and directed edges 
\[
    \{y\to z: y\to z \text{ is an edge of } \ T \text{ and } y \in \mT_x\},
\]
which are precisely the edges with ``starting'' polymer contained within $\mT_x$ (so that $x\to x^-$ is included but $x^-\to x$ is not).  Finally, the labels for the edges of the graph $T_x$ are inherited from $T$.  That is, the label for $y\to z$ in $T_x$ is the same as the label for $y\to z$ in $T$. 
\end{definition}

Note that $T_x$ can be viewed as a subtree rooted at $x$, containing all extensions of $x$, such as $xM_i$, $xM_iM_j$, $xM_iM_jM_k$, and so on.  For technical reasons, it also includes the precursor state $x^-$ and the transition from $x$ to $x^-$.

\begin{definition}
\label{cone type}
The two subtrees  $T_{x}$ and $T_{y}$ are isomorphic if there is a root-preserving bijection between their underlying graphs that preserves edges and labels.  
We will term the isomorphism classes \emph{cone types} and for $x \ne \root$ denote the cone type of $T_x$ as $C(x)$.
\end{definition}

Based on Definitions \ref{cone} and \ref{cone type}, for each monomer type $M_i \in \mathcal{M}$, the associated subtrees  rooted at $xM_i$ share the same cone type, which we denote by $C_i$. Thus, the number of cone types is exactly $d$, one for each monomer type.   Note that $C$ is a function from $\mT$ to $I = \{C_1,\dots,C_d\}$ defined via $C(xM_i) = C_i$ for all $x\in \mT$.  For technical reasons later, we will want $C$ to be defined on the root as well and so we  define $C(\root) = C_0$, but we do not call $C_0$  a cone type. Finally, when referring to the ``cone type of $x$'', we always mean the cone type of the associated subtree  $T_x$.

For a visual example, we again return to the case of two monomers. See Figure~\ref{fig:cone type tree} for a version of Figure~\ref{fig:2monomerTree}, but with two representative subtrees of cone type $C_1$ colored blue and two representative subtrees of cone type $C_2$ colored red. There are, in fact, infinitely many subtrees of each cone type, and each such subtree is infinite (for example, the subtree $T_{M_1}$ is also of cone type $C_1$), but only a small number are shown in the figure for visual clarity.
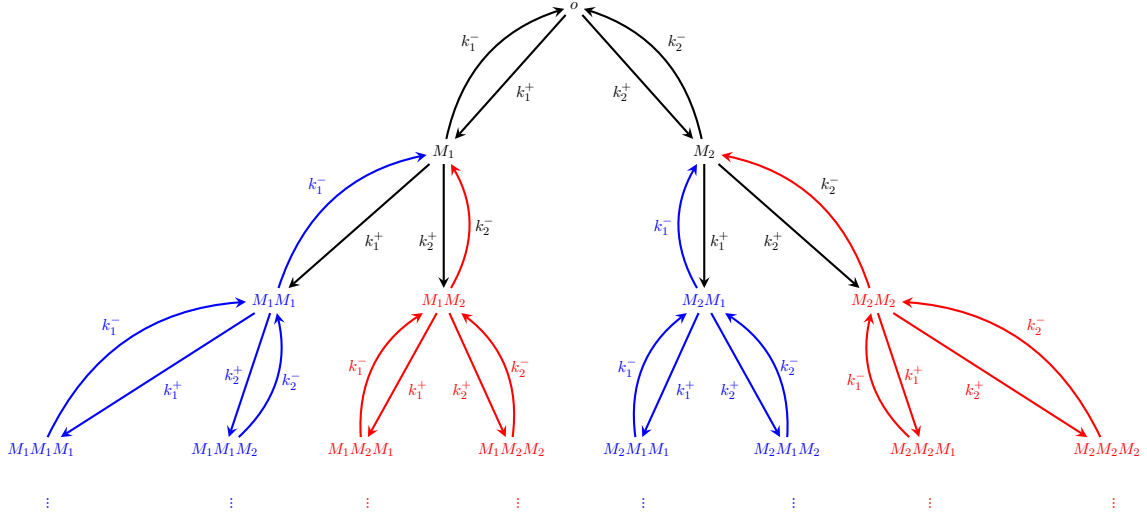
\begin{figure}
    \centering
    \begin{tikzpicture}[
  scale=0.55, transform shape,
  node distance=3cm and 3cm,
  every path/.style={->, >=stealth, thick}]

\node (root) at (0,0) {$\root$};
\node (M1) [below left=of root, xshift=+5mm] {$M_1$};
\node (M2) [below right=of root, xshift=-5mm] {$M_2$};
\node (M1M1) [below left=of M1] {\textcolor{blue}{$M_1M_1$}};
\node (M1M2) [below=of M1] {\textcolor{red}{$M_1M_2$}};
\node (M2M1) [below=of M2] {\textcolor{blue}{$M_2M_1$}};
\node (M2M2) [below right=of M2] {\textcolor{red}{$M_2M_2$}};
\node (M1M1M1) [below left=of M1M1, xshift=-10mm] {\textcolor{blue}{$M_1M_1M_1$}};
\node (M1M1M2) [below=of M1M1, xshift=-12mm] {\textcolor{blue}{$M_1M_1M_2$}};
\node (M1M2M1) [below=of M1M2, xshift=-20mm] {\textcolor{red}{$M_1M_2M_1$}};
\node (M1M2M2) [below right=of M1M2, xshift=-30mm] {\textcolor{red}{$M_1M_2M_2$}};
\node (M2M1M1) [below left=of M2M1, xshift=+30mm] {\textcolor{blue}{$M_2M_1M_1$}};
\node (M2M1M2) [below=of M2M1, xshift=+20mm] {\textcolor{blue}{$M_2M_1M_2$}};
\node (M2M2M1) [below=of M2M2, xshift=+12mm] {\textcolor{red}{$M_2M_2M_1$}};
\node (M2M2M2) [below right=of M2M2, xshift=+10mm] {\textcolor{red}{$M_2M_2M_2$}};
\node (dots1) [below=1cm of M1M1M1, rotate=90, text=blue] {...};
\node (dots2) [below=1cm of M1M1M2, rotate=90, text=blue] {...};
\node (dots3) [below=1cm of M1M2M1, rotate=90, text=red] {...};
\node (dots4) [below=1cm of M1M2M2, rotate=90, text=red] {...};
\node (dots5) [below=1cm of M2M1M1, rotate=90, text=blue] {...};
\node (dots6) [below=1cm of M2M1M2, rotate=90, text=blue] {...};
\node (dots7) [below=1cm of M2M2M1, rotate=90, text=red] {...};
\node (dots8) [below=1cm of M2M2M2, rotate=90, text=red] {...};
\path (root) edge node[below right] {$k_1^+$} (M1);
\path (root) edge node[below left]  {$k_2^+$} (M2);
\path (M1) edge node[below right] {$k_1^+$} (M1M1);
\path (M1) edge node[below left]  {$k_2^+$} (M1M2);
\path (M2) edge node[below right] {$k_1^+$} (M2M1);
\path (M2) edge node[below left]  {$k_2^+$} (M2M2);
\path (M1M1) edge[draw=blue] node[below right, text=blue] {$k_1^+$} (M1M1M1);
\path (M1M1) edge[draw=blue] node[left, text=blue]  {$k_2^+$} (M1M1M2);
\path (M1M2) edge[draw=red] node[below right, text=red] {$k_1^+$} (M1M2M1);
\path (M1M2) edge[draw=red] node[below left, text=red]  {$k_2^+$} (M1M2M2);
\path (M2M1) edge[draw=blue] node[below right, text=blue] {$k_1^+$} (M2M1M1);
\path (M2M1) edge[draw=blue] node[below left, text=blue]  {$k_2^+$} (M2M1M2);
\path (M2M2) edge[draw=red] node[right, text=red] {$k_1^+$} (M2M2M1);
\path (M2M2) edge[draw=red] node[below left, text=red]  {$k_2^+$} (M2M2M2);
\path (M1) edge[bend left] node[above left] {$k_1^-$} (root);
\path (M2) edge[bend right] node[above right] {$k_2^-$} (root);
\path (M1M1) edge[bend left, draw=blue] node[above left, text=blue] {$k_1^-$} (M1);
\path (M1M2) edge[bend right, draw=red] node[right] {$k_2^-$} (M1);
\path (M2M1) edge[bend left, draw=blue] node[left, text=blue]  {$k_1^-$} (M2);
\path (M2M2) edge[bend right, draw=red] node[above right] {$k_2^-$} (M2);
\path (M1M1M1) edge[bend left, draw=blue] node[above left, text=blue] {$k_1^-$} (M1M1);
\path (M1M1M2) edge[bend right, draw=blue] node[right, text=blue] {$k_2^-$} (M1M1);
\path (M1M2M1) edge[bend left, draw=red] node[left, text=red] {$k_1^-$} (M1M2);
\path (M1M2M2) edge[bend right, draw=red] node[right, text=red] {$k_2^-$} (M1M2);
\path (M2M1M1) edge[bend left, draw=blue] node[left, text=blue] {$k_1^-$} (M2M1);
\path (M2M1M2) edge[bend right, draw=blue] node[right, text=blue] {$k_2^-$} (M2M1);
\path (M2M2M1) edge[bend left, draw=red] node[left, text=red] {$k_1^-$} (M2M2);
\path (M2M2M2) edge[bend right, draw=red] node[above right, text=red] {$k_2^-$} (M2M2);

\end{tikzpicture}
    \caption{Reaction graph $T$ of the copolymerization process with two monomer types $M_1$ and $M_2$.  Vertices and edges in blue correspond to the subtrees $T_{M_1M_1}$ and $T_{M_2M_1}$, both having cone type $C_1$,  while those in red correspond to the subtrees $T_{M_1M_2}$ and $T_{M_2M_2}$, both having cone type $C_2$.}
    \label{fig:cone type tree}
\end{figure}

We are in position to prove the main theorem of this section.

\begin{proof}[Proof of Theorem \ref{thm:question1}]
We define  $A$,  a $d\times d$ matrix, whose spectral radius will determine whether the process is transient or recurrent.  
For $i,j \in \{1,\dots, d\}$, we set
\begin{align}
\label{C_i transition}
\begin{split}
    &a(C_i, C_j) = \frac{k_j^+}{k_i^- + \sum_{r=1}^d k_r^+},\quad \text{and} \quad 
    a(C_i^-)=\frac{k_i^-}{k_i^- + \sum_{r=1}^d k_r^+},
\end{split}
\end{align}
where $a(C_i,C_j)$ is the transition probability from a state $xM_i$ to $xM_iM_j$, and $a(C_i^-)$ is the probability of moving from $xM_i$ to $x$ (compare with \eqref{transition matrix P}).  Then, for $i, j \in \{1,\dots, d\}$, we define  (see formula (9.77) in \cite{woess2009denumerable})
\begin{align}
\label{matrix A def}
    A_{ij} = \frac{a(C_i, C_j)}{a(C_i^{-})} = \frac{k_j^+}{k_i^-}.
\end{align}

Therefore, the matrix $A$ takes the form:
\begin{align*}
    A =
\left( \frac{k_j^+}{k_i^-} \right)_{1 \leq i,j \leq d} =
\left(\begin{array}{cccc}
    \frac{k_1^+}{k_1^-} & \frac{k_2^+}{k_1^-} & \cdots & \frac{k_d^+}{k_1^-} \\[1ex]
    \frac{k_1^+}{k_2^-} & \frac{k_2^+}{k_2^-} & \cdots & \frac{k_d^+}{k_2^-} \\[1ex]
    \vdots & \vdots & \ddots & \vdots \\[1ex]
    \frac{k_1^+}{k_d^-} & \frac{k_2^+}{k_d^-} & \cdots & \frac{k_d^+}{k_d^-}
\end{array}\right).
\end{align*}
Observe that $A$ is a rank-one matrix of the form $A = \mathbf{u} \mathbf{v}^T$, where
\begin{align*}
    \mathbf{u} = \left( \frac{1}{k_1^-}, \frac{1}{k_2^-}, \dots, \frac{1}{k_d^-} \right)^T, \quad
    \mathbf{v} = \left( k_1^+, k_2^+, \dots, k_d^+ \right)^T.
\end{align*}
Such a matrix has one nonzero eigenvalue equal to the inner product $\mathbf{v}^T \mathbf{u} = \sum_{i=1}^d \frac{k_i^+}{k_i^-} > 0$, and the remaining $d-1$ eigenvalues are all zero. Hence, the spectral radius is
\begin{align*}
    \alpha = \sum_{i=1}^d \frac{k_i^{+}}{k_i^{-}}.
\end{align*}
Therefore, according to Theorem 9.78 in \cite{woess2009denumerable} and Theorem 3.4.1 in \cite{norris1998markov}, we may conclude the following.
\begin{itemize}
    \item If $\sum_{i=1}^d \frac{k_i^{+}}{k_i^{-}} \leq 1$, then the DTMC $Z$, and hence the CTMC $X$, is recurrent;
    \item If $\sum_{i=1}^d \frac{k_i^{+}}{k_i^{-}} > 1$, then the DTMC $Z$, and hence the CTMC $X$, is transient.
\end{itemize}

We now distinguish between positive and null recurrence for $X$. The process
$X$ is positive recurrent if and only if there exists a unique probability distribution $\{\mu(x) : x \in \mT\}$ satisfying the global balance equations
\begin{align}
\label{eq:balance}
    q(x)\mu(x) = \sum_{y \ne x} \mu(y) q(y, x),
\end{align}
where $q(x) = \sum_{y \ne x} q(x, y)$ is the total exit rate from state $x$ (see, for example, \cite[Theorem 3.5.3]{norris1998markov}). This characterization assumes that the process is non-explosive, which holds in our setting because the total jump rate from any state, namely $q(x)$, is uniformly bounded above (see \cite{norris1998markov}). 

We now define a measure $\mu : \mT \to \mathbb{R}_{\ge 0}$ by
\begin{align}
\label{balance equations}
    \mu(x) := \mu(\root) \prod_{i=1}^{d} \left( \frac{k_i^{+}}{k_i^{-}} \right)^{\beta_i(x)},
\end{align}
where $\beta_i(x)$ denotes the number of monomers of type $M_i$ in polymer $x$, and $\mu(\root)$ is a normalizing constant.

\begin{proposition}
The measure defined in \eqref{balance equations} satisfies the balance equations~\eqref{eq:balance}.
\end{proposition}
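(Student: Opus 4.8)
The plan is to verify the balance equations by first establishing the single multiplicative identity on which everything rests, and then reducing global balance to an edge-by-edge \emph{detailed balance} check, which is especially clean because the underlying graph $T$ is a tree. The key observation is that appending any monomer $M_i$ to a polymer $x$ increases $\beta_i$ by exactly one and leaves every other $\beta_j$ unchanged, so from the definition \eqref{balance equations} one immediately obtains the ratio identity
\begin{align*}
    \mu(xM_i) = \mu(x)\cdot \frac{k_i^+}{k_i^-}, \qquad \text{for all } x \in \mT, \ i \in \{1,\dots,d\}.
\end{align*}

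With this in hand, I would prove that $\mu$ is reversible, i.e.\ that $\mu(x)q(x,y) = \mu(y)q(y,x)$ holds for every edge of $T$. Every edge of the tree joins some $x$ to one of its children $xM_i$, with forward (attachment) rate $q(x,xM_i) = k_i^+$ and backward (detachment) rate $q(xM_i,x) = k_i^-$. The detailed balance condition on this edge is exactly $\mu(x)k_i^+ = \mu(xM_i)k_i^-$, which is precisely the ratio identity above rearranged. Hence detailed balance holds on every edge of $T$.

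Finally, I would deduce the global balance equations \eqref{eq:balance} from detailed balance. For any fixed state $x$, the only nonzero terms on the right-hand side of \eqref{eq:balance} come from the neighbors $y$ of $x$ in $T$, since $q(y,x) = 0$ otherwise; and for each such neighbor detailed balance gives $\mu(y)q(y,x) = \mu(x)q(x,y)$. Summing over all neighbors yields
\begin{align*}
    \sum_{y\ne x}\mu(y)q(y,x) = \sum_{y\ne x}\mu(x)q(x,y) = \mu(x)\sum_{y\ne x}q(x,y) = q(x)\mu(x),
\end{align*}
which is \eqref{eq:balance}. The only place requiring separate bookkeeping is the root: since $\root$ has no predecessor, its neighbors are exactly the states $M_i$, and the same reversibility identity applied to the edges $\root \to M_i$ closes the argument there as well.

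I do not expect a genuine obstacle in this proposition; the real content is the recognition that $\mu$ is a \emph{reversible} measure, for which the tree structure makes detailed balance automatic and global balance immediate. The only points demanding care are orienting each rate correctly (attachment $k_i^+$ versus detachment $k_i^-$) and handling the root as a boundary case. The genuinely substantive issue of when $\mu$ can be normalized to a probability measure---the convergence of $\sum_{x\in\mT}\mu(x)$, which is what actually distinguishes positive from null recurrence and ties back to $\alpha < 1$ in Theorem \ref{thm:question1}---is separate from the balance identity asserted here and is not part of this proposition.
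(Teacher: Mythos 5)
Your proof is correct, and it is organized around a genuinely stronger intermediate claim than the one the paper verifies. The paper's own proof checks the global balance equations \eqref{eq:balance} directly: it splits into the case $x = \root$ and the case of states of the form $xM_j$, expands both sides in each case, and matches them using the ratio identity \eqref{balance equation relationship}. You instead observe that this same ratio identity, rearranged, is exactly the detailed balance (reversibility) condition $\mu(x)\,q(x,xM_i) = \mu(xM_i)\,q(xM_i,x)$ on every edge of the tree, and then deduce global balance for all states at once by summing over neighbors. The algebra underneath is identical---the paper's case-by-case matching is, term by term, the detailed balance identity applied to the parent edge and to each child edge---but your organization buys two things: the root needs no separate computation (the summation argument treats every state uniformly, the root simply having fewer neighbors), and you make explicit that $\mu$ is reversible, a strictly stronger property than stationarity. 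It is worth noting that the paper itself exploits reversibility later, via the measure \eqref{measure} and Corollary \ref{reversibility} in the growth-rate analysis, so your proof surfaces a structural fact the paper only uses afterwards. Your closing remark is also accurate: the normalizability of $\mu$, i.e.\ whether $\sum_{x\in\mT}\mu(x)$ is finite, is not part of this proposition and is handled separately in the proof of Theorem \ref{thm:question1}.
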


\begin{proof}
We simply check that the balance equation \eqref{eq:balance} holds for each state $x$. We make use of the fact that for any $x \in \mT$ and $M_i \in \mathcal{M}$, 
\begin{align}
\label{balance equation relationship}
    \mu(x M_i) = \mu(x) \left(\frac{k_i^{+}}{k_i^{-}}\right).
\end{align}

\begin{itemize}
    \item We begin by  verifying \eqref{eq:balance} for the root, $x = \root$.  Since $q(\root) =  \sum_{i=1}^{d} k_i^{+}$, we have $q(\root)\mu(\root) = \left( \sum_{i=1}^{d} k_i^{+} \right) \mu(\root).$
    Moreover, 
    \begin{align*}
        \sum_{y \neq \root} \mu(y) q(y, \root) &= \sum_{i=1}^{d} \mu(M_i) q(M_i, \root)   \\
        &= \sum_{i=1}^{d} \mu(\root) \left(\frac{k_i^{+}}{k_i^{-}}\right) \cdot k_i^{-} \tag{using \eqref{balance equation relationship} and that $q(M_i,0) = k_i^-$}\\
        &= \left( \sum_{i=1}^{d} k_i^{+} \right) \mu(\root). 
    \end{align*}
    Hence, $q(\root)\mu(\root) = \sum_{y \neq \root} \mu(y) q(y, \root)$.

    \item  We now consider states of the form $xM_j$, with $x \in \mT$ and $M_j \in \mathcal{M}$.  We  first note that \eqref{balance equation relationship} yields
    \begin{align*}
        q(x M_j)\mu(x M_j) = \left( k_j^{-}+\sum_{i=1}^{d} k_i^{+} \right) \mu(x) \left(\frac{k_j^{+}}{k_j^{-}}\right).
    \end{align*}
    Next, we have
    \begin{align*}
        \sum_{y \neq x M_j} \mu(y) q(y, x M_j)   &= \mu(x) q(x, x M_j) + \sum_{i=1}^{d} \mu(x M_j M_i) q(x M_j M_i, x M_j) \\
        &= \mu(x) k_j^{+} + \sum_{i=1}^{d} \mu(x) \left(\frac{k_j^{+}}{k_j^{-}}\right) \left(\frac{k_i^{+}}{k_i^{-}}\right) \cdot k_i^{-} \tag{using \eqref{balance equation relationship} twice}\\
        &= \mu(x) k_j^{+} + \mu(x) \left(\frac{k_j^{+}}{k_j^{-}}\right) \sum_{i=1}^{d} k_i^{+} \\
        &= \left( k_j^{-}+\sum_{i=1}^{d} k_i^{+} \right) \mu(x) \left(\frac{k_j^{+}}{k_j^{-}}\right).
    \end{align*}
    Therefore, $q(x M_j)\mu(x M_j) = \sum_{y \neq x M_j} \mu(y) q(y, x M_j)$. Since both sides match, the proposition has been proved.\qedhere
\end{itemize}
\end{proof}

Now we need to give the condition under which $\left\{\mu_x: x \in \mT \right\}$ forms a probability distribution. This requires the measure sums to 1. 
Note that the number of polymers of length $\ell$ that consist of $\beta_i$ monomers of type $M_i$ (so that $\beta_1 + \dots + \beta_d = \ell$) is precisely the multinomial coefficient $\binom{\ell}{\beta_1, \dots, \beta_d}$. This accounts for the number of distinct sequences (i.e., orderings) of monomers with those multiplicities. Therefore,
\begin{align*}
   \sum_{x \in \mT} \mu(x) &= \mu(\root) + \sum_{\ell=1}^{\infty} \sum_{\substack{\beta_1 + \cdots + \beta_d = \ell \\ \beta_i \geq 0}} \mu(\root) \binom{\ell}{\beta_1, \dots, \beta_d} \prod_{i=1}^d \left( \frac{k_i^{+}}{k_i^{-}} \right)^{\beta_i} \\
   &= \mu(\root) \left(1 + \sum_{\ell=1}^{\infty} \left( \sum_{i=1}^d \frac{k_i^{+}}{k_i^{-}} \right)^l \right),
\end{align*}
where the final equality follows from the multinomial theorem.

Hence, $\mu(0)$ can be chosen for $\sum_{x \in \mT} \mu(x)$ to equal one if and only if $\alpha = \sum_{i=1}^d \frac{k_i^+}{k_i^-} < 1$. 

Hence, if $\alpha = \sum_{i=1}^d \frac{k_i^{+}}{k_i^{-}} < 1$, then the process $X$ is positive recurrent.

Now consider the case where $\alpha = \sum_{i=1}^d \frac{k_i^{+}}{k_i^{-}} = 1$. The system still admits a unique stationary measure $\{\mu(x): x \in \mT\}$ given by~\eqref{balance equations} because stationary measures for irreducible recurrent continuous-time Markov chains are unique up to scalar multiples (see \cite[Theorem 3.5.2]{norris1998markov}). However, in the case $\alpha = 1$  this measure cannot be normalized to a probability distribution.  Hence, in this case,  the process $X$ cannot be positive recurrent, and so must be null recurrent, concluding the proof.
\end{proof}

\section{Limiting proportion of each monomer type}
\label{Limiting portion of each type of monomer}

We now turn to our second question. Throughout this section we assume that $\alpha = \sum_{i=1}^d \frac{k_i^{+}}{k_i^{-}} > 1$,
so that the process $X$ is transient and   $\lim_{t \to \infty} \left|X_t\right| = \infty$, almost surely \cite[Theorem 9.18]{woess2009denumerable}. For each $i \in \{1,\dots,d\}$ and any $t \ge 0$, we denote the number of occurrences of the monomer $M_i$ in the polymer $X(t)$ by $N_i^X(t)$.  Using the notation of the last section (in \eqref{balance equations}), we note $N_i^X(t) = \beta_i(X(t))$. 
The proportion of monomer $M_i$ at time $t$ is then
\begin{align}
\label{eq:98769877689}
    \sigma_i(t) :=\frac{N_i^X(t)}{\sum_{j=1}^d N_j^X(t)}= \frac{N_i^X(t)}{|X(t)|},
\end{align}
with each $\sigma_i(t)$ taken to be zero when $X(t) = \root$.

In this section, we prove the following. 

\begin{theorem}
    \label{thm:proportion}
    In the transient regime, i.e, when $\alpha = \sum_{i=1}^d \frac{k_i^{+}}{k_i^{-}} > 1$, for each $i\in \{1,\dots,d\}$ we have $\lim_{t\to \infty} \sigma_i(t) = \bar \sigma_i$, almost surely, where 
    \begin{align*}
        \bar \sigma_i = \frac{k_i^+}{m+k_i^-},
    \end{align*}
    with $m$ being the unique value satisfying $\sum_{r=1}^d \frac{k_r^+}{m+k_r^-} = 1$.
\end{theorem}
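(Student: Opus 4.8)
The plan is to reduce the claim to a strong law of large numbers for the sequence of monomers that become \emph{permanently frozen} in the growing polymer, and then to show that the still-fluctuating part of $X(t)$ near the tip is asymptotically negligible in length. First I would pass from continuous to discrete time. Writing $K(t)$ for the number of jumps of $X$ up to time $t$, we have $K(t)\to\infty$ almost surely (the rates are bounded, the process is non-explosive, and it is never absorbed), while $\beta_i(X(t))=\beta_i(Z_{K(t)})$ and $|X(t)|=|Z_{K(t)}|$. Hence it suffices to prove $\beta_i(Z_k)/|Z_k|\to\bar\sigma_i$ almost surely for the embedded DTMC $Z$. Since $\alpha>1$, the chain is transient and $|Z_k|\to\infty$ almost surely \cite[Thm.~9.18]{woess2009denumerable}, so for each level $n$ the last-visit index $\ell_n:=\sup\{k\ge 0:|Z_k|=n\}$ is finite and $\ell_n\uparrow\infty$. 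After step $\ell_{n-1}$ the chain never again has length $n-1$, so the first $n$ monomers of $Z_k$ are frozen for all $k>\ell_{n-1}$; let $J_n\in\{1,\dots,d\}$ be the type of the $n$th monomer of this limiting infinite polymer, and let $F_k:=\max\{n:\ell_{n-1}<k\}$ be the number of frozen monomers at step $k$, so that the frozen prefix of $Z_k$ is exactly $M_{J_1}\cdots M_{J_{F_k}}$.

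Next I would identify the law of $(J_n)$. For a tip state $xM_i$, let $\rho_i$ be the probability that $Z$ started there ever reaches $x$ (the tip eventually detaches); by the cone-type symmetry of Definition~\ref{cone type} this depends only on $i$. A first-excursion decomposition gives, with $S_i:=k_i^-+\sum_{r=1}^d k_r^+$,
\[
  \rho_i=\frac{k_i^-}{S_i}+\left(\sum_{j=1}^d\frac{k_j^+}{S_i}\,\rho_j\right)\rho_i .
\]
Setting $m:=\sum_{j=1}^d k_j^+(1-\rho_j)$ and using $S_i-\sum_j k_j^+\rho_j=k_i^-+m$, this solves to $\rho_i=k_i^-/(k_i^-+m)$, whence $1-\rho_i=m/(k_i^-+m)$. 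Substituting back into the definition of $m$ yields the self-consistency relation $\sum_{r=1}^d k_r^+/(k_r^-+m)=1$, which (since the left side decreases from $\alpha$ at $m=0$ to $0$ as $m\to\infty$) has a unique positive root exactly when $\alpha>1$; this is the $m$ of the statement. Finally, conditioning a tip-$M_i$ state on eventual escape (non-return to its predecessor), the probability that the next frozen monomer is $M_j$ is
\[
  \frac{k_j^+(1-\rho_j)}{\sum_{j'=1}^d k_{j'}^+(1-\rho_{j'})}
  =\frac{k_j^+/(k_j^-+m)}{\sum_{j'=1}^d k_{j'}^+/(k_{j'}^-+m)}
  =\frac{k_j^+}{k_j^-+m}=\bar\sigma_j ,
\]
which is independent of $i$. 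Thus $(J_n)$ is i.i.d.\ with law $\bar\sigma$. The subtle point is that the $\ell_n$ are \emph{not} stopping times, so this ``regeneration at the frozen frontier'' is not a direct consequence of the strong Markov property; I would justify it via the last-exit (Doob $h$-transform) description of the chain conditioned to converge to its boundary, which on a tree with finitely many cone types is again a Markov chain on the cone types --- exactly the framework of \cite[Ch.~9]{woess2009denumerable}.

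Granting this, the SLLN gives $\frac1n\sum_{m\le n}\mathbf 1\{J_m=i\}\to\bar\sigma_i$ almost surely, i.e.\ the frozen prefix of length $F_k$ has asymptotic composition $\bar\sigma_i$. It then remains to control the overshoot, i.e.\ to show $(|Z_k|-F_k)/|Z_k|\to 0$ almost surely. The freezing events are regeneration epochs, and between consecutive epochs the chain performs an excursion above the current frontier whose maximal height, by transience (positive drift on a tree of bounded degree), has exponentially small tails and in particular finite mean; a Borel--Cantelli argument then shows that the overshoot accrued in the $n$th cycle is $o(n)$, so $F_k/|Z_k|\to 1$. Combining the two facts,
\[
  \frac{\beta_i(Z_k)}{|Z_k|}=\frac{\beta_i\!\left(M_{J_1}\cdots M_{J_{F_k}}\right)+\big(|Z_k|-F_k\big)\cdot O(1)}{|Z_k|}\longrightarrow\bar\sigma_i ,
\]
which transfers back to $\sigma_i(t)$ through the time change $k=K(t)$ and proves the theorem.

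I expect the two genuine difficulties to be: (a) making the frozen-frontier regeneration rigorous despite the $\ell_n$ not being stopping times, which the $h$-transform / cone-type machinery of \cite[Ch.~9]{woess2009denumerable} is designed to handle; and (b) the overshoot estimate $(|Z_k|-F_k)/|Z_k|\to 0$, which needs quantitative tail bounds on excursion heights rather than a soft argument. Of these I anticipate (b), the overshoot control, to be the main obstacle to a fully rigorous write-up, and it is also the technical seed of the harder velocity analysis in Section~\ref{Asymptotic Growth Rate}.
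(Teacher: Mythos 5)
Your proposal is correct and follows essentially the same route as the paper: both pass to the embedded chain, use the last-exit (boundary-process) decomposition together with the cone-type machinery of \cite[Ch.~9]{woess2009denumerable} to identify the law of the limiting monomer sequence --- your $\rho_i$ are exactly the paper's return probabilities $F_i$, your first-excursion equation is the paper's system \eqref{Properties of F d-types}, and your observation that the escape law $\bar\sigma_j = k_j^+/(m+k_j^-)$ is independent of the current tip type is precisely the paper's finding that the cone-type transition matrix satisfies $V_{ij} = F_j k_j^+/k_j^-$ (identical rows), so its stationary/ergodic analysis collapses to your i.i.d.\ statement. The only substantive difference is your step (b): where you propose exponential tail bounds on excursion heights plus Borel--Cantelli to show the unfrozen overshoot is negligible, the paper instead bounds the overshoot by the number of \emph{steps} $e_{\boldsymbol{\hat k}(n)+1}-e_{\boldsymbol{\hat k}(n)}$ between consecutive last-exit times and cites \cite[p.~295]{woess2009denumerable} that this quantity is $o\bigl(N_i^W(\boldsymbol{\hat k}(n))\bigr)$ almost surely, which sidesteps the quantitative tail estimates you flag as the main obstacle.
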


\begin{remark}
Although we do not use this interpretation in the analysis, the constant $m$ admits a natural probabilistic meaning as an effective \emph{escape rate} from the root. In particular, one can show that
\[
m = \left(\sum_{r=1}^d k_r^+\right)
P_{\root}\left(\text{$X$ never returns to $\root$}\right).
\]
Thus, $m$ may be viewed as the total rate at which growth attempts are initiated at the root, multiplied by the probability that such an attempt leads to a trajectory that never returns.  Equivalently, $m$ may be interpreted as the effective drift of the process away from the root in the transient regime.
\end{remark}

To prove the theorem, it suffices to study the embedded discrete-time Markov chain $\{Z_n\}_{n \ge 0}$. Moreover, and without loss of generality, we will assume throughout this section that the process has an initial state given by the root; that is, $Z_0 = \root$ with probability one.

We begin by defining some of the key objects for the next two sections.  First, 
we define the $k$th \textit{level} of the graph $T$ to be the subset of the state space $\mT$ consisting of polymers with length $k$. For example, when $d = 2$, the second level is the set $\{M_1M_1,M_1M_2,M_2M_1,M_2M_2\}$. Next, the  random times $\{e_k\}_{k \ge 0}$ are defined to be the \textit{last} time the process $Z$ visits level $k$.  That is,
\begin{align*}
    e_k = \sup \left\{ n \geq 0 : \left|Z_n\right| = k \right\}.
\end{align*}
Note that because the process is assumed to be transient, we have $e_k < \infty$ for each $k$, with probability one, and that  for any $k \geq 0$, we have $|Z_{e_k}| = k$ and $|Z_n| > k$ for all $n > e_k$. Note also that the $e_k$ are \textit{not} stopping times.

We now construct the process $\{W_k\}_{k \ge 0}$, sometimes referred to as the \textit{boundary process} \cite{woess2009denumerable}. For each $k \ge 0$, we set
\begin{align}
\label{bbboundary Process}
    W_k = Z_{e_k},
\end{align}
which records the state visited at the last time the process $Z$ is at level $k$. 
 
It follows that $W_k$ is the polymer of length $k$ that forms the first $k$ monomers of the limiting infinite polymer.  In particular, note that $(W_0,W_1,W_2,\dots, W_k)$ converges, as $k \to \infty$, to an infinite length polymer, and that the fractional representation of each monomer in the $W$ process is the object of our interest in this section. For readers who wish to see a concrete example immediately, a visualization of the boundary process in the two-monomer case appears in Section \ref{Example: Copolymerization process involving two monomer types}.

The plan is the following.  According to Theorem \ref{thm:WisMC} below, the process $\{W_k\}_{k\ge 0}$ is itself a Markov chain.  Define the associated cone type of $W_k$ to be $U_k$.  That is,
\begin{align}
\label{conecone boundary}
    U_k = C(W_k).
\end{align}
The process $U_k$ is then a Markov chain on the finite state space $\{C_1,\dots, C_d\}$.  We will prove below that $U_k$ is irreducible.  Hence, it has a unique limiting stationary distribution.  Moreover, this  distribution  yields the desired limiting proportion of each monomer type.   Thus, our remaining goal is to characterize the limiting (stationary) distribution of the  process $U_k$.

Our first order of business is to characterize the transition probabilities for  $\{W_k\}_{k\ge 0}$. To that end, for any $x, y \in \mT$, define $ f^{(n)}(x,y) = 0$ and for $n \ge 1$,
\begin{align*}
    f^{(n)}(x,y) = P_x \left( Z_n = y, \ Z_m \neq y \text{ for } 0 < m < n \right),
\end{align*}
 which is the probability that the first time the process $Z$ enters state $y$ is at time $n$ (after $n$ jumps), given that the process $Z$ starts at state $x$.  We then define
\begin{align}
\begin{split}
    F(x,y)  &:= \sum_{n=0}^\infty f^{(n)}(x,y)= P_x(\text{the process enters state } y \text{ in finite time}).
\end{split}
\label{Sec4:F}
\end{align}
It is intuitively clear that for any monomer type $M_i \in \mathcal{M}$ and any $x \in \mT$, the value $F(x M_i, x)$
only depends on the cone type $C(x M_i)= C_i$.  
(For a reference to this fact, see Chapter 9, page 276 in \cite{woess2009denumerable}.) 
Hence, for each $M_i \in \mathcal{M}$ and any $x \in \mT$, we denote
\begin{align}
\label{eq:F_i_sec4}
    F_i := F(x M_i, x).
\end{align}
 Note  that each $F_i$ is strictly greater than zero (and, in fact, lower bounded by $\frac{k_i^-}{k_i^- + \sum_{r=1}^d k_r^+}$) and is also strictly less than one \cite[Lemma 9.98]{woess2009denumerable}.

We now state the following key result from \cite{woess2009denumerable}.

\begin{theorem}
\label{thm:WisMC}
  In the transient case, i.e., $\alpha > 1$,  the  process $\left\{W_k\right\}_{k \geq 1}$ is  a Markov chain. For $x \in \mT$ with $|x| = k$ and any $y \in \mT$ for which $x=y^-$, we have the following transition probabilities
  \begin{align*}
      P\left(W_k = y \mid W_{k-1} = x\right) = F(x, x^{-}) \cdot \frac{1 - F(y, x)}{1 - F(x, x^{-})} \cdot \frac{p(x, y)}{p(x, x^{-})},
  \end{align*}
  where the $p$ are given in and around \eqref{transition matrix P}.
\end{theorem}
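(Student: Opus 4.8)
The plan is to identify the finite-dimensional distributions of $\{W_k\}$ with hitting-type probabilities that depend only on the deepest coordinate, from which both the Markov property and the transition formula follow. Throughout, write $y^- = x$ for the (unique) parent relation; for a vertex $w \in \mT$, let $\mathcal{E}(w)$ denote the event that the walk $Z$ eventually remains in the subtree $\mT_w$ forever, and set $h(w) = P_\root(\mathcal{E}(w))$. I would first show that whenever $x_0 = \root, x_1, \dots, x_k$ is the descending path of ancestors of a vertex $x_k$ (so $x_{j-1} = x_j^-$), one has the event identity
\begin{align*}
\{W_0 = x_0, \dots, W_k = x_k\} = \mathcal{E}(x_k), \qquad \text{so that} \qquad P(W_0 = x_0, \dots, W_k = x_k) = h(x_k).
\end{align*}
The inclusion ``$\subseteq$'' holds because $W_k = x_k$ forces the walk to remain at levels exceeding $k$ after $e_k$, where it must enter and be trapped in the subtree of a child of $x_k$. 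For ``$\supseteq$'', transience guarantees $e_j < \infty$ for every $j$, and since $\mathcal{E}(x_k) \subseteq \mathcal{E}(x_j)$ while the unique level-$j$ vertex of $\mT_{x_j}$ is the ancestor $x_j$, settling in $\mT_{x_k}$ pins down $W_j = x_j$ for all $j \le k$. The essential point is that the right-hand side depends on the path only through its endpoint.

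Granting this, the Markov property is essentially free: for any admissible path and child $x_k$ of $x_{k-1}$,
\begin{align*}
P\!\left(W_k = x_k \mid W_0 = x_0, \dots, W_{k-1} = x_{k-1}\right) = \frac{h(x_k)}{h(x_{k-1})},
\end{align*}
which depends only on the pair $(x_{k-1}, x_k)$. To evaluate the ratio $h(y)/h(x)$ I would use two decompositions of $h$. Since the walk visits $x$ only finitely often, remaining in $\mT_x$ forever forces it to remain in $\mT_y$ for exactly one child $y$; disjointness over children gives $h(x) = \sum_{y:\, y^- = x} h(y)$. A last-exit decomposition at $x$ — summing over the deterministic candidate times $n$ of the final visit to $x$ and applying the ordinary Markov property at each such $n$ — gives, with the Green's function $G(\root, x) = \sum_{n \ge 0} P_\root(Z_n = x) < \infty$,
\begin{align*}
h(y) = G(\root, x)\, p(x, y)\,\bigl(1 - F(y, x)\bigr),
\end{align*}
since after the last step from $x$ into $y$ the walk never returns to $x$ (probability $1 - F(y,x)$) and is thereby trapped in $\mT_y$. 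The common factor $G(\root, x)$ cancels, yielding
\begin{align*}
P(W_k = y \mid W_{k-1} = x) = \frac{p(x,y)\,(1 - F(y,x))}{\sum_{y':\, (y')^- = x} p(x, y')\,(1 - F(y', x))}.
\end{align*}

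Finally, I would simplify the denominator. A first-step analysis of $F(x, x^-)$ — the walk reaches $x^-$ either directly (probability $p(x,x^-)$) or by first stepping to a child $y'$, returning to $x$, and then reaching $x^-$ — gives $F(x,x^-) = p(x,x^-) + \bigl(\sum_{y'} p(x,y')\,F(y', x)\bigr) F(x,x^-)$. Rearranging and using $\sum_{y'} p(x,y') = 1 - p(x,x^-)$ yields
\begin{align*}
\sum_{y':\, (y')^- = x} p(x,y')\,(1 - F(y',x)) = p(x, x^-)\,\frac{1 - F(x, x^-)}{F(x, x^-)},
\end{align*}
and substituting this into the denominator produces exactly the claimed transition probability. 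The main obstacle is the event identity and the last-exit decomposition in the first two paragraphs: because the last-passage times $e_k$ are \emph{not} stopping times, one cannot invoke the strong Markov property directly at $e_k$, and the argument must instead be organized around summation over fixed candidate times, combined with transience (finiteness of the $e_k$ and of $G(\root, x)$). Everything downstream is then routine algebra, and the resulting transition probabilities depend on $x, y$ only through the cone types $C(x), C(y)$, consistent with the cone-type framework of Section~\ref{Criterion for Positive Recurrence, Null Recurrence, and Transience}.
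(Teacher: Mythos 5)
Your proof is correct, and it is worth noting that the paper itself gives no proof of this theorem: it is quoted as a known result from Woess's book, and the only argument of this kind actually written out in the paper is the Appendix~\ref{AppendixProof1} proof of the bivariate generalization, Proposition~\ref{transition prob. d}. Your route shares with that appendix argument the one essential technical device — since the last-exit times $e_k$ are not stopping times, one decomposes over \emph{deterministic} candidate times and applies the ordinary Markov property there — but the packaging is genuinely different. The appendix works directly with the events $\{W_k = x,\, e_k = l\}$, partitions over $l$, and cites the escape identity $P_x\left(Z_r \in \mT_x \setminus \{x\}\ \forall r \ge 1\right) = p(x,x^-)\left(\tfrac{1}{F(x,x^-)} - 1\right)$ from Nagnibeda--Woess; for the bivariate statement it additionally needs a path-reversal (reversibility) argument. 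You instead (i) identify $\{W_1 = x_1, \dots, W_k = x_k\}$ with the settling event $\mathcal{E}(x_k)$, which makes the Markov property immediate because the joint law depends on the path only through its endpoint; (ii) obtain $h(y) = G(\root,x)\,p(x,y)\,(1-F(y,x))$ by a last-exit sum over fixed times; and (iii) re-derive the escape identity by first-step analysis of $F(x,x^-)$. I checked all three steps; the Green's function cancellation and your denominator identity yield exactly the stated formula. What your version buys is a self-contained, conceptually transparent proof of the univariate statement; what the paper's (Woess's) formulation buys is extendability to enriched chains such as $(W_k, e_k)$ and $(W_k, \chi_i(e_k))$, where the extra coordinate is not determined by the event $\mathcal{E}(x_k)$ and the time-indexed decomposition is genuinely needed.

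Two points deserve tightening, though neither is a gap. First, in the ``$\supseteq$'' direction of your event identity, the uniqueness of the level-$j$ vertex of $\mT_{x_j}$ is not by itself the argument: you should add that if the last visit to level $j$ were at some $v \ne x_j$, then the walk, being forced by settling to re-enter $\mT_{x_j}$, would have to pass through $x_j$ afterwards — a later visit to level $j$, contradicting maximality (equivalently: the walk settles in both $\mT_v$ and $\mT_{x_j}$, which are disjoint). Second, you implicitly use $1 - F(y,x) > 0$ and $G(\root,x) < \infty$, both of which hold in the transient regime (the paper invokes Lemma 9.98 of Woess for the former), to guarantee that every conditioning event has positive probability.
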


From this, we can immediately calculate the transition probabilities for $W_k$ in terms of the $F_i$ in \eqref{eq:F_i_sec4}. In particular, for the polymers $xM_i$ and $xM_iM_j$, with $x \in \mT$ and $|x|=k-1$,
\begin{align}
P\left(W_{k+1} = x M_i M_j \mid W_k = x M_i\right)
&= F_i \cdot \frac{1 - F_j}{1 - F_i} \cdot \frac{p(x M_i, x M_i M_j)}{p(x M_i, x)} \notag\\
&= F_i \cdot \frac{1 - F_j}{1 - F_i} \cdot \frac{\frac{k_j^{+}}{k_i^{-} +\sum_{r=1}^d k_r^{+} }}{\frac{k_i^{-}}{ k_i^{-} + \sum_{r=1}^d k_r^{+} }} \notag\\
&= F_i \cdot \frac{1 - F_j}{1 - F_i} \cdot \frac{k_j^{+}}{k_i^{-}}.\label{eq:7896675}
\end{align}
Note that each term is well defined because $F_i <1$  and that each term is also strictly positive.  
We then immediately conclude that the process $\{U_k\}_{k \ge 1}$ is irreducible and has the following transition probabilities
\begin{align*}
    P(U_{k+1} = C_j \mid U_k = C_i) = F_i \cdot \frac{1 - F_j}{1 - F_i} \cdot \frac{k_j^{+}}{k_i^{-}}.
\end{align*}

We can now give a $d \times d$ transition matrix $V = (V_{ij})_{1 \le i,j \le d}$ for the Markov chain $\{U_k\}_{k \ge 1}$: 
\begin{align}
\label{general_M_matrix}
\begin{split}
V_{ij} :=  P(U_{k+1} = C_j \mid U_k = C_i) 
= F_i \cdot \frac{1 - F_j}{1 - F_i} \cdot \frac{k_j^{+}}{k_i^{-}}>0.
\end{split}
\end{align}
This matrix $V$ indicates $\{U_k\}_{k \ge 1}$ is irreducible and positive recurrent (under our transient assumption). Let $\bar{\sigma} = (\bar{\sigma}_1, \dots, \bar{\sigma}_d)$ denote the stationary distribution of this Markov chain  $\{U_k\}_{k \ge 1}$. Then $\bar{\sigma}$ satisfies:
\begin{align}
    \bar{\sigma} V = \bar{\sigma}, \quad \sum_{i=1}^d \bar{\sigma}_i = 1.
    \label{stationary_general}
\end{align}

Thus, all that remains is to calculate the $F_i$ of \eqref{eq:F_i_sec4} and derive the stationary distribution for the process.

Before that, we give the following propositions for preparation.

\begin{proposition}
    The $F_i$ of \eqref{eq:F_i_sec4} satisfy the following system of $d$ equations,
    \begin{align}
    \label{Properties of F d-types}
        \frac{k_i^{-}}{\left( k_i^{-} + \sum_{r=1}^{d} k_r^{+}\right) - \sum_{r=1}^{d} k_r^{+} F_r} = F_i, \qquad \text{for each } i \in \{1, \dots, d\}.
    \end{align}
\end{proposition}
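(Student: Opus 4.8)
The plan is to derive the system \eqref{Properties of F d-types} by a first-step decomposition of the first-passage probability $F_i = F(xM_i, x)$, using the strong Markov property of the embedded chain $Z$. Fix any $x \in \mT$ and consider $Z$ started at $xM_i$. Since the graph is a tree, the state $xM_i$ sits at level $|x|+1$, its unique downward neighbor is $x$, and its upward neighbors are exactly the states $xM_iM_j$ for $j \in \{1,\dots,d\}$. Conditioning on the first jump of $Z$ out of $xM_i$ and using the transition probabilities in \eqref{C_i transition}, there are two cases: either $Z$ steps directly to $x$, which occurs with probability $a(C_i^-) = k_i^-/(k_i^- + \sum_r k_r^+)$ and immediately realizes the event $\{Z \text{ reaches } x\}$; or $Z$ steps up to some $xM_iM_j$, which occurs with probability $a(C_i,C_j) = k_j^+/(k_i^- + \sum_r k_r^+)$.

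In the second case I would argue that, because the state space is a tree, removing the vertex $xM_i$ disconnects the descendant $xM_iM_j$ from the ancestor $x$; hence every trajectory from $xM_iM_j$ down to $x$ must first pass through $xM_i$. By the strong Markov property applied at the first hitting time of $xM_i$, the probability of reaching $x$ from $xM_iM_j$ therefore factors as $F(xM_iM_j, xM_i)\cdot F(xM_i, x)$. By the cone-type symmetry recorded in \eqref{eq:F_i_sec4}, since $(xM_iM_j)^- = xM_i$ and $C(xM_iM_j) = C_j$, the first factor equals $F_j$, while the second factor is $F_i$ by definition. Combining the two cases yields the one-step relation
\begin{align*}
    F_i = a(C_i^-) + \sum_{j=1}^d a(C_i,C_j)\, F_j F_i
    = \frac{k_i^-}{k_i^- + \sum_r k_r^+} + \frac{F_i}{k_i^- + \sum_r k_r^+}\sum_{j=1}^d k_j^+ F_j.
\end{align*}

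Finally, I would clear denominators by multiplying through by $k_i^- + \sum_r k_r^+$, collect the terms involving $F_i$ to obtain $F_i\left((k_i^- + \sum_r k_r^+) - \sum_{j} k_j^+ F_j\right) = k_i^-$, and divide to recover exactly \eqref{Properties of F d-types}. The division is legitimate provided the bracketed quantity is nonzero: since each $F_j \in (0,1)$ by \cite[Lemma 9.98]{woess2009denumerable}, we have $\sum_j k_j^+ F_j < \sum_r k_r^+ < k_i^- + \sum_r k_r^+$, so the denominator is strictly positive (indeed exceeds $k_i^-$), and the manipulation is valid.

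The only genuinely delicate point in the argument is the multiplicative factorization $F(xM_iM_j, x) = F_j F_i$ used in the second case; everything else is algebraic bookkeeping. I expect that step to be the main obstacle to state carefully, since it rests on two ingredients that must be invoked cleanly: the tree structure, which forces any walk from a descendant to an ancestor through every intermediate vertex, and the strong Markov property at the first passage to $xM_i$, which is what makes the two first-passage events independent and hence multiplicative.
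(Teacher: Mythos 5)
Your proposal is correct, and it is essentially the same argument as the one the paper relies on: the paper gives no proof of this proposition itself but defers to the derivation ``in and around Equation 9.76'' of Woess, and that derivation is precisely your first-step decomposition of $F_i$ at the vertex $xM_i$, with the identity $F(xM_iM_j,x)=F_j F_i$ coming from the tree structure plus the strong Markov property. The factorization you single out as the delicate step is indeed the crux, but it is on solid ground --- it is the same cut-point relation the paper itself records later as Lemma~\ref{G properties} (citing Nagnibeda--Woess), so your proof is complete as written.
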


The proof of the above proposition can be found in and around \cite[Equation 9.76]{woess2009denumerable}.

\begin{proposition}
    The solution for \eqref{Properties of F d-types} exists and is equal to
    \begin{align*}
        F_i = \frac{k_i^{-}}{m+k_i^{-}}, \quad i \in \{1,\cdots,d\}
    \end{align*}
    where $m$ is the unique solution satisfying
    \begin{align*}
        \sum_{r=1}^{d} \frac{k_r^+}{m+k_r^{-}} =1.
    \end{align*}
\label{F_i solutions}
\end{proposition}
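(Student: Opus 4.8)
The plan is to collapse the coupled system \eqref{Properties of F d-types} into a single scalar equation by exploiting the fact that the sum $\sum_{r=1}^d k_r^+ F_r$ is common to all $d$ equations. First I would introduce the auxiliary scalar
\begin{align*}
    m := \sum_{r=1}^d k_r^+ - \sum_{r=1}^d k_r^+ F_r = \sum_{r=1}^d k_r^+\left(1 - F_r\right),
\end{align*}
which is exactly the quantity appearing in the denominator of \eqref{Properties of F d-types}, since $\left(k_i^- + \sum_{r=1}^d k_r^+\right) - \sum_{r=1}^d k_r^+ F_r = k_i^- + m$. With this substitution the $i$th equation reads $F_i (k_i^- + m) = k_i^-$, so that every $F_i$ is expressed explicitly as a function of the single unknown $m$:
\begin{align*}
    F_i = \frac{k_i^-}{m + k_i^-}, \qquad i \in \{1, \dots, d\}.
\end{align*}

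Next I would close the loop by substituting these expressions back into the definition of $m$. Using $1 - F_r = m/(m + k_r^-)$, the definition of $m$ becomes $m = m \sum_{r=1}^d k_r^+/(m + k_r^-)$. Here the key structural input is the strict inequality $F_i < 1$ established just before the proposition (via \cite[Lemma 9.98]{woess2009denumerable}): it guarantees that $m = \sum_{r=1}^d k_r^+(1-F_r) > 0$, so we may divide by $m$ to obtain the scalar self-consistency equation
\begin{align*}
    \sum_{r=1}^d \frac{k_r^+}{m + k_r^-} = 1.
\end{align*}
I would stress that this division is genuinely needed: the purely algebraic system \eqref{Properties of F d-types} also admits the spurious solution $F_i \equiv 1$ (corresponding to $m = 0$), and it is precisely the bound $F_i < 1$ that selects the correct branch.

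Finally I would establish existence and uniqueness of $m$. Define $g(m) := \sum_{r=1}^d k_r^+/(m + k_r^-)$ on $[0,\infty)$. Each summand is continuous and strictly decreasing in $m$, hence so is $g$; moreover $g(0) = \sum_{r=1}^d k_r^+/k_r^- = \alpha > 1$ by the transience assumption, while $g(m) \to 0$ as $m \to \infty$. By the intermediate value theorem together with strict monotonicity, there is a unique $m > 0$ with $g(m) = 1$. Reversing the algebra, the values $F_i = k_i^-/(m + k_i^-)$ then solve \eqref{Properties of F d-types}; and since any admissible solution (one with $F_i < 1$) yields, via the steps above, this same unique $m$, the solution is unique. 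I expect the main obstacle to be conceptual rather than computational: one must correctly isolate the common quantity $m$, recognize and discard the spurious all-ones solution, and invoke $\alpha > 1$ at exactly the point where it forces $g(0) > 1$ — the transience hypothesis is precisely what guarantees that the relevant root $m$ exists and is strictly positive.
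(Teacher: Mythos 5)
Your proposal is correct and follows essentially the same route as the paper's proof: define $m=\sum_{r=1}^d k_r^+(1-F_r)$, solve each equation for $F_i=k_i^-/(m+k_i^-)$, substitute back and divide by $m>0$ (justified by $F_r<1$) to obtain the scalar equation, and conclude uniqueness from the strict monotonicity of $g$ together with $g(0)=\alpha>1$. Your explicit remarks about discarding the spurious solution $F_i\equiv 1$ and verifying existence by reversing the algebra are welcome refinements, but they do not change the argument in any essential way.
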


\begin{remark}
    When the number of monomer types satisfies $d<5$, this equation can be solved analytically by reducing it to a polynomial of degree at most four. However, for $d \geq 5$, the equation is not in general solvable in radicals due to the Abel-Ruffini theorem.  However, in that case the value of $m$ can be computed numerically.
\end{remark}

\begin{proof}[Proof of Proposition \ref{F_i solutions}]
    Manipulating \eqref{Properties of F d-types} shows that for any $i\in \{1,\dots,d\}$,
    \begin{align}
    \label{SABABABAAB}
        &\frac{k_i^{-}}{F_i}-k_i^{-}= \sum_{r=1}^{d} k_r^{+}  - \sum_{r=1}^{d} k_r^{+} F_r.
    \end{align}
    Denote
    \begin{align}
    \label{m def}
        m=\sum_{r=1}^{d} k_r^{+}  - \sum_{r=1}^{d} k_r^{+} F_r = \sum_{r = 1}^d k_r^+(1-F_r),
    \end{align}
    and note that $m>0$ because each $F_r\in (0,1)$.
    Combining \eqref{SABABABAAB} and \eqref{m def} yields
    \begin{align}
    \label{F_i and m}
        &\frac{k_i^{-}}{F_i}-k_i^{-}= m \quad \implies \quad F_i = \frac{k_i^{-}}{m+k_i^{-}}.
    \end{align}
    Plugging this back into \eqref{m def} yields
    \begin{align*}
        &m=\sum_{r=1}^{d} k_r^{+}  - \sum_{r=1}^{d} k_r^{+} F_r = \sum_{r=1}^{d} k_r^{+}  - \sum_{r=1}^{d} k_r^{+} \left(\frac{k_r^{-}}{m+k_r^{-}}\right) = \sum_{r=1}^{d} k_r^{+} \frac{m}{m+k_r^{-}}= m \sum_{r=1}^{d} k_r^{+} \frac{1}{m+k_r^{-}}.
    \end{align*}
    Since $m>0$, we may divide by $m$ and conclude
    \begin{align*}
        \sum_{r=1}^{d} \frac{k_r^+}{m+k_r^{-}} =1.
    \end{align*}
    Note that the value of $m$ satisfying the above is unique because the function $g(m) = \sum_{r=1}^{d} \frac{k_r^+}{m+k_r^{-}}$ is monotonically decreasing in $m$, $g(0)=\sum_{r=1}^{d} \frac{k_r^+}{k_r^{-}}>1$, and approaches zero as $m \to \infty$.
\end{proof}

We can now give the stationary distribution for the process $U$.

\begin{proposition}
The stationary distribution for the process $\{U_k\}_{k \ge 1}$ is 
    \begin{align}
    \label{sigma and F relationship}
    \begin{split}
        \bar{\sigma}&= (\bar \sigma_1, \dots, \bar \sigma_d)=
        \left(\frac{k_1^{+}}{m+k_1^{-}},\frac{k_2^{+}}{m+k_2^{-}},\cdots,\frac{k_d^{+}}{m+k_d^{-}}\right),
    \end{split}
    \end{align}
    where $m$ is the unique solution to
    \begin{align*}
        \sum_{r=1}^{d} \frac{k_r^+}{m+k_r^{-}} =1.
    \end{align*}
\end{proposition}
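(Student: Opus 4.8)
The plan is to substitute the closed form $F_i = \frac{k_i^-}{m+k_i^-}$ from Proposition \ref{F_i solutions} into the transition matrix $V$ of \eqref{general_M_matrix} and exploit the resulting algebraic collapse. First I would record the two consequences $1 - F_i = \frac{m}{m+k_i^-}$ and hence $\frac{F_i}{1-F_i} = \frac{k_i^-}{m}$. Plugging these into $V_{ij} = F_i \cdot \frac{1-F_j}{1-F_i} \cdot \frac{k_j^+}{k_i^-}$, the factors of $k_i^-$ and of $m$ cancel, leaving
\[
V_{ij} = \frac{k_j^+}{m+k_j^-},
\]
which depends only on the column index $j$. In other words, every row of $V$ is the identical vector $\bar\sigma = \left(\frac{k_1^+}{m+k_1^-}, \dots, \frac{k_d^+}{m+k_d^-}\right)$.

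The remaining steps are then essentially immediate. I would first confirm that $V$ is genuinely stochastic: each row sum equals $\sum_{j=1}^d \frac{k_j^+}{m+k_j^-}$, which is exactly $1$ by the defining equation for $m$ in Proposition \ref{F_i solutions}; this simultaneously verifies that $\sum_i \bar\sigma_i = 1$, so $\bar\sigma$ is a bona fide probability vector. Next, for any matrix all of whose rows equal a fixed probability vector $\bar\sigma$, that vector is automatically a left fixed point: computing componentwise, $(\bar\sigma V)_j = \sum_{i=1}^d \bar\sigma_i V_{ij} = \bar\sigma_j \sum_{i=1}^d \bar\sigma_i = \bar\sigma_j$, using $\sum_i \bar\sigma_i = 1$. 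Hence $\bar\sigma V = \bar\sigma$, which is precisely the stationarity condition \eqref{stationary_general}.

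Finally, uniqueness is inherited from the structural facts already established: since all entries $V_{ij}$ are strictly positive (as noted in \eqref{general_M_matrix}), the chain $\{U_k\}$ is irreducible and positive recurrent on the finite state space $\{C_1,\dots,C_d\}$, so its stationary distribution is unique. Therefore $\bar\sigma$ as displayed is indeed the stationary distribution, completing the proof.

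There is no genuine obstacle here; the only point of technique is to resist attacking the eigenvector problem $\bar\sigma V = \bar\sigma$ head-on and instead notice that substituting the explicit $F_i$ turns $V$ into a rank-one matrix with identical rows, after which stationarity requires no computation beyond the normalization $\sum_i \bar\sigma_i = 1$. The one step warranting care is checking that the row sums collapse to $1$ through the defining relation for $m$, since this is what certifies both that $V$ is stochastic and that $\bar\sigma$ is correctly normalized.
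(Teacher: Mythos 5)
Your proposal is correct and takes essentially the same approach as the paper: both arguments hinge on showing that $V_{ij}$ depends only on the column index $j$, so that $V$ has identical rows, and then identifying that common row (normalized via the defining equation for $m$) as the unique stationary distribution of the irreducible finite-state chain. The only cosmetic difference is ordering — you substitute the closed form $F_i = \frac{k_i^-}{m+k_i^-}$ at the outset to get $V_{ij} = \frac{k_j^+}{m+k_j^-}$ directly, whereas the paper first derives $V_{ij} = F_j \frac{k_j^+}{k_j^-}$ symbolically from the relation $\frac{k_i^-(1-F_i)}{F_i} = \frac{k_j^-(1-F_j)}{F_j}$ and only invokes the explicit formula for $F_i$ in the final step.
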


\begin{proof}
    From \eqref{SABABABAAB},
    \begin{align*}
        &\frac{k_i^-(1-F_i)}{F_i} =
        \sum_{r=1}^{d} k_r^{+}  - \sum_{r=1}^{d} k_r^{+} F_r.
    \end{align*}
    Note that the right-hand side of the above does not depend upon $i$.  Thus, for any $i,j\in\{1,\dots,d\}$, we have $\frac{k_i^-(1-F_i)}{F_i} = \frac{k_j^-(1-F_j)}{F_j}$. Hence,
    \begin{align*}
       \frac{1-F_j}{1-F_i}=\frac{k_i^- F_j}{k_j^- F_i}. 
    \end{align*}
    Plugging this into \eqref{general_M_matrix} yields
    \begin{align*}
        V_{ij} := F_i \cdot \frac{1 - F_j}{1 - F_i} \cdot \frac{k_j^{+}}{k_i^{-}}= F_i \cdot \frac{k_i^- F_j}{k_j^- F_i} \cdot \frac{k_j^{+}}{k_i^{-}}= F_j \frac{k_j^+}{k_j^-}.
    \end{align*}
    Since $V$ is a transition matrix,
    \begin{align}
    \label{vector 1}
        \sum_{j=1}^{d} F_j \frac{k_j^+}{k_j^-}=1,
    \end{align}
    and so for $i \in \{1,2,\cdots,d\}$, 
    \begin{align}
    \label{vector 2}
        F_i \frac{k_i^+}{k_i^-}= F_i \frac{k_i^+}{k_i^-} \left(\sum_{j=1}^{d} F_j \frac{k_j^+}{k_j^-}\right) = \sum_{j=1}^{d} F_i \frac{k_i^+}{k_i^-} F_j \frac{k_j^+}{k_j^-} = \sum_{j=1}^{d} V_{ji}  \left(F_j \frac{k_j^+}{k_j^-}\right).
     \end{align}
    Hence, according to \eqref{vector 1} and \eqref{vector 2} and Proposition \ref{F_i solutions}, the stationary distribution is
    \begin{align}
    \label{stationary dist}
        \bar{\sigma}_i=F_i \frac{k_i^{+}}{k_i^{-}}=\frac{k_i^{+}}{m+k_i^{-}}, \quad  i \in \{1,2,\cdots,d\},
    \end{align}
    where $m$ is the unique solution given by $\sum_{r=1}^{d} \frac{k_r^+}{m+k_r^{-}} =1.$
\end{proof}

At this point, we have determined the stationary distribution of the chain $\{U_k\}_{k\ge 1}$, which describes the limiting frequency of cone types along the boundary process $W_k$. Intuitively, this already suggests that the limiting proportion of each monomer type in the polymer should be given by \eqref{sigma and F relationship}. However, the connection is not yet completely rigorous: the limiting frequencies of cone types in the boundary process must be related back to the original proportion $\sigma_i(t)$ of \eqref{eq:98769877689}  for the process $X$. 
Specifically, if we denote the number of occurrences of the monomer $M_i$ in the polymer $W_k$ by $N_i^W(k)$, we now know that from ergodic theorem, almost surely,
\begin{align*}
    \lim_{k\to\infty} \frac{N_i^W(k)}{|W_k|}=\lim _{n \rightarrow \infty} \frac{1}{n} \sum_{k=1}^n \mathbf{1}_{C_i}\left(C\left(W_k\right)\right)=\bar{\sigma}_i,
\end{align*}
all that remains is to show 
\begin{align*}
    \lim_{t\to\infty} \sigma_i(t)
= \lim_{k\to\infty} \frac{N_i^W(k)}{|W_k|},
\end{align*}
almost surely.  This requires carefully embedding the continuous-time process into the discrete $W$ process. The remainder of the proof is devoted to establishing this connection.

\begin{proof}[Proof of Theorem \ref{thm:proportion}]
 Define
\begin{align*}
    J_t = \max\{n \in \mathbb{N} \mid \tau_n \le t\},
\end{align*}
to be the number of jumps of $X$ up to time $t$. Since $X_t = Z_{J_t}$,
\begin{align*}
     \lim_{t \to \infty} \sigma_i(t)
    =\lim_{t \to \infty} \frac{N_i^X(t)}{|X(t)|}
    = \lim_{t \to \infty} \frac{N_i^Z(J_t)}{|Z_{J_t}|},
\end{align*}
where the number of occurrences of the monomer $M_i$ in the polymer $Z_n$ by $N_i^Z(n)$.

Because $J_t \to \infty$ a.s. \cite{norris1998markov}, it follows that 
\begin{align}
    \lim_{t \to \infty} \sigma_i(t)
    = \lim_{n \to \infty} \frac{N_i^Z(n)}{|Z_n|}, \quad \text{with probability 1}.
    \label{eq:step1}
\end{align}
We need to embed this limit onto $W$. 
As above, let $e_k$ be the last time $Z$ visits level $k$ and define
\begin{align*}
    \boldsymbol{\hat k}(n) = \max\{k : e_k \le n\},
\end{align*}
giving the length of the boundary process at time $n$.
Then $\boldsymbol{\hat k}(n) \to \infty$ as $n\to \infty$ a.s. \cite[Page 295]{woess2009denumerable}, and so
\begin{align}
    \lim_{n \to \infty} \frac{N_i^W(\boldsymbol{\hat k}(n))}{|W_{\boldsymbol{\hat k}(n)}|}
    = \lim_{k \to \infty} \frac{N_i^W(k)}{|W_k|}, \quad \text{ with probability 1}.
    \label{hddjchdjcdf}
\end{align}
Combining \eqref{eq:step1} and \eqref{hddjchdjcdf}, we now simply need to show the following holds almost surely,
\[
\lim_{n \to \infty} \frac{N_i^W(\boldsymbol{\hat k}(n))}{|W_{\boldsymbol{\hat k}(n)}|} = \lim_{n \to \infty} \frac{N_i^Z(n)}{|Z_n|}.
\]
To that end, we decompose the right hand side (where we assume $n$ is large enough so that none of the denominators are zero),
\begin{align}
    \frac{N_i^Z(n)}{|Z_n|}
    = \frac{N_i^Z(n)}{N_i^W(\boldsymbol{\hat k}(n))}
       \cdot \frac{N_i^W(\boldsymbol{\hat k}(n))}{|W_{\boldsymbol{\hat k}(n)}|}
       \cdot \frac{|W_{\boldsymbol{\hat k}(n)}|}{|Z_n|}.
       \label{fhfjcdc}
\end{align}
We will show that the first and third ratios limit to 1, almost surely, in which case we are done.

We tackle the first ratio.  First note that $Z_n \in \mT_{W_{\boldsymbol{\hat k}(n)}}$ (i.e., the first $\hat{k}(n)$ monomers of $Z_n$ coincide with $W_{\hat{k}(n)}$).  Hence, for each choice of $i$ and $n$,
\[
1 \le \frac{N_i^Z(n)}{N_i^W(\boldsymbol{\hat k}(n))}.
\]
We also have an upper bound,
\begin{align*}
\frac{N_i^Z(n)}{N_i^W(\boldsymbol{\hat k}(n))}
    &= \frac{N_i^Z(n) - N_i^W(\boldsymbol{\hat k}(n)) + N_i^W(\boldsymbol{\hat k}(n))}{N_i^W(\boldsymbol{\hat k}(n))}\\
    &\le \frac{|Z(n)| - |W(\boldsymbol{\hat k}(n))| + N_i^W(\boldsymbol{\hat k}(n))}{N_i^W(\boldsymbol{\hat k}(n))} \tag{since $Z_n \in \mT_{W_{\boldsymbol{\hat k}(n)}}$}\\
    &\le \frac{ e_{\boldsymbol{\hat k}(n)+1} - e_{\boldsymbol{\hat k}(n)} + N_i^W(\boldsymbol{\hat k}(n))}{N_i^W(\boldsymbol{\hat k}(n))} \tag{since $e_{\boldsymbol{\hat k}(n)} \leq n \leq e_{\boldsymbol{\hat k}(n)+1}$}\\
    &= 1 + \frac{e_{\boldsymbol{\hat k}(n)+1} - e_{\boldsymbol{\hat k}(n)}}{N_i^W(\boldsymbol{\hat k}(n))}.
\end{align*}
By \cite[Page 295]{woess2009denumerable}, 
\[
\lim_{n \to \infty} \frac{e_{\boldsymbol{\hat k}(n)+1} - e_{\boldsymbol{\hat k}(n)}}{N_i^W(\boldsymbol{\hat k}(n))}=0, 
\]
almost surely, and so the first ratio is handled.

Turning to the third ratio of \eqref{fhfjcdc}, by similar arguments we have
\begin{align*}
    1 \le \frac{|Z_n|}{|W_{k(n)}|}
    \le 1 + \frac{e_{\boldsymbol{\hat k}(n)+1} - e_{\boldsymbol{\hat k}(n)}}{|W_{k(n)}|},
\end{align*}
and by \cite[Page 295]{woess2009denumerable}, the second term $\frac{e_{\boldsymbol{\hat k}(n)+1} - e_{\boldsymbol{\hat k}(n)}}{|W_{k(n)}|}$ tends to $0$ almost surely. This completes the proof.
\end{proof}

\section{Asymptotic growth rate}
\label{Asymptotic Growth Rate}

In this section, we are interested in the asymptotic growth rate of the polymer in the transient regime. Specifically, we ask whether there exists a constant $v \in (0, \infty)$ such that
\begin{align*}
    \lim_{t \to \infty} \frac{|X(t)|}{t} = v, \quad \text{almost surely},
\end{align*}
and we want to characterize the value of $v$.
We will prove the following.
\begin{theorem}
\label{thm:velocity}
Let $\bar{\sigma}_r$ denote the limiting proportion of monomers of type $M_r$, as given in Theorem~\ref{thm:proportion}. Then, in the transient regime, i.e, when $\alpha = \sum_{i=1}^d \frac{k_i^{+}}{k_i^{-}} > 1$, the process admits a deterministic asymptotic growth velocity $v \in (0,\infty)$ given by 
\begin{align*}
    v := \lim_{t \to \infty} \frac{|X(t)|}{t} 
    = \sum_{r=1}^{d} k_r^{+} - \sum_{r=1}^{d} k_r^{-} \,\bar{\sigma}_r,
    \quad \text{almost surely},
\end{align*}
where $\{k_r^{+}, k_r^{-}\}_{r=1}^d$ are the attachment and detachment rates of the respective monomer types.
\end{theorem}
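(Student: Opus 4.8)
The plan is to decompose the polymer length into its attachment and detachment counts and to exploit a structural simplification special to this model. Write $|X(t)| = N^{+}(t) - N^{-}(t)$, where $N^{+}(t)$ and $N^{-}(t)$ are the total numbers of attachment and detachment events up to time $t$, and, for each $r$, let $N_r^{+}(t),N_r^{-}(t)$ count attachments and detachments of type $M_r$ specifically. The key observation is that, from \emph{every} state (root or not), the rate of attaching a monomer of type $M_r$ is exactly $k_r^{+}$, independent of the current state. Equivalently, the counting process $N_r^{+}$ has the deterministic compensator $k_r^{+}t$, so it is a Poisson process of rate $k_r^{+}$; hence $N_r^{+}(t)/t \to k_r^{+}$ almost surely and $N^{+}(t)/t \to \sum_r k_r^{+}$. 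This disposes of the ``up'' part of the dynamics cleanly and is the one place where the constant-rate attachment structure pays off immediately.

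The detachment counts are where the real difficulty lies, since the detachment rate at time $s$ equals $k^{-}_{r}$ only while the terminal monomer is of type $M_r$, and the terminal-monomer process is not itself Markov (a detachment exposes a previously hidden monomer). My first target is therefore the existence of the almost-sure limit
\[
\pi_r := \lim_{t\to\infty}\frac1t\int_0^t \mathbf 1\{\text{tip of }X(s)=M_r\}\,ds,
\]
the long-run fraction of time the tip is of type $M_r$. Granting existence, the Dynkin/compensator decomposition of $N_r^{-}$ (compensator $k_r^{-}\int_0^t \mathbf 1\{\text{tip}=M_r\}\,ds$, with a martingale remainder whose predictable quadratic variation grows at most linearly in $t$, hence $o(t)$ by the martingale strong law) yields $N_r^{-}(t)/t \to k_r^{-}\pi_r$, and therefore $|X(t)|/t \to v := \sum_r k_r^{+} - \sum_r k_r^{-}\pi_r$.

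I expect the establishment of the limits $\pi_r$ (equivalently, the mere existence of a deterministic velocity) to be the main obstacle, and this is precisely where the boundary process and cone-type machinery of Section~\ref{Limiting portion of each type of monomer} must be brought to bear. The plan is to reduce the continuous-time occupation average to the discrete boundary chain: using that $|X(t)|$ is asymptotic to the boundary level $\hat k(J_t)$ (as already shown in the proof of Theorem~\ref{thm:proportion}), existence of $v$ becomes a law of large numbers for the real-time increments $\tau_{e_k}-\tau_{e_{k-1}}$ accumulated per boundary level, driven by the ergodic cone-type chain $\{U_k\}$ with stationary law $\bar\sigma$. The genuine technical difficulty, flagged in the introduction, is that the $e_k$ are \emph{not} stopping times, so these increments are neither independent nor adapted in the naive sense, and the excursions above a given level can be long; controlling their cumulative continuous-time duration requires the same squeezing estimates (bounding the error by $e_{\hat k(n)+1}-e_{\hat k(n)}$ and invoking \cite[Page 295]{woess2009denumerable}) used in the proof of Theorem~\ref{thm:proportion}, now applied to holding-time sums rather than monomer counts.

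Finally, once existence is in hand, the \emph{value} of $v$ follows from an algebraic closure that sidesteps any explicit evaluation of the $\pi_r$. Combining $N_r^{+}(t)/t\to k_r^{+}$ with the compensator limit gives $N_r^{X}(t)/t \to k_r^{+}-k_r^{-}\pi_r$, while Theorem~\ref{thm:proportion} gives $N_r^{X}(t)/|X(t)|\to\bar\sigma_r$; dividing yields the identity $v\,\bar\sigma_r = k_r^{+}-k_r^{-}\pi_r$ for every $r$. Solving for $\pi_r$ and using $\sum_r \pi_r = 1$ (valid because transience forces the time-fraction spent at the root to vanish) produces the single scalar equation $\sum_r (k_r^{+}-v\bar\sigma_r)/k_r^{-}=1$. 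A short manipulation using $\sum_r \bar\sigma_r = \sum_r k_r^{+}/(m+k_r^{-})=1$ from Theorem~\ref{thm:proportion} shows its unique solution is $v=m$, and the same identity rewrites $m = \sum_r k_r^{+}-\sum_r k_r^{-}\bar\sigma_r$, which is the claimed formula. The attractive feature of this route is that the closure $\sum_r\pi_r=1$ supplies exactly the one extra relation needed, so that the already-known $\bar\sigma$ together with the constant-rate attachment structure pins down $v$ without ever computing the time-occupation fractions by hand.
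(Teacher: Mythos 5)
Your overall architecture is sound, and it differs from the paper's proof in two genuinely interesting ways. First, your decomposition $|X(t)| = N^{+}(t) - N^{-}(t)$, with the observation that each $N_r^{+}$ is a rate-$k_r^{+}$ Poisson process because attachment rates are state-independent, cleanly disposes of the ``up'' dynamics; the paper never exploits this, working instead with $t/|X(t)| \approx H^{J_t}/|Z_{J_t}|$ and decomposing elapsed time into holding times at each cone type. Second, and more substantially, your algebraic closure for the \emph{value} of $v$ --- dividing $N_r^X(t)/t \to k_r^{+} - k_r^{-}\pi_r$ by $N_r^X(t)/|X(t)| \to \bar\sigma_r$ to get $v\,\bar\sigma_r = k_r^{+} - k_r^{-}\pi_r$, then closing with $\sum_r \pi_r = 1$ --- bypasses the paper's explicit computations: the paper needs both the exact value of $\lim_n \chi_i(n)/n$ (obtained through reversibility and the Green-function ratios \eqref{ratio_d_types}, Proposition \ref{prop:ni_ratio_general}) and the exact discrete speed $\bar v$ cited from Woess (Lemma \ref{thm:velocity_discrete}), whereas your route needs only the \emph{existence} of the relevant limits. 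Your algebra checks out: the equation $\sum_r (k_r^{+} - v\bar\sigma_r)/k_r^{-} = 1$ is linear in $v$ with unique solution $v = m$, and $m = \sum_r k_r^{+} - \sum_r k_r^{-}\bar\sigma_r$ follows from $\bar\sigma_r = k_r^{+}/(m+k_r^{-})$ and $\sum_r k_r^{+}/(m+k_r^{-}) = 1$. A pleasant byproduct is that your route actually proves $\pi_r = \bar\sigma_r$, rigorously justifying the identification of time-occupation fractions with boundary-process fractions that the paper explicitly flags as tempting but not immediate.

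The gap is that the one step you defer --- existence of the $\pi_r$ (equivalently, existence of $v$) --- is precisely where the paper's real work lives, and your sketch does not reduce its difficulty. A law of large numbers for the real-time increments $\tau_{e_k} - \tau_{e_{k-1}}$ ``driven by the ergodic cone-type chain'' requires showing that the pair (cone type, per-level increment) is a positive recurrent Markov chain with finite-mean stationary increments; because the $e_k$ are not stopping times, this is exactly the content of the paper's Proposition \ref{transition prob. d}, Proposition \ref{prop:lkdjafj}, and Appendix \ref{AppendixProof1}, which are carried out there for the cone-type counts $\chi_i(e_k)$ and would have to be redone for continuous-time durations. (Alternatively, you could derive existence of $\pi_r$ directly from the paper's Lemma \ref{lemma:89767685} combined with the holding-time law of large numbers \eqref{frfurfbf}, since $\pi_i = \lim_n (H^n)_i/H^n$ and the root contributes negligibly.) So your proposal should be read as a genuine and worthwhile simplification of the ``identification of the constant'' half of the theorem, wrapped around the same hard ergodic core, which in your write-up remains unproved.
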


Intuitively, the polymer’s growth velocity should reflect the net rate of monomer addition, weighted by how often the process occupies states ending in each monomer type. If we let
\[
A_r(t) \;=\; \frac{1}{t}\int_0^t \mathbf{1}_{\{\text{terminal monomer of } X(s) \text{ is } M_r\}}\,ds,
\]
then a natural heuristic is
\[
v \;=\; \sum_{r=1}^d k_r^+ \;-\; \sum_{r=1}^d k_r^- \, \lim_{t\to\infty} A_r(t),
\]
interpreting $\lim_{t\to\infty} A_r(t)$ as the long-time fraction of time that $X$ spends at polymers whose terminal monomer is $M_r$.

The cone-type and boundary-process analysis in Section~\ref{Limiting portion of each type of monomer} identifies $\bar{\sigma}_r$ as the limiting fraction of \emph{growth steps} at which the terminal monomer is $M_r$. It is therefore tempting to set $\lim_{t\to\infty} A_r(t) = \bar{\sigma}_r$ and conclude the formula for $v$ directly. However, this identification is not immediate: the boundary process records only the sequence of terminal monomers at successive growth events and ignores the random excursions of the continuous-time process $X$ between these events. In particular, the growth events do not occur at stopping times for $X$, so standard ergodic or renewal arguments cannot be applied without further work.

This section is devoted to bridging this gap. Rather than directly analyzing the time averages $A_r(t)$, we instead work directly with the process $|X(t)|$ and establish a strong law of large numbers for $\frac{|X(t)|}{t}$ in the transient regime. The resulting expression for the velocity coincides with the above heuristic formula, thereby providing a rigorous justification for interpreting $\bar{\sigma}_r$ as the effective contribution of each monomer type to the long-time growth rate.

Before proving Theorem \ref{thm:velocity}, we require some preliminary results.  It is most convenient to shift our analysis, as much as possible, to the DTMC $Z$.   Recall that $\tau_n$ is the time of the $n$-th jump of $X$, with $\tau_0 = 0$,  
and $Z_n = X(\tau_n)$ is the embedded DTMC.  
Define
\begin{align*}
    J_t := \max \left\{ n \in \mathbb{N} \,\middle|\, \tau_n \le t \right\},
\end{align*}
which represents the number of jumps of $X$ that have occurred at or before time $t$.  
Since $X_t = Z_{J_t}$, it follows that
\begin{align}
    \lim_{t \to \infty} \frac{t}{|X_t|} = \lim_{t \to \infty} \frac{t}{|Z_{J_t}|},
    \label{ddccvvbb}
\end{align}
if the limits exist.

We begin by getting useful upper and lower bounds on the numerator $t$. The process $X$ is a CTMC and so its holding times are exponentially distributed.   Since the $j$th state visited by the chain is $Z_j$, we may denote these holding times via $H_{Z_j}$.  We then note that $\tau_{n+1} =\sum_{j=0}^n H_{Z_j}$, and so define $H^n := \sum_{j=0}^n H_{Z_j}$ and
\begin{align*}
    &(H^n)_i := \sum_{j=0}^n H_{Z_j} \cdot \mathbf{1}_{C_i}\!\left(C(Z_j)\right), \quad i = 1,\dots,d,\\
    &(H^n)_\root := \sum_{j=0}^n H_{Z_j} \cdot \mathbf{1}_{\root}(Z_j).
\end{align*}
The above give (i)  the total amount of time the process has spent in states with various cone types up to time $\tau_{n+1}$ and (ii) the total amount of time the process has spent in the root.
Clearly,
\begin{align}
\label{fghfghfgh}
    H^n = (H^n)_\root+\sum_{i=1}^d (H^n)_i.
\end{align}
Moreover, $ H^{J_t - 1} \le t \le H^{J_t}$, and hence
\begin{align*}
    \frac{H^{J_t - 1}}{|Z_{J_t}|} 
    \;\le\; \frac{t}{|Z_{J_t}|} 
    \;\le\; \frac{H^{J_t}}{|Z_{J_t}|}.
\end{align*}
Applying the squeeze theorem, it therefore suffices to determine
\begin{align*}
    \lim_{t \to \infty} \frac{H^{J_t - 1}}{|Z_{J_t}|}  \quad  \text{ and } \quad \lim_{t \to \infty}\frac{H^{J_t}}{|Z_{J_t}|}.
\end{align*}
Note that because the process is transient, we have $\lim_{t \to \infty}  \frac{(H^{J_t-1})_\root}{|Z_{J_t}|} = 0$  and  $\lim_{t \to \infty}  \frac{(H^{J_t})_\root}{|Z_{J_t}|} = 0$ almost surely.  Hence, in view of \eqref{fghfghfgh}, our analysis  reduces to analyzing
\begin{align}
\label{sdjdfcjfcc}
    \lim_{t \to \infty}\frac{(H^{J_t - 1})_i}{|Z_{J_t}|} \quad \text{ and } \quad   \lim_{t \to \infty}\frac{(H^{J_t})_i}{|Z_{J_t}|}
\end{align}
for each $i \in \{1,\dots,d\}$.
The arguments for the two limits in \eqref{sdjdfcjfcc} are essentially the same and so we only focus on the second.    

We require one more bit of notation.  For each $i \in \{1, \dots, d\}$, we let
\begin{align}
\label{eq:cone_counts}
    \chi_i(n) := \sum_{j=0}^n \mathbf{1}_{C_i}\!\left(C(Z_j)\right),
\end{align}
be the number of visits to polymers with cone type $C_i$ in the first $n+1$ states of the process $Z$, and let $\chi_\root(n)$ be the number of visits to the root.
Since $J_t \to \infty$ almost surely as $t \to \infty$,  we have
\begin{align}
\label{eq:holding_time_limit}
    \lim_{t \to \infty} \frac{(H^{J_t})_i}{|Z_{J_t}|}
    = \lim_{n \to \infty} \frac{(H^n)_i}{|Z_n|}
    = \lim_{n \to \infty} \left( \frac{(H^n)_i}{\chi_i(n)} \cdot \frac{\chi_i(n)}{n} \cdot \frac{n}{|Z_n|} \right),
\end{align}
with probability one, so long as the limits exist.  Hence, it is sufficient to calculate the following three limits:
\begin{align*}
    \lim_{n\to \infty} \frac{(H^n)_i}{\chi_i(n)}, \qquad \lim_{n\to \infty} \frac{\chi_i(n)}{n}, \qquad \text{and}\qquad \lim_{n\to \infty} \frac{n}{|Z_n|}.
\end{align*}

The first of the above limits is straightforward.  From the previous section, we know that $\bar \sigma_i > 0$, and so $\chi_i(n) \to \infty$, as $n \to \infty$.  Hence, from the law of large numbers,
\begin{align}
\label{frfurfbf}
    \lim_{n\to \infty} \frac{(H^n)_i}{\chi_i(n)} = \frac{1}{k_i^{-} + \sum_{r=1}^d k_r^{+} }, \quad \text{almost surely}.
\end{align}
Moreover, the third limit is known, and we simply cite a result (see \cite[Theorem 9.100, Exercise 9.101]{woess2009denumerable}).

\begin{lemma}
\label{thm:velocity_discrete}
    The following limit holds with probability one,
    \begin{align*}
        \lim_{n \to \infty} \frac{|Z_n|}{n} = \bar{v}, \quad \text{where} \quad \bar{v} = \left( \sum_{i=1}^d \bar{\sigma}_i \cdot \frac{F_i}{\frac{k_i^{-}}{k_i^{-} + \sum_{r=1}^d k_r^{+} } (1 - F_i)} \right)^{-1}.
    \end{align*}
\end{lemma}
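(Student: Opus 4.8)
The plan is to establish the discrete-time velocity $\lim_{n\to\infty} |Z_n|/n = \bar v$ by relating the length increments of $Z$ to the well-understood boundary process $W$ and its stationary cone-type chain $U$. The key observation is that the length $|Z_n|$ and the index $n$ can both be expressed in terms of quantities naturally attached to the boundary process. Recall that $e_k$ is the last visit of $Z$ to level $k$, and $W_k = Z_{e_k}$. Since $|Z_n|$ increases to infinity almost surely in the transient regime, it suffices to work along the ``boundary clock'' $\hat k(n) = \max\{k : e_k \le n\}$ and then pass back to $n$.

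The main step is to compute the expected number of steps of $Z$ that elapse between consecutive records of the boundary process, i.e.\ the growth of $e_{k+1} - e_k$ as a function of the cone type $C(W_k)$. First I would fix a cone type $C_i$ and analyze the excursion of $Z$ from $W_k = xM_i$ (with $C(xM_i) = C_i$) until the process permanently leaves level $k$ and never returns; the number of $Z$-steps contributing to this excursion, relative to a single net growth step, is governed by the quantity $F_i$ and the local holding structure. Concretely, I expect to show that the conditional expected number of discrete steps per unit of length growth, given that the current cone type is $C_i$, equals $\dfrac{F_i}{\tfrac{k_i^-}{k_i^- + \sum_{r=1}^d k_r^+}(1-F_i)}$, which is exactly the per-type factor appearing in the statement. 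This identity is essentially a first-step / renewal computation on the cone $T_x$, using the return probabilities $F_i$ from \eqref{eq:F_i_sec4} and the transition probabilities in \eqref{transition matrix P}; it is the content cited as \cite[Theorem 9.100, Exercise 9.101]{woess2009denumerable}.

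Next I would invoke the ergodic theorem for the irreducible positive-recurrent chain $\{U_k\}$ from Section~\ref{Limiting portion of each type of monomer}, whose stationary distribution is $\bar\sigma = (\bar\sigma_1,\dots,\bar\sigma_d)$. Along the boundary process, the long-run average of the per-type step counts converges almost surely to the $\bar\sigma$-weighted sum
\begin{align*}
    \lim_{k\to\infty} \frac{e_k}{k} = \sum_{i=1}^d \bar\sigma_i \cdot \frac{F_i}{\frac{k_i^{-}}{k_i^{-} + \sum_{r=1}^d k_r^{+}}(1-F_i)} =: \bar v^{-1},
\end{align*}
almost surely. Since $|Z_{e_k}| = k$ and $e_{\hat k(n)} \le n \le e_{\hat k(n)+1}$, a squeeze argument (using that consecutive gaps $e_{k+1}-e_k$ are negligible relative to $e_k$, exactly as in the proof of Theorem~\ref{thm:proportion}) transfers this limit from the subsequence $\{e_k\}$ to all of $n$, yielding $\lim_{n\to\infty} |Z_n|/n = \bar v$.

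The hard part will be the renewal computation establishing the per-type expected step count, since the boundary records $e_k$ are \emph{not} stopping times for $Z$ (they are defined via last-exit times, which look into the future). I would handle this by conditioning on the event that $Z$ never returns to level $k$ after a given visit, reducing the last-exit structure to an ordinary first-passage computation on the cone $T_x$ via a Doob $h$-transform (the $(1-F_i)$ factor reweights the walk to one conditioned never to return toward the root), and then the finiteness and exact value of the expected excursion length follow from standard generating-function identities for the return probabilities $F_i$. Because the final statement is quoted directly from \cite{woess2009denumerable}, in practice I would cite the relevant results there rather than reproduce this excursion analysis in full.
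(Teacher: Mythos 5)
Your proposal is correct and ultimately lands in the same place as the paper, which gives no proof of this lemma at all: it simply cites \cite[Theorem 9.100, Exercise 9.101]{woess2009denumerable}, exactly the fallback you name in your final paragraph. Moreover, your sketch---interpreting $\frac{F_i}{\frac{k_i^{-}}{k_i^{-}+\sum_{r=1}^d k_r^{+}}(1-F_i)}$ as the expected number of $Z$-steps per level of net growth given cone type $C_i$, averaging against the stationary distribution $\bar\sigma$ of the cone-type chain along the boundary process, and transferring from the last-exit times $e_k$ to all $n$ by a squeeze---is a faithful outline of the cited argument, and mirrors the bivariate boundary-chain machinery (Markov structure of the increments despite $e_k$ not being stopping times, positive recurrence, ergodic theorem) that the paper itself develops in Section \ref{Asymptotic Growth Rate} to prove Lemma \ref{lemma:89767685}.
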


 For the middle term, $\lim_{n\to \infty} \frac{\chi_i(n)}{n}$, we have the following lemma.

 \begin{lemma}
 \label{lemma:89767685}
     The following limit holds with probability one,
     \begin{align}\label{li valueNEW}
    \lim_{k \rightarrow \infty} \frac{\chi_i(n)}{n} =\frac{\bar{\sigma}_i \left(k_i^- + \sum_{r=1}^d k_r^+  \right)}{\sum_{j=1}^d \bar{\sigma}_j \left(k_j^- + \sum_{r=1}^d k_r^+  \right)}.
\end{align}
\end{lemma}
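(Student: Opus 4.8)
The plan is to avoid any delicate analysis of the excursion structure of $Z$ and instead pin down $\lim_{n\to\infty}\chi_i(n)/n$ through two \emph{mass-balance} identities for the polymer composition. The point is that every increment of $\chi_i$ is tied, through the one-step probabilities in \eqref{transition matrix P}, to the attachment or detachment of a specific monomer type, so the occupation counts $\chi_i(n)$ are linked to the composition counts $N_i^Z(n)$ by exact bookkeeping. Throughout, write $q_i := k_i^- + \sum_{r=1}^d k_r^+$ for the total jump rate out of a state of cone type $C_i$, so that from a state $xM_i$ the chain appends $M_j$ with probability $k_j^+/q_i$ and detaches with probability $k_i^-/q_i$. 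The only external inputs I would use are the already-established almost-sure limits $N_i^Z(n)/|Z_n| \to \bar\sigma_i$ (from the proof of Theorem~\ref{thm:proportion}) and $|Z_n|/n \to \bar v$ (Lemma~\ref{thm:velocity_discrete}), together with the algebraic identity $\bar\sigma_i(m+k_i^-)=k_i^+$ recorded in Proposition~\ref{F_i solutions}.

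First I would record the exact identity $N_i^Z(n) = A_i(n) - D_i(n)$, where $A_i(n)$ counts the steps among the first $n$ that append a monomer $M_i$ and $D_i(n)$ counts the steps that detach an $M_i$ (a detachment of $M_i$ can occur only from a state of cone type $C_i$). With $\mathcal F_j = \sigma(Z_0,\dots,Z_j)$, the conditional one-step probabilities are $P(\text{append }M_i \mid \mathcal F_j) = k_i^+/q(Z_j)$ and $P(\text{detach and } C(Z_j)=C_i \mid \mathcal F_j) = (k_i^-/q_i)\,\mathbf 1_{C_i}(C(Z_j))$. Hence $A_i(n) - \sum_{j=0}^{n-1} k_i^+/q(Z_j)$ and $D_i(n) - (k_i^-/q_i)\,\chi_i(n-1)$ are martingales with increments bounded by $1$, so by the standard strong law for martingales with bounded increments each is $o(n)$ almost surely. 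Since the process is transient, visits to the root are almost surely finite and contribute negligibly, which also gives
\[
\frac1n\sum_{j=0}^{n-1}\frac{1}{q(Z_j)} = \sum_{l=1}^d \frac{1}{q_l}\cdot\frac{\chi_l(n)}{n} + o(1), \quad \text{almost surely}.
\]

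Combining these, the bounded vector $\mathbf x^{(n)} := (\chi_1(n)/n,\dots,\chi_d(n)/n)$ satisfies, for each $i$,
\[
k_i^+ \sum_{l=1}^d \frac{x_l^{(n)}}{q_l} - \frac{k_i^-}{q_i}\,x_i^{(n)} = \frac{N_i^Z(n)}{n} + o(1),
\]
and the right-hand side converges almost surely to $\bar\sigma_i\bar v$, since $N_i^Z(n)/n = (N_i^Z(n)/|Z_n|)(|Z_n|/n)\to\bar\sigma_i\bar v$. In matrix form this reads $L\,\mathbf x^{(n)} = \mathbf b^{(n)}$ with $\mathbf b^{(n)}\to\bar v\,\bar\sigma$, where $L = -\operatorname{diag}(k_i^-/q_i) + \mathbf k^+(\mathbf r)^\top$ is a fixed diagonal-plus-rank-one matrix, $\mathbf k^+=(k_1^+,\dots,k_d^+)^\top$ and $\mathbf r=(1/q_1,\dots,1/q_d)^\top$. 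By the matrix determinant lemma, $\det L = (-1)^d\big(\prod_i k_i^-/q_i\big)(1-\alpha)\neq 0$ because $\alpha>1$, so $L$ is invertible and $\mathbf x^{(n)} = L^{-1}\mathbf b^{(n)}\to \bar v\,L^{-1}\bar\sigma$ almost surely; in particular the limit exists. To identify it, I would verify directly that $\boldsymbol\ell$ with $\ell_i = \bar\sigma_i q_i/\sum_j\bar\sigma_j q_j$ solves $L\boldsymbol\ell=\bar v\,\bar\sigma$: using $\sum_l\bar\sigma_l=1$ this reduces to $k_i^+-k_i^-\bar\sigma_i = \bar v\big(\sum_j\bar\sigma_j q_j\big)\bar\sigma_i$, and a short simplification of Lemma~\ref{thm:velocity_discrete} via $F_i=k_i^-/(m+k_i^-)$ gives $\bar v\sum_j\bar\sigma_j q_j = m$, so the equation collapses to $\bar\sigma_i(m+k_i^-)=k_i^+$ from Proposition~\ref{F_i solutions}.

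The main obstacle is the martingale step together with its bookkeeping: one must set up the compensators for $A_i$ and $D_i$ correctly (including the root case $q(\root)=\sum_r k_r^+$), confirm that the bounded-increment martingale strong law applies, and check that root visits and the $o(1)$ discrepancies between $\chi_l(n)$, $\chi_l(n-1)$, and their normalizations are genuinely negligible. Once these are in place, invertibility of $L$ is what upgrades the balance relations from mere constraints on subsequential limits to an actual almost-sure limit, and the final identification is a one-line algebraic check against the formula for $\bar\sigma_i$.
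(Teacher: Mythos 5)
Your proposal is correct, and it takes a genuinely different route from the paper. The paper proceeds in two heavy steps: first it computes $\lim_n \mathbb{E}_{\root}[\chi_i(n)]/n$ via Green functions $G(\root,\cdot)$, reversibility of the measure $\mu$, and the ratios $R_{ij}$; second, it establishes almost-sure existence of $\lim_n \chi_i(n)/n$ by introducing the bivariate chain $\left(U_k,(\Delta_k)_i\right)$ of cone types and increments, proving it is Markov (Proposition \ref{transition prob. d}, whose proof occupies the appendix and rests on a delicate last-exit decomposition that must avoid the fact that the $e_k$ are not stopping times), showing positive recurrence, applying the ergodic theorem, and transferring back from the boundary process to $Z$; the two steps are then glued with bounded convergence. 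You instead exploit the exact bookkeeping identity $N_i^Z(n)=A_i(n)-D_i(n)$, compensate the attachment and detachment counts to obtain martingales with increments bounded by $1$, apply the martingale strong law to make the compensator discrepancies $o(n)$ almost surely, and use transience only to discard the root terms. This converts the problem into the linear system $L\,\mathbf{x}^{(n)}=\mathbf{b}^{(n)}$ with $\mathbf{b}^{(n)}\to\bar v\,\bar\sigma$ almost surely (using exactly the two external inputs the paper has already established: $N_i^Z(n)/|Z_n|\to\bar\sigma_i$ from the proof of Theorem \ref{thm:proportion}, and $|Z_n|/n\to\bar v$ from Lemma \ref{thm:velocity_discrete}); invertibility of the diagonal-plus-rank-one matrix $L$, guaranteed precisely because $\alpha>1$, simultaneously yields existence and identification of the limit, and your algebraic verification (including $\bar v\sum_j\bar\sigma_j q_j=m$, which I checked reduces correctly via $F_i=k_i^-/(m+k_i^-)$) recovers the stated formula; there is no circularity, since neither input depends on this lemma. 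What your approach buys is economy and robustness: it bypasses Green functions, the appendix entirely, and the boundary-process transfer, and it cleanly explains why transience ($\alpha>1$) is what makes the limit well-defined, via $\det L=(-1)^d\bigl(\prod_i k_i^-/q_i\bigr)(1-\alpha)\neq 0$. What the paper's approach buys is structural information of independent interest—the stationary law of $\left(U_k,(\Delta_k)_i\right)$ and the expected-occupation asymptotics—which feeds the broader cone-type machinery. Two trivial points to polish: the identity $\bar\sigma_i(m+k_i^-)=k_i^+$ is recorded in \eqref{stationary dist} (the stationary-distribution proposition / Theorem \ref{thm:proportion}) rather than in Proposition \ref{F_i solutions}, which only gives $F_i=k_i^-/(m+k_i^-)$; and you should state explicitly the version of the martingale strong law you invoke (bounded increments give $\sum_n \mathbb{E}[(M_n-M_{n-1})^2\mid\mathcal{F}_{n-1}]/n^2<\infty$, hence $M_n/n\to 0$ almost surely).
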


The proof of Lemma \ref{lemma:89767685} is somewhat lengthy, so we postpone it until later. For now, we rely on it to establish Theorem \ref{thm:velocity}, the main result of this section.

\begin{proof}[Proof of Theorem \ref{thm:velocity}]
The proof essentially consists of plugging in the three pieces detailed above.  Noting that $\lim_{t\to\infty}\frac{(H^{J_t})_{\root}}{|Z_{J_t}|}=0$ almost surely, together with the three limits needed for \eqref{eq:holding_time_limit} above, we have that 
\begin{align*}
    \lim_{t \to \infty} \frac{H^{J_t}}{|Z_{J_t}|}
    = \lim_{t \to \infty} \sum_{i=1}^d \frac{(H^{J_t})_i}{|Z_{J_t}|} + \lim_{t \to \infty} \frac{(H^{J_t})_\root}{|Z_{J_t}|} 
    = \sum_{i=1}^d \frac{1}{k_i^- + \sum_{r=1}^d k_r^+ } \cdot \frac{\bar{\sigma}_i \left(k_i^- + \sum_{r=1}^d k_r^+  \right)}{\sum_{j=1}^d \bar{\sigma}_j \left(k_j^- + \sum_{r=1}^d k_r^+  \right)} \cdot \frac{1}{\bar{v}} ,
\end{align*}
with probability one, where according to Lemma \ref{thm:velocity_discrete}, 
    \begin{align*}
        \frac{1}{\bar{v}} = \sum_{a=1}^d \bar{\sigma}_a \cdot \frac{F_a}{\frac{k_a^{-}}{k_a^{-} + \sum_{r=1}^d k_r^{+} } (1 - F_a)} .
    \end{align*}
After some  algebra, we have the following almost sure limit 
    \begin{align*}
        &\lim_{t \to \infty} \frac{H^{J_t}}{|Z_{J_t}|} =\frac{1}{\sum_{r=1}^{d} k_r^{+}  - \sum_{r=1}^{d} \bar{\sigma}_r k_r^-}.
    \end{align*} 
By the same token, with probability one we also have
\begin{align*}
    \lim_{t \to \infty} \frac{H^{J_t - 1}}{|Z_{J_t}|}
    &= \sum_{i=1}^d \frac{1}{k_i^- + \sum_{r=1}^d k_r^+ } \cdot \frac{\bar{\sigma}_i \left(k_i^- + \sum_{r=1}^d k_r^+  \right)}{\sum_{j=1}^d \bar{\sigma}_j \left(k_j^- + \sum_{r=1}^d k_r^+  \right)}  \cdot \frac{1}{\bar{v}} = \frac{1}{\sum_{r=1}^d k_r^+  - \sum_{r=1}^d \bar{\sigma}_r k_r^-}.
\end{align*}
Recalling $\frac{H^{J_t - 1}}{|Z_{J_t}|} \leq \frac{t}{|Z_{J_t}|} \leq \frac{H^{J_t}}{|Z_{J_t}|}$, an application of  the squeeze theorem completes the proof.  
\end{proof}

With our main result in hand, the remainder of this section, and the appendix, is dedicated to proving Lemma \ref{lemma:89767685}.

\subsection{Proof of Lemma \ref{lemma:89767685}}
The proof of Lemma \ref{lemma:89767685} consists of two main steps.  
\begin{enumerate}
    \item We first establish the limit
    \begin{equation}
    \label{eq:980798078}
    \lim_{n\to \infty} \frac{\mathbb{E}_{\root}[\chi_i(n)]}{n} = \frac{\bar{\sigma}_i \left(k_i^- + \sum_{r=1}^d k_r^+  \right)}{\sum_{j=1}^d \bar{\sigma}_j \left(k_j^- + \sum_{r=1}^d k_r^+  \right)}.
    \end{equation}
    \item We then prove that the limit $\lim_{n\to \infty} \frac{\chi_i(n)}{n}$ exists almost surely.
\end{enumerate}

Assuming the above are established, the proof is straightforward.

\begin{proof}[Proof of Lemma \ref{lemma:89767685}]
    Since $\left| \frac{\chi_i(n)}{n} \right| \leq 1$, according to the Bounded Convergence Theorem, we have
\begin{align}\label{li value}
    \lim_{k \rightarrow \infty} \frac{\chi_i(n)}{n} = \lim_{k \rightarrow \infty} \frac{\E_{\root}[\chi_i(n)]}{n} =\frac{\bar{\sigma}_i \left(k_i^- + \sum_{r=1}^d k_r^+  \right)}{\sum_{j=1}^d \bar{\sigma}_j \left(k_j^- + \sum_{r=1}^d k_r^+  \right)}.
\end{align}
where the last equality comes from \eqref{eq:980798078}. 
 \end{proof}

We break the analysis of the two points above into two separate subsections.

\subsubsection{Proof of the limit \eqref{eq:980798078}}

For any two states $x, y \in \mT$, we define the function
\begin{align}
\label{G definition}
G(x, y) :=  \sum_{n=0}^{\infty} p^{(n)}(x, y),
\end{align}
where $p^{(n)}(x, y) = P_x(Z_n = y)$.  Note that $G(x,y)$ gives the total expected number of visits to state $y$ given an initial condition of $x$.  From \cite[Section 2]{Nagnibeda2002RWOT}, we know  $G(x, y) < \infty$ for any $x, y \in \mT$, and we also know the following lemma holds.

\begin{lemma}\cite[Lemma 2.1]{Nagnibeda2002RWOT}
\label{G properties}
Let $F$ be the  function defined in \eqref{Sec4:F}. Then the following relations hold: for any $x \in \mT$ and any monomer type $M_i \in \mathcal{M}$,
\begin{align}\label{relationship of G and F}
\begin{split}
    &F(x M_i, \root) = F(x M_i, x) \cdot F(x, \root) \quad \text{ if } x \neq \root,   \\
    &G(x, x)= \frac{1}{1 - F(x,x)},\\
    &G(x, y) = F(x, y) \cdot G(y, y) \quad \text{if } x \neq y.
\end{split}
\end{align}
\end{lemma}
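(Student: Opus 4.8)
The three relations are classical first-passage (Green-function) identities for the discrete-time chain $\{Z_n\}$, and the unifying tool throughout is a renewal decomposition at a first-passage time, justified by the strong Markov property. Since $G(x,y) < \infty$ everywhere (as cited from \cite{Nagnibeda2002RWOT}), all the series manipulations below can be carried out directly, with every interchange of summation justified by nonnegativity and Tonelli's theorem. The plan is to dispatch the two Green-function identities by elementary convolution and to isolate the only genuinely tree-dependent reasoning in the multiplicative first-passage identity.

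For $G(x,x) = 1/(1-F(x,x))$, I would decompose a trajectory started at $x$ according to its \emph{first return} to $x$. For $n \ge 1$, conditioning on the time $m$ of that first return gives $p^{(n)}(x,x) = \sum_{m=1}^n f^{(m)}(x,x)\,p^{(n-m)}(x,x)$, while $p^{(0)}(x,x)=1$. Summing over all $n \ge 0$ and applying Tonelli yields $G(x,x) = 1 + F(x,x)\,G(x,x)$, and solving gives the claim (transience guarantees $F(x,x)<1$, hence finiteness). The identity $G(x,y) = F(x,y)\,G(y,y)$ for $x \ne y$ is identical in spirit: conditioning on the first visit to $y$ gives $p^{(n)}(x,y) = \sum_{m=1}^n f^{(m)}(x,y)\,p^{(n-m)}(y,y)$ for every $n \ge 0$ (using $p^{(0)}(x,y)=0$), and summing factors the double series as $F(x,y)\,G(y,y)$.

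The multiplicative identity $F(xM_i,\root) = F(xM_i,x)\,F(x,\root)$ is where the tree structure enters, and is the step I expect to require the most care. The key structural observation is that $x = (xM_i)^-$ is a cut vertex separating the subtree $\mT_{xM_i}$ from $\root$: since $T$ is a tree and transitions only connect a polymer to its parent or to a one-longer child, the unique path from any vertex of $\mT_{xM_i}$ to $\root$ passes through $x$, so every trajectory of $Z$ starting at $xM_i$ that ever reaches $\root$ must first visit $x$. Consequently $\{Z \text{ reaches } \root\} \subseteq \{Z \text{ reaches } x\}$, with the first hitting time $T_x$ of $x$ necessarily preceding that of $\root$. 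Applying the strong Markov property at the stopping time $T_x$ on the event $\{T_x < \infty\}$, I would write $P_{xM_i}(\text{hit } \root) = P_{xM_i}(\text{hit } x)\,P_x(\text{hit } \root)$, which is precisely the asserted identity. The main obstacle here is a matter of rigor rather than difficulty: one must verify carefully that $x$ lies on \emph{every} descending path (immediate from the absence of cycles in $T$), and that the strong Markov property may be invoked at the possibly infinite $T_x$, which is harmless because all quantities are nonnegative and the event $\{T_x = \infty\}$ contributes nothing to reaching $\root$.
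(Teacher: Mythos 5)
Your proof is correct, but it is worth noting that the paper itself does not prove this lemma at all: it is imported wholesale as a citation to Lemma 2.1 of Nagnibeda--Woess, and the surrounding text only uses the conclusions. So the comparison here is between a citation and your self-contained derivation. Your three arguments are exactly the classical ones and they go through: the identities $G(x,x) = 1/(1-F(x,x))$ and $G(x,y) = F(x,y)\,G(y,y)$ follow from the first-return and first-passage convolution decompositions $p^{(n)}(x,x) = \sum_{m=1}^n f^{(m)}(x,x)\,p^{(n-m)}(x,x)$ and $p^{(n)}(x,y) = \sum_{m=1}^n f^{(m)}(x,y)\,p^{(n-m)}(y,y)$, with Tonelli justifying the interchange of sums (note that in the transient setting the cited finiteness $G(x,x)<\infty$ itself forces $F(x,x)<1$, so you do not even need a separate appeal to transience there); and the multiplicative identity $F(xM_i,\root) = F(xM_i,x)\,F(x,\root)$ is correctly reduced to the cut-vertex property of $x$ in the tree (every trajectory from $xM_i$ to $\root$ must pass through the parent $x$, since all transitions move between a polymer and its parent or children) plus the strong Markov property at the first hitting time of $x$, restricted to the event where that time is finite. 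Your proof also correctly respects the paper's convention $f^{(0)} \equiv 0$, under which $F(x,x)$ is the first-return probability, which is what makes the Green-function identity come out in the stated form. What your version buys is self-containedness and the explicit identification of where the tree structure is genuinely needed (only in the first identity); what the paper's citation buys is brevity, and the reassurance that the identities hold in the generality (arbitrary transient nearest-neighbor chains on trees) in which Nagnibeda and Woess state them.
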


We define  recursively the following (reversible) measure,
\begin{align}
\label{measure}
    \mu(\root) = 1, \quad \text{and} \quad \mu(x) = \mu(x^{-}) \cdot \frac{p(x^{-}, x)}{p(x, x^{-})} \quad \text{for } x \neq \root.
\end{align}

\begin{corollary}
\label{reversibility}
    For any $y \in \mT$,
    \begin{align*}
        G(\root, y) \cdot \mu(\root) = G(y, \root) \cdot \mu(y).
    \end{align*}
\end{corollary}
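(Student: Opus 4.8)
The plan is to recognize the stated identity as the standard reversibility property of the Green's function $G$ with respect to the measure $\mu$ defined in \eqref{measure}. The key observation is that, because the walk lives on a tree, the only nonzero transitions are between a vertex and its tree-neighbors, and the recursive definition \eqref{measure} is arranged precisely so that $\mu$ satisfies detailed balance. Everything then follows by pushing this one-step balance through the $n$-step transition probabilities and summing.

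First I would verify that $\mu$ is reversible for the embedded chain $Z$, i.e.\ that
\[
\mu(x)\, p(x,y) = \mu(y)\, p(y,x) \qquad \text{for all } x,y \in \mT.
\]
For a pair of tree-neighbors with $x = y^{-}$, this is exactly the recursion in \eqref{measure} after cross-multiplying; for any pair $x,y$ that are not neighbors in $T$, both $p(x,y)$ and $p(y,x)$ vanish and the identity is trivial. Hence detailed balance holds globally.

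Next I would promote this one-step identity to the $n$-step transition probabilities $p^{(n)}$ by induction on $n$, proving
\[
\mu(x)\, p^{(n)}(x,y) = \mu(y)\, p^{(n)}(y,x) \qquad \text{for all } n \ge 0,\ x,y \in \mT.
\]
The cases $n=0$ (Kronecker delta) and $n=1$ (detailed balance) are immediate. For the inductive step I would expand $p^{(n+1)}(x,y) = \sum_z p^{(n)}(x,z)\,p(z,y)$, apply the inductive hypothesis to rewrite $\mu(x)\,p^{(n)}(x,z) = \mu(z)\,p^{(n)}(z,x)$, then use one-step balance $\mu(z)\,p(z,y) = \mu(y)\,p(y,z)$, and finally recombine the sum as $\mu(y)\,p^{(n+1)}(y,x)$.

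Finally, summing over $n \ge 0$ and using that $G(x,y) = \sum_{n\ge 0} p^{(n)}(x,y)$ is finite for all $x,y$ (as cited from \cite{Nagnibeda2002RWOT}) gives $\mu(x)\,G(x,y) = \mu(y)\,G(y,x)$; specializing to $x = \root$ and recalling $\mu(\root)=1$ yields the corollary. The only genuinely delicate point is the interchange of the infinite sum with the algebraic steps inside the induction, which is justified by nonnegativity of all terms (Tonelli) together with the finiteness of $G$. An alternative and shorter route bypasses the induction by combining the factorizations $G(\root,y) = F(\root,y)\,G(y,y)$ and $G(y,\root) = F(y,\root)\,G(\root,\root)$ from Lemma~\ref{G properties} with reversibility, but the inductive argument is the most transparent and self-contained, so I would present that as the main proof.
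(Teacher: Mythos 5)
Your proof is correct and takes essentially the same approach as the paper: both promote the one-step detailed balance encoded in \eqref{measure} to the $n$-step identity $\mu(x)\,p^{(n)}(x,y) = \mu(y)\,p^{(n)}(y,x)$ and then sum over $n$, the only difference being that the paper reverses each admissible path explicitly while you run an induction via Chapman--Kolmogorov, which is the same computation repackaged. (One small caution: the ``shorter alternative'' you sketch at the end is not actually self-contained, since relating $F(\root,y)\,G(y,y)$ to $F(y,\root)\,G(\root,\root)$ would itself require the reversibility you are trying to prove; but as you present the induction as the main argument, this does not affect correctness.)
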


\begin{proof}
Let $\Gamma_{x,y}^{n}$ denote the set of all admissible paths of length $n$ from $x$ to $y$, that is,
\begin{align*}
    \Gamma_{x,y}^{n} := \left\{(z_0,z_1,\dots,z_n) \in \mT^{n+1} \ | z_0 = x,\ z_n = y,\ p(z_i, z_{i+1}) > 0, \ \forall\, i \in \{0, \dots, n-1\} \right\}.
\end{align*}
For any $y \in \mT$, the $n$-step transition probability $p^{(n)}(\root, y)$ can then be expressed as the sum over all such paths:
\begin{align*}
    p^{(n)}(\root, y) = \sum_{(z_0, z_1, \ldots, z_n) \in \Gamma_{\root,y}^{n}} p(z_0, z_1) \, p(z_1, z_2) \cdots p(z_{n-1}, z_n).
\end{align*}

Using the reversibility condition in \eqref{measure}, which relates $\mu$ and $p$ for each adjacent pair $(z_i, z_{i+1})$, we have for $(z_0, z_1, \ldots, z_n) \in \Gamma_{\root,y}^{n}$:
\begin{align*}
    \mu(\root) p(\root, z_1) &= \mu(z_1) p(z_1, \root), \\
    \mu(z_1) p(z_1, z_2) &= \mu(z_2) p(z_2, z_1),\\
    &\vdots\\
    \mu(z_{n-1}) p(z_{n-1},z_n) &= \mu(z_n)p(z_n,z_{n-1}).
\end{align*}
Applying this relation repeatedly yields
\begin{align}
\label{reversibility over paths}
\begin{split}
    &\mu(\root)\, p(\root, z_1)\, p(z_1, z_2)\, \cdots\, p(z_{n-1}, y) \\
    &= \mu(z_1)\, p(z_1, \root)\, p(z_1, z_2)\, \cdots\, p(z_{n-1}, y) \\
    &= \mu(z_2)\, p(z_2, z_1)\, p(z_1, \root)\, p(z_2, z_3)\, \cdots\, p(z_{n-1}, y) \\
    &\quad \vdots \\
    &= \mu(y)\, p(y, z_{n-1})\, p(z_{n-1}, z_{n-2})\, \cdots\, p(z_1, \root).
\end{split}
\end{align}
Summing over all such paths yields
\begin{align*}
    \mu(\root) p^{(n)}(\root,y) 
    &= \mu(\root) \sum_{(z_0,\dots,z_n)\in \Gamma_{\root,y}^{n}} p(z_0,z_1)\cdots p(z_{n-1},z_n) \\
    &= \mu(y) \sum_{(z_0,\dots,z_n)\in \Gamma_{y,\root}^{n}} p(z_0,z_1)\cdots p(z_{n-1},z_n) \\
    &= \mu(y) p^{(n)}(y,\root),
\end{align*}
where the second equality uses \eqref{reversibility over paths}, the third reindexes the reversed paths.
Finally, summing over $n \geq 0$, we obtain
\begin{align*}
        G(\root, y) \cdot \mu(\root) 
    = \sum_{n=0}^\infty p^{(n)}(\root, y)\, \mu(\root) 
    = \sum_{n=0}^\infty p^{(n)}(y, \root)\, \mu(y) = G(y, \root) \cdot \mu(y),
\end{align*}
and the result is shown.
\end{proof}

For each $i,j \in \{1,\dots, d\}$, we now compute the ratio 
$\frac{G(\root, x M_i)}{G(\root, x M_j)}$, which will play an important role  later.

\begin{proposition}
For any $x \in \mT$ and any monomer types $M_i, M_j \in \mathcal{M}$,
\begin{align}
\label{ratio_d_types}
    R_{ij} := \frac{G\left(\root, x M_j\right)}{G\left(\root, x M_i\right)} 
    = \frac{\bar \sigma_j\left(k_j^{-}+\sum_{r=1}^{d} k_r^{+}\right)}
           {\bar \sigma_i\left(k_i^{-}+\sum_{r=1}^{d} k_r^{+}\right)}.
\end{align}
\end{proposition}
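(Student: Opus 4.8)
The plan is to reduce the computation to two structural inputs already available: the reversibility identity of Corollary~\ref{reversibility} and the $F$--$G$ relations of Lemma~\ref{G properties}. Since $\mu(\root) = 1$, Corollary~\ref{reversibility} gives $G(\root, y) = G(y, \root)\,\mu(y)$ for every $y \in \mT$, so I would first split the target ratio as
\[
R_{ij} = \frac{G(\root, xM_j)}{G(\root, xM_i)} = \frac{G(xM_j, \root)}{G(xM_i, \root)} \cdot \frac{\mu(xM_j)}{\mu(xM_i)},
\]
and then evaluate the two factors separately.

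For the first factor I would apply the relation $G(z, \root) = F(z, \root)\,G(\root, \root)$ (the third identity in Lemma~\ref{G properties}), so that $G(\root,\root)$ cancels and $G(xM_j,\root)/G(xM_i,\root) = F(xM_j,\root)/F(xM_i,\root)$. The factorization $F(xM_i, \root) = F(xM_i, x)\,F(x, \root)$ (the first identity in Lemma~\ref{G properties}, valid for $x \neq \root$, and trivially true for $x = \root$ since then $F(M_i,\root) = F_i$) removes the common factor $F(x,\root)$, leaving $F_j/F_i$, where I use the cone-type identity $F(xM_i,x) = F_i$ from \eqref{eq:F_i_sec4}.

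For the second factor I would unfold the recursive definition \eqref{measure}, namely $\mu(xM_i) = \mu(x)\,p(x,xM_i)/p(xM_i,x)$. Both $\mu(x)$ and the common normalizing constant in $p(x,xM_i)$ depend only on $x$ and hence cancel in the ratio $\mu(xM_j)/\mu(xM_i)$; using the explicit detachment probability $p(xM_i,x) = k_i^-/(k_i^- + \sum_{r=1}^d k_r^+)$, this leaves
\[
\frac{\mu(xM_j)}{\mu(xM_i)} = \frac{k_j^+}{k_i^+}\cdot\frac{k_i^-\left(k_j^- + \sum_{r=1}^d k_r^+\right)}{k_j^-\left(k_i^- + \sum_{r=1}^d k_r^+\right)}.
\]

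Finally I would multiply the two factors and regroup the $i$- and $j$-dependent terms. The key simplification is to recognize from \eqref{stationary dist} that $F_i\,k_i^+/k_i^- = \bar\sigma_i$; this converts the product $\frac{F_j}{F_i}\cdot\frac{k_j^+ k_i^-}{k_i^+ k_j^-}$ into $\bar\sigma_j/\bar\sigma_i$, and the leftover factor $(k_j^- + \sum_{r=1}^d k_r^+)/(k_i^- + \sum_{r=1}^d k_r^+)$ yields exactly the claimed formula. The argument is essentially bookkeeping once reversibility and the $F$--$G$ relations are in place, so I do not anticipate a substantial obstacle; the only point requiring mild care is the boundary case $x = \root$ in the factorization of $F(\cdot,\root)$, which I would dispatch by noting $F(M_i,\root) = F_i$ directly.
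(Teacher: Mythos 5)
Your proof is correct and follows essentially the same route as the paper's: both rest on Corollary~\ref{reversibility}, the $F$--$G$ relations of Lemma~\ref{G properties}, the measure recursion \eqref{measure}, and the identity $\bar\sigma_i = F_i k_i^+/k_i^-$ from \eqref{stationary dist}. The only difference is organizational---the paper normalizes each $G(\root, xM_i)$ by the intermediate quantity $G(\root, x)$ before taking the ratio, while you split $R_{ij}$ directly into a $G(\cdot,\root)$-ratio times a $\mu$-ratio---which is purely a matter of bookkeeping.
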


\begin{proof}

By Corollary~\ref{reversibility}, for $x = \root$ we have
\begin{align*}
    \frac{G(\root, M_i)}{G(\root, \root)}
    &= \frac{\mu(\root) G(\root,  M_i)}{\mu(\root) G(\root, \root)} 
       = \frac{\mu( M_i) G( M_i,\root)}{\mu(\root) G(\root,\root)}.
\end{align*}
From \eqref{measure} and Lemma~\ref{G properties}, we have
\begin{align*}
    &\frac{\mu( M_i)}{\mu(\root)} = \frac{p(\root,  M_i)}{p( M_i, \root)}, \quad \text{and} \quad  G( M_i,\root) = F( M_i,\root) G(\root,\root),
\end{align*}
respectively.
Hence,
\begin{align*}
    \frac{G(\root,  M_i)}{G(\root, \root)}
    &= \frac{p(\root,  M_i) \, F( M_i, \root)G(\root,\root)}{p( M_i, \root)G(\root,\root)} \\
    &= \frac{p(\root,  M_i) \, F_i}{\left(\frac{k_i^{-}}{k_i^{-} + \sum_{r=1}^{d} k_r^{+}}\right)}.
\end{align*}
By the same argument,
\begin{align*}
    \frac{G(\root,  M_j)}{G(\root, \root)}
    = \frac{p(\root,  M_j) \, F_j}{\left(\frac{k_j^{-}}{k_j^{-} + \sum_{r=1}^{d} k_r^{+}}\right)}.
\end{align*}
Moreover, for each $x \in \mT$, $x \ne \root$ and $M_i, M_j \in \mathcal{M}$, we have
\begin{align*}
    \frac{G(\root, x M_i)}{G(\root, x)}
    &= \frac{\mu(\root) G(\root, x M_i)}{\mu(\root) G(\root, x)} 
       = \frac{\mu(x M_i) G(x M_i,\root)}{\mu(x) G(x,\root)},
\end{align*}
by Corollary~\ref{reversibility}.
From \eqref{measure} and  Lemma~\ref{G properties}, respectively, we have
\begin{align*}
    \frac{\mu(x M_i)}{\mu(x)} &= \frac{p(x, x M_i)}{p(x M_i, x)}, \\
    G(x M_i,\root) &= F(x M_i,\root) \, G(\root,\root) = F(x M_i,x) \, F(x,\root) \, G(\root,\root), \\
    G(x,\root) &= F(x,\root) \, G(\root,\root).
\end{align*}
Combining the above yields,
\begin{align*}
    \frac{G(\root, x M_i)}{G(\root, x)}
    &= \frac{p(x, x M_i) \, F(x M_i, x) \, F(x,\root) \, G(\root, \root)}{p(x M_i, x) \, F(x, \root) \, G(\root, \root)} 
    = \frac{p(x, x M_i) \, F(x M_i, x)}{p(x M_i, x)} \\
    &= \frac{p(x, x M_i) \, F_i}{\left(\frac{k_i^{-}}{k_i^{-} + \sum_{r=1}^{d} k_r^{+}}\right)}.
\end{align*}
By the same argument,
\begin{align*}
    \frac{G(\root, x M_j)}{G(\root, x)}
    = \frac{p(x, x M_j) \, F_j}{\left(\frac{k_j^{-}}{k_j^{-} + \sum_{r=1}^{d} k_r^{+}}\right)}.
\end{align*}
Finally, because
\begin{align*}
    \frac{p(x, x M_i)}{p(x, x M_j)} = \frac{k_i^+}{k_j^+},
\end{align*}
for all $x \in \mT$, 
we obtain
\begin{align*}
    \frac{G\left(\root, x M_j\right)}{G\left(\root, x M_i\right)} 
    &= \frac{p(x, x M_j) F_j}{\frac{k_j^{-}}{k_j^{-} + \sum_{r=1}^{d} k_r^{+}}}
       \cdot \frac{\frac{k_i^{-}}{k_i^{-} + \sum_{r=1}^{d} k_r^{+}}}
                  {p(x, x M_i) F_i} 
    = \frac{k_j^{+} k_i^{-} F_j \left(k_j^{-} + \sum_{r=1}^{d} k_r^{+}\right)}
            {k_i^{+} k_j^{-} F_i \left(k_i^{-} + \sum_{r=1}^{d} k_r^{+}\right)}
       = \frac{\bar \sigma_j \left(k_j^{-} + \sum_{r=1}^{d} k_r^{+}\right)}
              {\bar \sigma_i \left(k_i^{-} + \sum_{r=1}^{d} k_r^{+}\right)},
\end{align*}
where the last equality follows from \eqref{stationary dist}.
\end{proof}

We are now in a position to give the following proposition to compute the limit
\begin{align*}
    \lim_{n \to \infty} \frac{\mathbb{E}_{\root}[\chi_i(n)]}{n}.
\end{align*}

\begin{proposition}
\label{prop:ni_ratio_general}
For each $i \in \{1, 2, \dots, d\}$, the following limit holds: 
\begin{align}
\label{1/1+R}
    \lim_{n \to \infty} \frac{\mathbb{E}_{\root}[\chi_i(n)]}{n} 
    =  \lim_{n \to \infty} 
       \frac{\mathbb{E}_{\root}[\chi_i(n)]}
            {\sum_{j=1}^d \mathbb{E}_{\root}[\chi_j(n)]}
    = \frac{1}{ \sum_{j =1}^d  R_{ij}}
    = \frac{\bar{\sigma}_i \left(k_i^- + \sum_{r=1}^d k_r^+  \right)}
           {\sum_{j=1}^d \bar{\sigma}_j \left( k_j^- + \sum_{r=1}^d k_r^+  \right)},
\end{align}
where $\bar{\sigma}_i$ is the limiting proportion of monomer $M_i$ as given in Theorem~\ref{thm:proportion} and $R_{ij}$ is the ratio defined in \eqref{ratio_d_types}.
\end{proposition}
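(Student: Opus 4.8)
The plan is to work with the expected occupation counts $\mathbb{E}_{\root}[\chi_i(n)] = \sum_{x \in \mT} G_n(\root, xM_i)$, where $G_n(\root,y) := \sum_{m=0}^n p^{(m)}(\root,y)$ is the truncated Green's function and $\chi_i$ is as in \eqref{eq:cone_counts}, and to extract the growth rate of these sums in $n$. First I would dispose of the outer equality: every one of the states $Z_0,\dots,Z_n$ is either the root or carries exactly one cone type, so $\sum_{i=1}^d \chi_i(n) + \chi_{\root}(n) = n+1$, while $\mathbb{E}_{\root}[\chi_{\root}(n)] = \sum_{m=0}^n p^{(m)}(\root,\root) \to G(\root,\root) < \infty$ by transience. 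Hence $\tfrac1n \sum_{j=1}^d \mathbb{E}_{\root}[\chi_j(n)] \to 1$, so once the ratio $\mathbb{E}_{\root}[\chi_i(n)]/\sum_j \mathbb{E}_{\root}[\chi_j(n)]$ is shown to converge, the quotient by $n$ converges to the same value. The real content is therefore this ratio, and the difficulty is that both numerator and denominator diverge (the transient chain visits infinitely many cone-$C_i$ states), so we face an $\infty/\infty$ form; moreover the identity $G(\root, xM_j) = R_{ij}\,G(\root, xM_i)$ behind \eqref{ratio_d_types} holds only for the full Green's functions, not for the truncations $G_n$, so one cannot simply cancel.

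To expose a common divergent factor I would use a last-exit decomposition at the parent $x$. Because the graph is a tree, any trajectory sitting at $xM_i$ at time $m$ has a well-defined last visit to $x$, say at time $m'<m$, after which it must step directly into $xM_i$ and remain inside the subtree $\mT_{xM_i}$ through time $m$. Writing $D(x)$ for the total exit denominator at $x$ (so that $p(x,xM_i)=k_i^+/D(x)$, with $D(\root)=\sum_r k_r^+$ and $D(x)=k_{c}^-+\sum_r k_r^+$ when the terminal monomer of $x$ is $M_c$), this yields the exact identity
\[
p^{(m)}(\root, xM_i) = \frac{k_i^+}{D(x)}\sum_{m'=0}^{m-1} p^{(m')}(\root, x)\,\psi_i^{(m-1-m')},
\]
where $\psi_i^{(s)}$ is the probability, started from $xM_i$, of being at $xM_i$ after $s$ steps without ever stepping down to $x$; by self-similarity of the cones, $\psi_i^{(s)}$ depends only on $i$. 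Summing over all $x$ and all $m\le n$ (every term is nonnegative, so Tonelli justifies the interchange) and reindexing gives
\[
\mathbb{E}_{\root}[\chi_i(n)] = k_i^+ \sum_{m'=0}^{n-1} \phi(m')\,\Psi_i^{(n-1-m')}, \qquad \Psi_i^{(r)} := \sum_{s=0}^r \psi_i^{(s)},
\]
where $\phi(m') := \sum_x p^{(m')}(\root,x)/D(x)$ is a single sequence that does not depend on $i$ and carries all of the divergence.

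Two inputs then finish the argument. First, $\gamma_i := \lim_{r\to\infty}\Psi_i^{(r)} = \sum_{s\ge0}\psi_i^{(s)}$ is the expected number of visits to $xM_i$ before detachment; a one-step geometric computation with the return probabilities $F_\ell$ gives $\gamma_i = (k_i^- + \sum_r k_r^+)/(k_i^- + m)$, with $m = \sum_\ell k_\ell^+(1-F_\ell)$ the constant of Proposition \ref{F_i solutions}, so that $k_i^+\gamma_i = (k_i^-+\sum_r k_r^+)\,\bar\sigma_i$ using $\bar\sigma_i = k_i^+/(m+k_i^-)$. Second, since $D(x)$ lies between $\sum_r k_r^+$ and $\max_\ell k_\ell^- + \sum_r k_r^+$ and $\sum_x p^{(m')}(\root,x)=1$, the sequence $\phi$ is bounded above and below by positive constants, so $\Phi(n):=\sum_{m'=0}^{n-1}\phi(m')$ satisfies $\Phi(n)\asymp n$. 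Writing $\Psi_i^{(r)} = \gamma_i - \epsilon_i(r)$ with $\epsilon_i(r)\downarrow 0$ splits the convolution as $\mathbb{E}_{\root}[\chi_i(n)] = k_i^+\gamma_i\,\Phi(n) - k_i^+\delta_i(n)$; bounding $\delta_i(n) \le (\sum_r k_r^+)^{-1}\sum_{r=0}^{n-1}\epsilon_i(r)$ and using $\Phi(n)\gtrsim n$ together with the Ces\`{a}ro convergence $\tfrac1n\sum_{r<n}\epsilon_i(r)\to0$ shows $\delta_i(n)/\Phi(n)\to0$. Dividing numerator and denominator of the ratio by the shared factor $\Phi(n)$ now gives
\[
\frac{\mathbb{E}_{\root}[\chi_i(n)]}{\sum_{j}\mathbb{E}_{\root}[\chi_j(n)]} \longrightarrow \frac{k_i^+\gamma_i}{\sum_{j} k_j^+\gamma_j} = \frac{(k_i^-+\sum_r k_r^+)\,\bar\sigma_i}{\sum_{j}(k_j^-+\sum_r k_r^+)\,\bar\sigma_j},
\]
which equals $1/\sum_j R_{ij}$ after substituting \eqref{ratio_d_types}.

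I expect the main obstacle to be the last-exit convolution identity and, with it, the isolation of the single divergent sequence $\phi$: this is precisely the step that converts the indeterminate $\infty/\infty$ ratio into one whose numerator and denominator share the factor $\Phi(n)$, after which cancellation and a soft Ces\`{a}ro estimate suffice. A closely related but slightly heavier alternative is to pass to the resolvents $G^z(\root,\cdot)$, establish $\sum_x G^z(\root,xM_i) = z\,k_i^+\,\Psi_i(z)\sum_x G^z(\root,x)/D(x)$ so that the $i$-independent factor cancels in the ratio as $z\uparrow 1$, and then invoke a Hardy--Littlewood--Karamata Tauberian theorem; I would prefer the direct convolution route, since it avoids assuming any regular variation of $\phi$.
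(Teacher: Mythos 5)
Your proposal is correct, and it takes a genuinely different route from the paper's own proof. The paper reuses its reversibility-based identity $R_{ij}=G(\root,xM_j)/G(\root,xM_i)$ (valid for every $x$) and then splits the truncated ratio $\mathbb{E}_{\root}[\chi_j(n)]/\mathbb{E}_{\root}[\chi_i(n)]$ into three factors: two truncated-to-full Green's function ratios, asserted to tend to $1$ by monotone convergence, sandwiching the ratio of full sums identified with $R_{ij}$. You instead prove an exact convolution identity via a last-exit decomposition at the parent, $\mathbb{E}_{\root}[\chi_i(n)]=k_i^+\sum_{m'=0}^{n-1}\phi(m')\,\Psi_i^{(n-1-m')}$, isolate the $i$-independent divergent factor $\Phi(n)\asymp n$, and finish with a Ces\`aro estimate; the constant $\gamma_i=\lim_r\Psi_i^{(r)}$ is computed by a strong-Markov geometric argument, and the algebra $k_i^+\gamma_i=\bar\sigma_i\bigl(k_i^-+\sum_r k_r^+\bigr)$ recovers the stated formula without invoking the reversibility proposition at all. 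The comparison is instructive on the issue you flagged at the outset: in the transient regime $\sum_x G(\root,xM_i)=\lim_n\mathbb{E}_{\root}[\chi_i(n)]=\infty$, so the paper's outer factors are of the form finite$/\infty$ and $\infty/$finite, and its middle factor is $\infty/\infty$; pointwise proportionality of the limits $G(\root,xM_j)=R_{ij}G(\root,xM_i)$ does not by itself control a ratio of divergent sums (one can build monotone counterexamples where the ratio of partial sums converges to something other than the termwise constant), so the paper's monotone-convergence step is at best formal shorthand. Your convolution identity supplies exactly the uniformity in $x$ that makes the cancellation legitimate. What the paper's route buys is brevity and reuse of the already-established $R_{ij}$; what yours buys is a self-contained and fully rigorous argument whose only inputs are the tree structure (for the last-exit decomposition and the cone-type invariance of $\psi_i^{(s)}$), Tonelli, and Ces\`aro averaging--and it would serve as a clean repair of the weak step in the published proof.
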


\begin{proof}
From the definition of $\{\chi_j(n)\}_{j \in \{1,2,\dots,d\}}$ in \eqref{eq:cone_counts}, we have
\begin{align*}
    \sum_{j=1}^d \chi_j(n) + \chi_\root(n) = n + 1.
\end{align*}
From \cite[Section 2]{Nagnibeda2002RWOT},
\begin{align*}
    \lim_{n\to \infty} \frac{\mathbb{E}_{\root}[\chi_\root(n)]}{n} = 0.
\end{align*}
Consequently, 
\begin{align*}
    \lim_{n \to \infty} \sum_{j=1}^d \frac{\mathbb{E}_{\root}[\chi_j(n)]}{n} 
    = \lim_{n \to \infty} \frac{n+1-\mathbb{E}_{\root}[\chi_\root(n)]}{n} = 1,
\end{align*}
and hence 
\begin{align*}
     \lim_{n \to \infty} \frac{\mathbb{E}_{\root}[\chi_i(n)]}{n}
     = \lim_{n \to \infty} \frac{\mathbb{E}_{\root}[\chi_i(n)]}{\sum_{j=1}^d \mathbb{E}_{\root}[\chi_j(n)]}.
\end{align*}
This establishes the first equality in \eqref{1/1+R} and the last equality comes from \eqref{ratio_d_types}. It remains to prove the second equality.

According to \eqref{ratio_d_types}, we obtain for each $i,j \in \{1,\dots,d\}$, 
\begin{align*}
    R_{ij} 
    &= 
       \frac{\sum_{x \in \mT} \frac{G(\root, x M_j)}{G(\root, x M_i)} \cdot G(\root, x M_i)}
            {\sum_{x \in \mT} G(\root, x M_i)} = 
       \frac{\sum_{x \in \mT} G(\root, x M_j)}
            {\sum_{x \in \mT} G(\root, x M_i)} =
      \frac{\sum_{x \in \mT} \sum_{\ell=0}^{\infty} p^{(\ell)}(\root, x M_j)}
            {\sum_{x \in \mT} \sum_{\ell=0}^{\infty} p^{(\ell)}(\root, x M_i)} .
\end{align*}
From \eqref{eq:cone_counts}, we have for any $i,j \in \{1,\dots,d\}$,
\begin{align*}
    &\lim_{n \to \infty} \frac{\mathbb{E}_{\root}[\chi_j(n)]}{\mathbb{E}_{\root}[\chi_i(n)]}
    = \lim_{n \to \infty} 
       \frac{\sum_{x \in \mT} \sum_{\ell=0}^{n} p^{(\ell)}(\root, x M_j)}
            {\sum_{x \in \mT} \sum_{\ell=0}^{n} p^{(\ell)}(\root, x M_i)} \\
    &= \lim_{n \to \infty} 
       \frac{\sum_{x \in \mT} \sum_{\ell=0}^{n} p^{(\ell)}(\root, x M_j)}
            {\sum_{x \in \mT} \sum_{\ell=0}^{\infty} p^{(\ell)}(\root, x M_j)} \cdot 
       \frac{\sum_{x \in \mT} \sum_{\ell=0}^{\infty} p^{(\ell)}(\root, x M_j)}
            {\sum_{x \in \mT} \sum_{\ell=0}^{\infty} p^{(\ell)}(\root, x M_i)} \cdot
       \frac{\sum_{x \in \mT} \sum_{\ell=0}^{\infty} p^{(\ell)}(\root, x M_i)}
            {\sum_{x \in \mT} \sum_{\ell=0}^{n} p^{(\ell)}(\root, x M_i)} .
\end{align*}
For the first term and last term, we have the following from the monotone convergence theorem:
\begin{align*}
     &\lim_{n \to \infty} \frac{\sum_{x \in \mT} \sum_{\ell=0}^{n} p^{(\ell)}(\root, x M_j)}
            {\sum_{x \in \mT} \sum_{\ell=0}^{\infty} p^{(\ell)}(\root, x M_j)}  =1 \qquad \text{and}\qquad
     \lim_{n \to \infty} \frac{\sum_{x \in \mT} \sum_{\ell=0}^{\infty} p^{(\ell)}(\root, x M_i)}
            {\sum_{x \in \mT} \sum_{\ell=0}^{n} p^{(\ell)}(\root, x M_i)}  =1.
\end{align*}
Recognizing that the middle term is simply $R_{ij}$,  we have
\begin{align*}
    \lim_{n \to \infty} \frac{\mathbb{E}_{\root}[\chi_j(n)]}{\mathbb{E}_{\root}[\chi_i(n)]}
    = R_{ij}.
\end{align*}
Finally, 
\begin{align*}
    \lim_{n \to \infty} \frac{\mathbb{E}_{\root}[\chi_i(n)]}{\sum_{j=1}^d \mathbb{E}_{\root}[\chi_j(n)]}
    = \lim_{n \to \infty} \frac{1}{\sum_{j =1}^d \frac{\mathbb{E}_{\root}[\chi_j(n)]}{\mathbb{E}_{\root}[\chi_i(n)]}}
    = \frac{1}{ \sum_{j =1}^d R_{ij}},
\end{align*}
which completes the proof.
\end{proof}

\subsubsection{Proof that $\lim_{n\to \infty} \frac{\chi_i(n)}{n}$ exists almost surely}
With the value $\lim_{n \to \infty} \frac{\mathbb{E}_{\root}[\chi_i(n)]}{n} 
    = \frac{1}{ \sum_{j =1}^d  R_{ij}}$ in hand, it remains to show that $\lim_{n\to \infty} \frac{\chi_i(n)}{n}$ exists for each $i\in \{1,\dots, d\}$. To that end, we now introduce the following notation for $x, y \in \mT$, $x \neq y$:
\begin{align}\label{S(x,y)}
\begin{split}
    &S_i^{(n)}(x, y) = \sum_{s=n}^{\infty} P_x\left( Z_s = y,\, Z_l \neq y \ \text{for } 0 \le l < s,\, \sum_{r=0}^{s-1} \mathbf{1}_{\left(C(Z_r) = C_i\right)} = n \right),\\
    &S_i(x, y) = \sum_{n=0}^{\infty} S_i^{(n)}(x, y), \\
    &S_i^{\prime}(x, y) = \sum_{n=0}^{\infty} n\, S_i^{(n)}(x, y).
\end{split}
\end{align}

\begin{remark}  
$S_i^{(n)}(x, y)$ denotes the probability that the process $Z$ visits $y$ for the \emph{first} time after at least $n$ steps, having visited polymers ending with $M_i$ exactly $n$ times before arriving at $y$.
\end{remark}

Similarly, we define
\begin{align}\label{T(x,y)}
\begin{split}
    &T_i^{(n)}(x, y) = \sum_{s=n}^{\infty} P_x\left( Z_s = y,\, \sum_{r=1}^{s} \mathbf{1}_{\left(C(Z_r) = C_i\right)} = n \right),\\
    &T_i(x, y) = \sum_{n=0}^{\infty} T_i^{(n)}(x, y).
\end{split}
\end{align}

\begin{remark} 
$T_i^{(n)}(x, y)$ is the probability that the process $Z$ reaches $y$ after at least $n$ steps, having visited polymers ending with $M_i$ exactly $n$ times \emph{before or at} $Z_s=y$ (excluding the starting state $x$).
\end{remark}

With these definitions, we can now observe the following relationships:
\begin{proposition}
\label{prop:768768}
For any $x, y \in \mT$, $x \neq y$, $S_i(x, y)=F(x, y)< 1$ \ for all $i \in \{1, \dots, d\}$.
\end{proposition}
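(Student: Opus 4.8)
The plan is to establish the two assertions separately: first the identity $S_i(x,y)=F(x,y)$, which is the substantive part and where I expect to spend the effort, and then the strict bound $F(x,y)<1$, which will follow from transience. For the identity I would work directly from the definition of $S_i$ in \eqref{S(x,y)}, writing
\begin{align*}
S_i(x,y)=\sum_{n=0}^{\infty}\sum_{s=n}^{\infty}P_x\left( Z_s = y,\, Z_l \neq y \ \text{for } 0 \le l < s,\, \sum_{r=0}^{s-1} \mathbf{1}_{\left(C(Z_r) = C_i\right)} = n \right).
\end{align*}
Every summand is a probability, hence nonnegative, so Tonelli's theorem permits interchanging the order of summation. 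Since the count $\sum_{r=0}^{s-1}\mathbf{1}_{\left(C(Z_r)=C_i\right)}$ necessarily lies in $\{0,1,\dots,s\}$ for a trajectory observed through time $s$, reindexing by the first-passage time $s$ gives
\begin{align*}
S_i(x,y)=\sum_{s=0}^{\infty}\sum_{n=0}^{s}P_x\left( Z_s = y,\, Z_l \neq y \ \text{for } 0 \le l < s,\, \sum_{r=0}^{s-1} \mathbf{1}_{\left(C(Z_r) = C_i\right)} = n \right).
\end{align*}

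The crux is that, for each fixed $s$, the events indexed by $n$ form a partition of the first-passage event $\{Z_s=y,\ Z_l\neq y\text{ for }0\le l<s\}$: each trajectory that first hits $y$ at step $s$ realizes exactly one value of the cone-type count. Summing over $n$ therefore erases the count constraint entirely, and because $Z_0=x\neq y$ the restriction at $l=0$ is vacuous. Hence the inner sum equals $f^{(s)}(x,y)$, the probability that $Z$ first enters $y$ at step $s$, and summing over $s$ recovers $S_i(x,y)=\sum_{s\ge 0}f^{(s)}(x,y)=F(x,y)$ by the definition in \eqref{Sec4:F}. I note in passing that the value is independent of $i$, which serves as a useful consistency check.

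For the strict inequality I would argue by transience. Given $x\neq y$, I would exhibit a child $xM_j$ of $x$ whose subtree does not contain $y$; such a child always exists when $y$ is not a strict descendant of $x$—covering the ancestor targets $y=x^-$ and $y=\root$ relevant to the application, where any child works—and exists for arbitrary $y$ as soon as $d\ge 2$ (choose a child off the unique path toward $y$). With positive probability $p(x,xM_j)\,(1-F_j)>0$ the chain steps into this subtree and then never returns to $x$, using that $F_j=F(xM_j,x)<1$ as recorded after \eqref{eq:F_i_sec4}. On this event the chain stays in that subtree forever and so never reaches $y$, whence $F(x,y)\le 1-p(x,xM_j)(1-F_j)<1$.

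I expect the main obstacle to be purely organizational rather than conceptual: the identity is essentially bookkeeping once one recognizes that summing over the cone-count $n$ collapses the first-passage decomposition, so the care lies in handling the double sum and the vacuous $l=0$ constraint cleanly. The strict-inequality step is short but genuinely needs the escape argument above, since $F(x,y)\le 1$ holds trivially and the transient drift must be invoked to rule out equality.
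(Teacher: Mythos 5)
Your proof of the identity $S_i(x,y)=F(x,y)$ is essentially the paper's own argument run in the opposite direction: the paper starts from $F(x,y)$, partitions the first-passage event according to the value $n$ of the cone-type count $\sum_{r=0}^{s-1}\mathbf{1}_{(C(Z_r)=C_i)}$, and interchanges the two sums (justified by nonnegativity of the summands) to arrive at $S_i(x,y)$; you start from $S_i(x,y)$, apply the same Tonelli interchange, and collapse the same partition. That part matches in substance. Where you genuinely differ is the strict inequality $F(x,y)<1$: the paper disposes of it in one sentence (``Since $Z$ is transient, and because our state space is a tree, we have $F(x,y)<1$''), whereas you give an actual escape argument---step into a child subtree $T_{xM_j}$ not containing $y$ and never return, an event of probability $p(x,xM_j)\,(1-F_j)>0$, which by the tree structure traps the chain away from $y$ and gives $F(x,y)\le 1-p(x,xM_j)(1-F_j)<1$. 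Your extra care here is not wasted: it exposes the fact that the blanket claim, as stated for all $x\neq y$, actually fails when $d=1$ and $y$ is a strict descendant of $x$, since the state space is then a half-line and a transient chain moving by steps of $\pm 1$ toward infinity visits every longer state with probability one, so $F(x,y)=1$ there. Your stated hypotheses---$y$ not a strict descendant of $x$, or $d\ge 2$---are exactly what the escape argument needs, and they cover the only way the proposition is invoked later (with $y=x^-$, as in \eqref{S_i_def_general}), so your proof is correct precisely where the statement is true and is, in this respect, more careful than the paper's.
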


\begin{proof} 
Let $x,y\in \mT$ with $x\ne y$.  Since $Z$ is transient, and because our state space is a tree, we have $F(x,y)<1$. Moreover, a straightforward calculation yields
\begin{align*}
    F(x,y) &= P_x(\text{$Z$ eventually hits $y$ in finite time})= \sum_{s=0}^\infty P_x(Z_s = y, Z_l \ne y \text{ for } 0 \leq l < s)\\
    &= \sum_{s=0}^\infty \sum_{n=0}^\infty P_x\left(Z_s = y, Z_l \ne y \text{ for } 0\leq l < s, \sum_{r=0}^{s-1} \mathbf{1}_{\left(C\left(Z_r\right)=C_i\right)}=n\right) \\
    &= \sum_{n=0}^\infty \sum_{s=0}^\infty P_x\left(Z_s = y, Z_l \ne y \text{ for } 0\leq l < s, \sum_{r=0}^{s-1} \mathbf{1}_{\left(C\left(Z_r\right)=C_i\right)}=n\right) \\
    &= \sum_{n=0}^\infty \sum_{s=n}^\infty P_x\left(Z_s = y, Z_l \ne y \text{ for } 0\leq l < s, \sum_{r=0}^{s-1} \mathbf{1}_{\left(C\left(Z_r\right)=C_i\right)}=n\right) \\
    &= \sum_{n=0}^\infty S_i^{(n)}(x,y) =S_i(x,y).
\end{align*}
\end{proof}

\begin{proposition}
For any $x, y \in \mT$, $x \neq y$, $T_i(x, y)=G(x, y)< \infty$ \ for all $i \in \{1, \dots, d\}$.
\end{proposition}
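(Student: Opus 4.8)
The plan is to mirror the proof of Proposition~\ref{prop:768768} (the $S_i$ identity), but replacing the first-passage decomposition with a plain occupation decomposition. The natural starting point is the definition
\begin{align*}
G(x,y) = \sum_{s=0}^{\infty} p^{(s)}(x,y) = \sum_{s=0}^{\infty} P_x(Z_s = y),
\end{align*}
together with the already-cited fact from \cite{Nagnibeda2002RWOT} that $G(x,y) < \infty$ for all $x,y \in \mT$. The finiteness assertion is therefore immediate once the identity $T_i(x,y) = G(x,y)$ is established, so the real content of the proposition is this identity.

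First I would partition each event $\{Z_s = y\}$ according to the number of visits to cone type $C_i$ recorded along $Z_1, \dots, Z_s$. Since $\sum_{r=1}^{s} \mathbf{1}_{(C(Z_r) = C_i)}$ takes values in $\{0,1,\dots,s\}$, this gives, for each fixed $s$,
\begin{align*}
P_x(Z_s = y) = \sum_{n=0}^{s} P_x\!\left(Z_s = y, \ \sum_{r=1}^{s} \mathbf{1}_{(C(Z_r) = C_i)} = n\right),
\end{align*}
and summing over $s \ge 0$ expresses $G(x,y)$ as a double series with nonnegative terms.

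Next I would interchange the order of summation. Because every summand is a probability, hence nonnegative, Tonelli's theorem justifies the swap with no additional convergence hypothesis. The only bookkeeping point is the index range: the constraint $\sum_{r=1}^{s} \mathbf{1}_{(C(Z_r)=C_i)} = n$ forces $n \le s$, since one cannot record $n$ cone-type visits in fewer than $n$ steps; thus after swapping the inner sum runs over $s \ge n$. This yields exactly $\sum_{n=0}^{\infty} \sum_{s=n}^{\infty} P_x(\,\cdots\,) = \sum_{n=0}^{\infty} T_i^{(n)}(x,y) = T_i(x,y)$, which is the desired identity, and finiteness then follows from $G(x,y) < \infty$.

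I do not anticipate a genuine obstacle here; the argument is a direct analogue of the $S_i$ computation with the ``first visit to $y$'' restriction removed (so the terminal $\{Z_s = y\}$ event is no longer required to be a first passage). The one point meriting care is the index convention in the definition of $T_i^{(n)}$, where the cone-type count runs over $r = 1, \dots, s$ (including the terminal state and excluding the start $x$), in contrast to the $r = 0, \dots, s-1$ convention used for $S_i^{(n)}$ in Proposition~\ref{prop:768768}; one must keep this offset consistent so that the decomposition of $\{Z_s = y\}$ aligns precisely with the summand appearing in $T_i^{(n)}$.
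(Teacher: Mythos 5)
Your proposal is correct and follows essentially the same route as the paper: decompose each event $\{Z_s = y\}$ according to the cone-type visit count $\sum_{r=1}^{s} \mathbf{1}_{(C(Z_r)=C_i)}$, interchange the two nonnegative sums so the inner index runs over $s \ge n$, and identify the result with $\sum_{n=0}^{\infty} T_i^{(n)}(x,y)$, with finiteness supplied by transience via $G(x,y) < \infty$. Your explicit appeal to Tonelli and the remark on the $r=1,\dots,s$ versus $r=0,\dots,s-1$ indexing are just careful spellings-out of steps the paper leaves implicit.
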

\begin{proof}
Since $Z$ is transient, $G(x,y)<\infty$ for any $x, y \in \mT$. By a similar argument as in the proof of Proposition \ref{prop:768768} above, for any $x, y \in \mT$, $i \in \{1, \dots, d\}$,
\begin{align*}
    G(x, y)&=\sum_{s=0}^{\infty} p^{(s)}(x, y)=\sum_{s=0}^{\infty} P_x\left(Z_s=y\right)
    =\sum_{n=0}^{\infty}\sum_{s=n}^{\infty} P_x\left(Z_s=y, \sum_{r=1}^s \mathbf{1}_{\left(C\left(Z_r\right)=C_i\right)}=n\right)\\
    &=\sum_{n=0}^{\infty}T_i^{(n)}(x, y) =T_i(x, y).
\end{align*}
\end{proof}

\begin{proposition}
\label{njfdvhjfdv}
    For  $x, y \in \mT$, with $x \neq y$, let  $F^{\prime}(x, y)=\sum_{n=0}^{\infty} n f^{(n)}(x, y)$.  Then, $S_i^{\prime}(x, y) \leq F'(x,y)$ \ for all $i \in \{1, \dots, d\}$.
\end{proposition}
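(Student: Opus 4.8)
The plan is to observe that $F'(x,y)$ and $S_i'(x,y)$ are weighted sums over the \emph{same} array of first-passage probabilities, the two sums differing only in the weight attached to each term, and then to compare those weights termwise. For $x \neq y$, introduce for each $s \ge 0$ and $n \ge 0$ the joint probability
\begin{align*}
    g_i^{(s,n)}(x,y) := P_x\Big( Z_s = y,\ Z_l \neq y \text{ for } 0 \le l < s,\ \sum_{r=0}^{s-1} \mathbf{1}_{(C(Z_r) = C_i)} = n \Big),
\end{align*}
which is the probability that the first visit to $y$ occurs at step $s$ and that exactly $n$ of the states $Z_0,\dots,Z_{s-1}$ have cone type $C_i$. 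Since $Z_0,\dots,Z_{s-1}$ is a list of only $s$ states, the number of $C_i$-states among them is at most $s$, so $g_i^{(s,n)}(x,y) = 0$ whenever $n > s$; this boundedness of $n$ by $s$ is the one fact that drives the whole argument.

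First I would record two elementary decompositions. Summing $g_i^{(s,n)}$ over the number of marks $n$ recovers the first-passage probability, exactly as in the proof of Proposition~\ref{prop:768768}, namely $f^{(s)}(x,y) = \sum_{n=0}^{s} g_i^{(s,n)}(x,y)$; and comparing with the definition of $S_i^{(n)}$ in \eqref{S(x,y)} shows $S_i^{(n)}(x,y) = \sum_{s=n}^{\infty} g_i^{(s,n)}(x,y)$. These two identities express both $F'$ and $S_i'$ in terms of the single non-negative array $\{g_i^{(s,n)}(x,y)\}$.

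Next I would substitute these into the definitions and interchange the two non-negative summations by Tonelli's theorem, which requires no finiteness hypothesis. This gives
\begin{align*}
    S_i'(x,y) = \sum_{n=0}^\infty n \sum_{s=n}^\infty g_i^{(s,n)}(x,y) = \sum_{s=0}^\infty \sum_{n=0}^{s} n\, g_i^{(s,n)}(x,y),
\end{align*}
whereas
\begin{align*}
    F'(x,y) = \sum_{s=0}^\infty s\, f^{(s)}(x,y) = \sum_{s=0}^\infty \sum_{n=0}^{s} s\, g_i^{(s,n)}(x,y).
\end{align*}
The two right-hand sides are identical except that each term carries the weight $n$ in the first and $s$ in the second. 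Since the inner sum runs only over $0 \le n \le s$ and each $g_i^{(s,n)}(x,y) \ge 0$, we have $n\, g_i^{(s,n)}(x,y) \le s\, g_i^{(s,n)}(x,y)$ termwise, and summing yields $S_i'(x,y) \le F'(x,y)$, as claimed.

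I do not expect a genuine analytic obstacle here: the only thing requiring care is the bookkeeping of matching the index ranges of the two double sums and justifying the reordering, both of which are immediate from non-negativity. The substance of the statement is the transparent probabilistic fact that, writing $\tau_y$ for the first hitting time of $y$, the number of cone-type-$C_i$ states visited strictly before $\tau_y$ cannot exceed $\tau_y$ itself. Indeed one has $S_i'(x,y) = \mathbb{E}_x[\,\#\{0 \le r < \tau_y : C(Z_r)=C_i\};\, \tau_y < \infty]$ and $F'(x,y) = \mathbb{E}_x[\tau_y;\, \tau_y < \infty]$, so the pointwise domination of the integrand gives the inequality directly. I would present the summation version above for consistency with the surrounding generating-function computations, while noting this interpretation as the underlying intuition.
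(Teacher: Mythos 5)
Your proposal is correct and follows essentially the same route as the paper's proof: both expand $S_i'(x,y)$ via the definition of $S_i^{(n)}$, interchange the two non-negative sums, bound the weight $n$ by $s$ termwise over the range $0 \le n \le s$, and then resum over $n$ to recover $\sum_s s\, f^{(s)}(x,y) = F'(x,y)$. The only cosmetic differences are that you name the joint array $g_i^{(s,n)}$ and cite Tonelli explicitly, whereas the paper carries the probability expression through each line.
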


\begin{proof}
    Let $x, y \in \mT$, with $x \neq y$.  For $i \in \{1, \dots, d\}$,
    \begin{align*}
        S_i^{\prime}(x, y) &=  \sum_{n=0}^{\infty} n S_i^{(n)}(x,y) = \sum_{n=0}^{\infty} n \sum_{s=n}^{\infty} P_x \left( Z_s = y, \ Z_l \neq y \text{ for } 0 \le l < s, \sum_{r=0}^{s-1} \mathbf{1}_{\left(C\left(Z_r\right)=C_i\right)}=n \right) \\
        &= \sum_{n=0}^{\infty}  \sum_{s=n}^{\infty} n P_x \left( Z_s = y, \ Z_l \neq y \text{ for } 0 \le l < s, \sum_{r=0}^{s-1} \mathbf{1}_{\left(C\left(Z_r\right)=C_i\right)}=n \right) \\
        &= \sum_{s=0}^{\infty} \sum_{n=0}^{s} n P_x \left( Z_s = y, \ Z_l \neq y \text{ for } 0 \le l < s, \sum_{r=0}^{s-1} \mathbf{1}_{\left(C\left(Z_r\right)=C_i\right)}=n \right) \\
        &\leq \sum_{s=0}^{\infty} \sum_{n=0}^{s} s P_x \left( Z_s = y, \ Z_l \neq y \text{ for } 0 \le l < s, \sum_{r=0}^{s-1} \mathbf{1}_{\left(C\left(Z_r\right)=C_i\right)}=n \right) \\
        &= \sum_{s=0}^{\infty} s \sum_{n=0}^{s}  P_x \left( Z_s = y, \ Z_l \neq y \text{ for } 0 \le l < s, \sum_{r=0}^{s-1} \mathbf{1}_{\left(C\left(Z_r\right)=C_i\right)}=n \right) \\
        &= \sum_{s=0}^{\infty} s P_x \left( Z_s = y, \ Z_l \neq y \text{ for } 0 \le l < s \right) = \sum_{s=0}^{\infty} s f^{(s)}(x,y) = F^{\prime}(x,y).\qedhere
    \end{align*}
\end{proof}

\begin{remark}
For any $x \in \mT \backslash \{\root\}$, the quantity $S_i^{(n)}(x, x^-)$  only depends upon the path of the process up to and including the first hitting time of $x^-$ when starting from $x$.  
By the tree structure of our process, all transitions  occur along the edges inside the subtree $T_x$.
Once the cone type of $T_x$ is known, the transition probabilities along these edges are fully determined, and hence the probability $S_i^{(n)}(x, x^-)$ is also determined.  
From this, we see that for each $j \in \{1,\dots,d\}$, all values $S_i^{(n)}(x M_j, x)$ are identical.  
Therefore, $S_i^{(n)}(x, x^-)$ depends only on  $n$, $i$ and cone type of $T_x$.  
In particular, for all $x \in \mT$ and for each $i, j \in \{1,\dots,d\}$, we have
\begin{align}\label{S_i_def_general}
\begin{split}
    &0<S_i^j := S_i(x M_j, x) = F(x M_j, x) = F_j < 1, \\
    &0<(S_i^j)^{\prime} := S_i^{\prime}(x M_j, x) \leq F_j^{\prime} :=F^{\prime}(x M_j, x) < \infty,
\end{split}
\end{align}
where the bound $F^{\prime}(x M_j, x) < \infty$ follows from \cite[Lemma 9.98]{woess2009denumerable}.
\end{remark}

With the quantities $S_i^{(n)}(x,y)$ defined in \eqref{S(x,y)}, we can now describe the transition mechanism of the process $\left(W_k,\chi_i(e_k)\right)_{k \geq 1}$.  This is one of our main results.  We note that this result is similar to Proposition 9.55 in \cite{woess2009denumerable} where it was shown that $(W_k, e_k)$ is a Markov chain. Here we are studying $\left(W_k,\chi_i(e_k)\right)$.  This difference is subtle but critical.

\begin{proposition}\label{transition prob. d}
    The process $\left(W_k,\chi_i(e_k)\right)_{k \geq 1}$ is a Markov chain for each $i \in \{1, \dots, d\}$. 
    In particular, for $x, y \in \mT$ with $|x| = k \geq 1$ and $y^{-} = x$ (so $|y| = k+1$), and $m, n \in \mathbb{Z}_{n \geq 0}$ with $n \geq m$, the transition probability is
    \begin{align*}
        P_\root&\left(W_{k+1} = y,\, \chi_i(e_{k+1}) = n \,\middle|\, W_k = x,\, \chi_i(e_k) = m\right) \\
        & = \frac{p(x, y)}{p\left(x, x^{-}\right)}
        \left( \frac{F\left(x, x^{-}\right)}{1 - F\left(x, x^{-}\right)} \right)
        \left( \frac{1 - F(y, x)}{F(y, x)} \right)
        S_i^{(n-m)}(y, x),
    \end{align*}
    where $S_i^{(n-m)}(y, x)$ is defined at \eqref{S(x,y)}:
    \begin{align*}
        S_i^{(n-m)}(y, x) 
        &= \sum_{s = n-m}^{\infty} 
        P_y\left(
            Z_s = x,\ 
            Z_l \neq x \ \text{ for } 0 \leq l < s,\ 
            \sum_{r = 0}^{s-1} \mathbf{1}_{\left(C(Z_r) = C_i\right)} = n - m
        \right).
    \end{align*}
\end{proposition}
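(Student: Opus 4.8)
The plan is to mirror the proof that the time-augmented boundary process $(W_k,e_k)$ is a Markov chain (Proposition~9.55 in \cite{woess2009denumerable}), replacing the time increment $e_{k+1}-e_k$ by the cone-type count increment $\chi_i(e_{k+1})-\chi_i(e_k)$. The essential structural input is the tree geometry: on the event $\{W_k=x\}$, the trajectory of $Z$ after its last visit to level $k$ is confined to a single child subtree $T_y$ with $y=W_{k+1}$ a child of $x$, since any passage from $T_y$ back to the rest of the tree would cross the level-$k$ vertex $x$, contradicting that $e_k$ is the last level-$k$ visit. Consequently the increment $\nu := \chi_i(e_{k+1})-\chi_i(e_k)$ is a functional only of the segment $Z_{e_k},\dots,Z_{e_{k+1}}$ inside $\overline\mT_y$, and the additive relation $\chi_i(e_{k+1})=\chi_i(e_k)+\nu$ makes the Markov structure plausible once this segment is shown to be conditionally independent of the past given $W_k=x$.

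First I would establish the Markov property and the conditional independence. The difficulty is that $e_k$ and $e_{k+1}$ are not stopping times for $Z$, so the strong Markov property is unavailable. I would instead use a last-exit decomposition: fix a candidate last-exit time $t$, apply the ordinary Markov property at that deterministic time (conditioning on $Z_t=x$), attach the future ``never return to level $k$'' event, and then sum over $t$. This is exactly the device used for $(W_k,e_k)$ in \cite{woess2009denumerable}, and carrying the indicator $\mathbf 1_{C(Z_r)=C_i}$ through the decomposition is routine. The outcome is that, conditioned on $W_k=x$ and $W_{k+1}=y$, the law of $\nu$ depends only on the pair $(y,x)$ through the cone type of $y$; this yields both the Markov property and the dependence of the kernel on $x$ only through its cone type.

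The heart of the argument is identifying the law of $\nu$. I would show that, conditioned on $\{W_k=x,\,W_{k+1}=y\}$, the reversed segment $(Z_{e_{k+1}},Z_{e_{k+1}-1},\dots,Z_{e_k})=(y,\dots,y,x)$ is a first-passage path from $y$ to $x$: it starts at $y$, stays in $T_y$, and hits $x$ only at its final step, because in forward time $x$ is visited on $[e_k,e_{k+1}]$ only at $e_k$. Since cone types are intrinsic to vertices and hence invariant under reversal, the number of $C_i$-visits strictly before $x$ in the reversed path equals $\nu$. Applying the reversibility of $Z$ with respect to the measure $\mu$ of \eqref{measure} path-by-path, exactly as in \eqref{reversibility over paths} (Corollary~\ref{reversibility}), the weight of each forward segment from $x$ to $y$ equals $\tfrac{\mu(y)}{\mu(x)}$ times the weight of its reversal; the common escape factor after $e_{k+1}$ and the common factor $\mu(y)/\mu(x)$ cancel upon normalization, so the conditional law of the segment is the first-passage law normalized by its total mass $F(y,x)=\sum_\nu S_i^{(\nu)}(y,x)$ (Proposition~\ref{prop:768768}). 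Hence $P(\nu=n-m\mid W_k=x,W_{k+1}=y)=S_i^{(n-m)}(y,x)/F(y,x)$, with $S_i^{(\cdot)}$ as in \eqref{S(x,y)}.

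Finally I would assemble the pieces by multiplying the marginal transition of Theorem~\ref{thm:WisMC} by this conditional increment law, since
\[
F(x,x^-)\,\frac{1-F(y,x)}{1-F(x,x^-)}\,\frac{p(x,y)}{p(x,x^-)}\cdot\frac{S_i^{(n-m)}(y,x)}{F(y,x)}
=\frac{p(x,y)}{p(x,x^-)}\Bigl(\frac{F(x,x^-)}{1-F(x,x^-)}\Bigr)\Bigl(\frac{1-F(y,x)}{F(y,x)}\Bigr)S_i^{(n-m)}(y,x),
\]
which is the stated kernel. As a consistency check, summing over $n-m\ge 0$ and using $\sum_\nu S_i^{(\nu)}(y,x)=F(y,x)$ recovers the marginal transition of Theorem~\ref{thm:WisMC}, confirming that $(W_k,\chi_i(e_k))$ is a genuine refinement of the boundary process. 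I expect the main obstacle to be making the conditional independence in the last-exit decomposition rigorous — controlling the nested ``never return'' events at levels $k$ and $k+1$ simultaneously while keeping the cone-type count — rather than the reversal identity, which is a clean consequence of the reversibility already in hand.
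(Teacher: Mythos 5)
Your proposal is correct and follows essentially the same route as the paper's proof in Appendix \ref{AppendixProof1}: a last-exit decomposition using the ordinary Markov property at deterministic times (since the $e_k$ are not stopping times), followed by a path-by-path reversibility argument with respect to the measure $\mu$ of \eqref{measure}, exactly as in Corollary \ref{reversibility}, to identify the increment law with the first-passage quantity $S_i^{(n-m)}(y,x)$. The only difference is organizational: the paper computes the joint kernel directly as the ratio of $P_\root\left(W_{k+1}=y,\,\chi_i(e_{k+1})=n,\,W_k=x,\,\chi_i(e_k)=m\right)$ to $P_\root\left(W_k=x,\,\chi_i(e_k)=m\right)$, with the common factor $T_i^{(m)}(\root,x)$ cancelling, whereas you factor the kernel as the marginal transition of Theorem \ref{thm:WisMC} times the conditional increment law $S_i^{(n-m)}(y,x)/F(y,x)$ --- the same computation, rearranged.
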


We relegate the proof of Proposition \ref{transition prob. d} to Appendix \ref{AppendixProof1}.

Continuing, we denote the increment
\begin{align*}
    (\Delta_k)_i=\chi_i(e_{k})-\chi_i(e_{k-1}).
\end{align*}
We have the following proposition.
\begin{proposition}
    The process $\left(W_k, (\Delta_k)_i\right)_{k \geq 1}$ is a Markov chain.  In particular, for $x, y \in \mT$ with $|x|=k \geqslant 1$ and $y^{-}=x$ (so $|y| = k+1$), and $m, n \in \mathbb{Z}_{\geq 0}
$  , its transition probability is 
\begin{align*}
    &P_\root\left(W_{k+1}=y,\left(\Delta_{k+1}\right)_i=n \mid W_k=x,\left(\Delta_k\right)_i=m\right)\\
    &=\frac{p(x, y)}{p\left(x, x^{-}\right)}\left(\frac{F\left(x, x^{-}\right)}{1-F\left(x, x^{-}\right)}\right)\left(\frac{1-F(y, x)}{F(y, x)}\right) S_i^{(n)}(y, x).
\end{align*}
\end{proposition}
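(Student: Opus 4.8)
The plan is to deduce this proposition directly from Proposition \ref{transition prob. d}, exploiting the single structural feature that makes it work: the transition kernel there depends on the two count values only through their difference. Write $\mathcal{G}_k := \sigma\big((W_j,\chi_i(e_j)):0\le j\le k\big)$ for the natural filtration of the Markov chain $(W_k,\chi_i(e_k))$, and $\mathcal{H}_k := \sigma\big((W_j,(\Delta_j)_i):1\le j\le k\big)$ for that of $(W_k,(\Delta_k)_i)$. First I would record the inclusion $\mathcal{H}_k\subseteq\mathcal{G}_k$: since $(\Delta_j)_i=\chi_i(e_j)-\chi_i(e_{j-1})$, each increment is a $\mathcal{G}_k$-measurable function of the counts, and in particular $W_k$ is $\mathcal{H}_k$-measurable.

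Next I would compute the one-step conditional law relative to the larger filtration $\mathcal{G}_k$. On the event $\{W_k=x,\ \chi_i(e_k)=c\}$, the identity $(\Delta_{k+1})_i=\chi_i(e_{k+1})-\chi_i(e_k)$ shows that $\{(\Delta_{k+1})_i=n\}$ coincides with $\{\chi_i(e_{k+1})=c+n\}$. Hence, applying the Markov property of $(W_k,\chi_i(e_k))$ from Proposition \ref{transition prob. d} with new count value $c+n$ and old count value $c$,
\begin{align*}
    P_\root\big(W_{k+1}=y,(\Delta_{k+1})_i=n \,\big|\, \mathcal{G}_k\big)
    = \frac{p(x,y)}{p(x,x^-)}\left(\frac{F(x,x^-)}{1-F(x,x^-)}\right)\left(\frac{1-F(y,x)}{F(y,x)}\right) S_i^{(c+n-c)}(y,x),
\end{align*}
and the exponent collapses, $S_i^{(c+n-c)}(y,x)=S_i^{(n)}(y,x)$. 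The essential point is that the absolute count $c=\chi_i(e_k)$ has cancelled, so this conditional probability is a deterministic function of $W_k=x$ alone (and of $y,n$).

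Finally, since that function is $\sigma(W_k)$-measurable and $\sigma(W_k)\subseteq\mathcal{H}_k\subseteq\mathcal{G}_k$, the tower property gives, on $\{W_k=x\}$,
\begin{align*}
    P_\root\big(W_{k+1}=y,(\Delta_{k+1})_i=n \,\big|\, \mathcal{H}_k\big)
    = \E\big[\,P_\root(W_{k+1}=y,(\Delta_{k+1})_i=n\mid\mathcal{G}_k)\,\big|\,\mathcal{H}_k\big]
    = \frac{p(x,y)}{p(x,x^-)}\left(\frac{F(x,x^-)}{1-F(x,x^-)}\right)\left(\frac{1-F(y,x)}{F(y,x)}\right) S_i^{(n)}(y,x).
\end{align*}
Because the conditional law given the full history $\mathcal{H}_k$ is a function of the current coordinate $W_k$, and hence of the pair $(W_k,(\Delta_k)_i)$, the process $(W_k,(\Delta_k)_i)_{k\ge1}$ is Markov with precisely the asserted transition probability.

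I do not expect a genuine obstacle here; the one subtlety worth isolating is the translation invariance in the count coordinate that is already encoded in the kernel $S_i^{(n-m)}$ of Proposition \ref{transition prob. d}. Once that is made explicit, everything reduces to a routine conditioning argument requiring no new estimates. I would also emphasize, as a sanity check, that the resulting kernel does not depend on the previous increment $m=(\Delta_k)_i$ at all: the $(\Delta_k)_i$-coordinate is inert, which is consistent with---and slightly stronger than---the claimed Markov property, and explains why passing from the absolute counts $\chi_i(e_k)$ to the increments $(\Delta_k)_i$ preserves the Markov structure.
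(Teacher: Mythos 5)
Your proof is correct, and it rests on the same two pillars as the paper's: Proposition \ref{transition prob. d} and the observation that its kernel depends on the two count values only through their difference. The implementation, however, is genuinely different. The paper proves the statement by brute-force computation of the ratio of joint probabilities: it partitions both the numerator $P_\root(W_{k+1}=y,(\Delta_{k+1})_i=n, W_k=x,(\Delta_k)_i=m)$ and the denominator $P_\root(W_k=x,(\Delta_k)_i=m)$ over the hidden absolute count $\chi_i(e_{k-1})=l$, applies the Markov property of the bivariate chain $(W_k,\chi_i(e_k))$ to each summand, factors the (now $l$-independent and $m$-independent) kernel $S_i^{(n)}(y,x)$ out of the sum, and cancels. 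You instead work at the level of filtrations: you condition on the richer $\sigma$-algebra $\mathcal{G}_k$ generated by the absolute counts, observe that the resulting conditional law is a function of $W_k$ alone because the absolute count cancels, and then project down to $\mathcal{H}_k$ via the tower property. Your route buys two things: it is shorter (no explicit partition sums to manipulate), and it explicitly establishes the Markov property with respect to the \emph{full} history of $(W_j,(\Delta_j)_i)_{j\le k}$, which is what the assertion ``is a Markov chain'' strictly requires, whereas the paper's displayed computation only addresses conditioning on the current pair $(W_k,(\Delta_k)_i)$ (the extension to the full history is implicit there). Your closing remark that the kernel is independent of $m$ — so the increment coordinate is inert — is also consistent with the paper's final formula. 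The only blemish is an indexing pedantry: your $\mathcal{G}_k$ includes $j=0$ while Proposition \ref{transition prob. d} is stated for $k\ge 1$, so one must accept that the Markov property holds with the initial pair $(W_0,\chi_i(e_0))$ adjoined to the filtration; this is harmless and the paper is equally loose on the point.
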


\begin{proof}
Let $x, y \in \mT$ with $|x| = k \geqslant 1$ and $y^{-} = x$.  
For all $m, n \in \mathbb{Z}_{\geq 0}$, we have
\begin{align*}
    P_\root(W_{k+1} = y,\, (\Delta_{k+1})_i = n & \mid W_k = x,\, (\Delta_k)_i = m) \\
    &= \frac{P_\root(W_{k+1} = y,\, (\Delta_{k+1})_i = n,\, W_k = x,\, (\Delta_k)_i = m)}
            {P_\root(W_k = x,\, (\Delta_k)_i = m)}.
\end{align*}
We treat the numerator and denominator separately.
We first handle the denominator $P_\root(W_k = x,\, (\Delta_k)_i = m)$.  
Since $(\Delta_k)_i = \chi_i(e_{k}) - \chi_i(e_{k-1})$, partitioning on $\chi_i(e_{k-1})$ yields
\begin{align*}
    P_\root(W_k = x,\, (\Delta_k)_i = m)
    = \sum_{l = 0}^{\infty} 
       P_\root(W_{k-1} = x^{-},\, \chi_i(e_{k-1}) = l,\, W_k = x,\, \chi_i(e_{k}) = l + m).
\end{align*}

Now consider the numerator
\begin{align*}
    P_\root(W_{k+1} = y,\, (\Delta_{k+1})_i = n,\, W_k = x,\, (\Delta_k)_i = m).
\end{align*}
Using $(\Delta_{k+1})_i = \chi_i(e_{k+1}) - \chi_i(e_{k})$ and partitioning on $\chi_i(e_{k-1})$ yields
\begin{align}
\label{EEE}
\begin{split}
    &P_\root(W_{k+1} = y, \  (\Delta_{k+1})_i = n, \  W_k = x, \ (\Delta_k)_i = m) \\
    &=P_\root (W_{k+1}=y, \ \chi_i(e_{k+1})-\chi_i(e_{k})=n, \ W_k=x, \  \chi_i(e_{k})-(e_{k-1})_i=m)\\
    &= \sum_{l = 0}^{\infty} 
       P_\root(W_{k-1} = x^{-}, \ \chi_i(e_{k-1}) = l, \ W_k = x, \ \chi_i(e_{k}) = l + m, \ W_{k+1} = y, \ \chi_i(e_{k+1}) = l + m + n).
\end{split}
\end{align}
Then we want to calculate the inner probability in \eqref{EEE}. First, we want to condition on the event
\begin{align*}
    \left\{W_{k-1}=x^{-}, \ \chi_i(e_{k-1})=l, \ W_k=x, \ \chi_i(e_k)=l+m \right\}
\end{align*}
and using Markov Property:
\begin{align*}
    &P_\root\left(W_{k-1}=x^{-}, \ \chi_i(e_{k-1})=l, \ W_k=x, \ \chi_i(e_k)=l+m, \ W_{k+1}=y, \ \chi_i(e_{k+1})=l+m+n \right)\\
    & \hspace{.2 in}=P_\root\left(W_{k+1}=y, \ \chi_i(e_{k+1})=l+m+n \mid W_{k-1}=x^{-},  \ \chi_i(e_{k-1})=l,  \ W_k=x ,  \ \chi_i(e_{k})=l+m \right) \\
    &  \hspace{.4 in} \times P_\root\left(W_{k-1}=x^{-}, \  \chi_i(e_{k-1})=l,  \ W_k=x, \ \chi_i(e_{k})=l+m \right)\\
    & \hspace{.2 in} =P_\root\left(W_{k+1}=y, \ \chi_i(e_{k+1})=l+m+n \mid W_k=x, \ \chi_i(e_{k})=l+m\right) \\
    &  \hspace{.4 in} \times P_\root\left(W_{k-1}=x^{-}, \  \chi_i(e_{k-1})=l, \ W_k=x, \ \chi_i(e_{k})=m \right).
\end{align*}
Turning back to \eqref{EEE}, we have
\begin{align*}
    &P_\root( W_{k+1} = y,\, (\Delta_{k+1})_i = n,\, W_k = x,\, (\Delta_k)_i = m ) \\
    &\hspace{.2 in}= \sum_{l = 0}^{\infty} 
       P_\root( W_{k-1} = x^{-},\ \chi_i(e_{k-1}) = l,\ W_k = x,\ \chi_i(e_{k}) = l + m,\ W_{k+1} = y,\ \chi_i(e_{k+1}) = l + m + n) \\
    &\hspace{.2 in}= \sum_{l = 0}^{\infty} 
       P_\root( W_{k+1} = y,\ \chi_i(e_{k+1}) = l + m + n \mid W_k = x,\ \chi_i(e_{k}) = l + m )\\
    &\hspace{.4 in}\times P_\root ( W_{k-1} = x^{-},\ \chi_i(e_{k-1}) = l, W_k = x,\ \chi_i(e_{k}) = l + m ) \\
    &\hspace{.2 in}= \frac{p(x, y)}{p\big(x, x^{-}\big)}
       \left( \frac{F\big(x, x^{-}\big)}{1 - F\big(x, x^{-}\big)} \right)
       \left( \frac{1 - F(y, x)}{F(y, x)} \right)
       S_i^{(n)}(y, x) \\
    &\hspace{.4 in}\times \sum_{l = 0}^{\infty} 
       P_\root( W_{k-1} = x^{-},\, \chi_i(e_{k-1}) = l,\, W_k = x,\, \chi_i(e_{k}) = l + m ),
\end{align*}
where the last equality follows from \eqref{TP}.

Collecting the above, we obtain
\begin{align}
\label{chchjdvvvbbv}
\begin{split}
        P_\root( W_{k+1} = &y, \ (\Delta_{k+1})_i = n \mid W_k = x, \ (\Delta_k)_i = m ) \\
    &= \frac{P_\root( W_{k+1} = y,\ (\Delta_{k+1})_i = n,\ W_k = x,\ (\Delta_k)_i = m )}
                 {P_\root ( W_k = x,\ (\Delta_k)_i = m )} \\
    &= \frac{p(x, y)}{p\big(x, x^{-}\big)}
       \left( \frac{F\big(x, x^{-}\big)}{1 - F\big(x, x^{-}\big)} \right)
       \left( \frac{1 - F(y, x)}{F(y, x)} \right)
       S_i^{(n)}(y, x).
\end{split}
\end{align}
\end{proof}

For the above transition probability \eqref{chchjdvvvbbv}, according to  \eqref{eq:F_i_sec4}, $\frac{F(x, x^{-})}{1 - F(x, x^{-})}$ and $\frac{1 - F(y, x)}{F(y, x)}$ depend on the cone types of $T_x$ and $T_y$, respectively. 
Moreover, $S^{(n)}_i(y, x)$ depends only on $n$, $i$, and the cone type of $T_y$ from \eqref{S_i_def_general}. Given all of the above, we conclude that the transition probability 
\begin{align*}
    P_\root(W_{k+1}=y,\ \left(\Delta_{k+1}\right)_i=n \mid W_k=x, \  (\Delta_k)_i=m)
\end{align*}
depends only on $n$, $i$, and the cone types of $T_x$ and $T_y$. 
Therefore, we can factorize with respect to the cone types, which implies that 
\begin{align*}
    \left(U_k,\left(\Delta_k\right)_i\right)_{k \geq 1}
\end{align*}
forms a Markov chain on $I \times \mathbb{Z}_{\ge 0}\backslash (C_i,0)$. The transition probability for $\left(U_k,\left(\Delta_k\right)_i\right)_{k \geq 1} $ is, for any $a,b \in \{1,2,\cdots,d\}$,
\begin{align*}
    \tilde{\mathrm{q}}((C_a, m),(C_b, n)) :&=P_\root(U_{k+1}=C_b, (\Delta_{k+1})_i=n \mid U_k=C_a,(\Delta_k)_i=m)\\
    &=\frac{k_b^{+}}{k_a^{-}} \cdot \frac{F_a}{1-F_a} \cdot \left( \frac{1-F_b}{F_b} \right) \cdot S_i^{b,n}\\
    &=\frac{k_b^{+}}{k_a^{-}} \cdot \frac{F_a}{1-F_a} \cdot (1-F_b) \cdot \frac{S_i^{b,n}}{S_i^b}\\
    &= V_{ab} \cdot \frac{S_i^{b,n}}{S_i^b} >0,
\end{align*}
where we define $S_i^{b,n} : = S_i^{(n)}(xM_b,x), \ x \in \mT$ and use $S_i^b = F_b$ from \eqref{S_i_def_general} in the last equality and 
\begin{align*}
    \{V_{ab}\}_{a,b \in \{1,2,\cdots,d\}}
\end{align*}
are the transition probabilities of the Markov chain $\left(U_k\right)_{k \geq 1}$, as defined in \eqref{general_M_matrix}.

With these probabilities in hand, we obtain the following proposition.
\begin{proposition}
\label{prop:lkdjafj}
    Fix $i\in\{1,\dots,d\}$. The bi-variate process $\left(U_k, (\Delta_k)_i\right)_{k \geq 1}$ is a positive recurrent Markov chain on $I \times \mathbb{Z}_{\ge 0} \backslash (C_i,0)$. Its stationary probability measure $\tilde{\sigma}$ is given by
    \begin{align*}
        \tilde{\sigma}(C_a, n) = \bar{\sigma}_a \frac{S_i^{a,n}}{S_i^a}, \quad \text{for all } (C_a, n) \in I \times \mathbb{Z}_{\ge 0} \backslash (C_i,0),
    \end{align*}
    where $\bar{\sigma}_a$ denotes the limiting proportion of cone type $C_a$ (equivalently, the limiting fraction of monomer $M_a$, as characterized in Theorem~\ref{thm:proportion}).
\end{proposition}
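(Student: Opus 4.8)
The Markov property of $\left(U_k,(\Delta_k)_i\right)_{k\ge 1}$ and the transition probabilities $\tilde{q}((C_a,m),(C_b,n)) = V_{ab}\cdot \tfrac{S_i^{b,n}}{S_i^b}$ are already in hand from the discussion preceding the proposition, so the plan is to verify four things: that $\tilde\sigma$ solves the global balance equations, that $\tilde\sigma$ is a genuine probability measure, that the chain is irreducible on the stated state space, and finally that positive recurrence follows. The structural observation driving everything is that $\tilde{q}((C_a,m),(C_b,n))$ is independent of the second coordinate $m$ of the source state; this is what lets the two coordinates decouple.

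First I would check stationarity by expanding $\sum_{(C_a,m)} \tilde\sigma(C_a,m)\,\tilde{q}((C_a,m),(C_b,n))$. Since the transition probability does not depend on $m$, the factor $V_{ab}\tfrac{S_i^{b,n}}{S_i^b}$ pulls outside the inner sum, leaving $\sum_{m}\tilde\sigma(C_a,m)$. By Proposition~\ref{prop:768768}, $\sum_{m\ge 0} S_i^{a,m} = S_i^a = F_a$, so this marginal collapses to $\bar\sigma_a$, and what remains is $\tfrac{S_i^{b,n}}{S_i^b}\sum_a \bar\sigma_a V_{ab}$. Using $\bar\sigma V = \bar\sigma$ from \eqref{stationary_general}, this is exactly $\bar\sigma_b\tfrac{S_i^{b,n}}{S_i^b} = \tilde\sigma(C_b,n)$. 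The same identity $\sum_m S_i^{a,m}=F_a$ also gives $\sum_{(C_a,n)}\tilde\sigma(C_a,n) = \sum_a \bar\sigma_a = 1$, so $\tilde\sigma$ is a probability measure.

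Next I would pin down the state space and irreducibility. The exclusion of $(C_i,0)$ is not a convention but a necessity: when $U_k = C_i$ the boundary state $W_k = Z_{e_k}$ itself has cone type $C_i$, so that visit is always counted and forces $(\Delta_k)_i \ge 1$; equivalently the starting state $xM_i$ already contributes one visit to $C_i$, so $S_i^{i,0}=0$ and no transition ever lands on $(C_i,0)$, making the restricted state space closed. For irreducibility I would verify that $S_i^{b,n}>0$ for every admissible pair (all $n\ge 0$ when $b\ne i$, all $n\ge 1$ when $b=i$) by constructing explicit excursions in the subtree rooted at $xM_b$: perform the requisite number of round trips of the form $xM_b \to xM_bM_i \to xM_b$ to accrue exactly the desired number of visits to cone type $C_i$, then detach to $x$. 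Since $V_{ab}>0$ for all $a,b$ by \eqref{general_M_matrix}, the one-step probability $\tilde{q}((C_a,m),(C_b,n)) = V_{ab}\tfrac{S_i^{b,n}}{S_i^b}$ is then strictly positive into every admissible target from every source; in fact every admissible state is reachable in a single step.

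With an invariant probability measure on an irreducible countable-state chain, positive recurrence follows from the standard criterion. The main obstacle here is not any delicate estimate but the bookkeeping around the second coordinate: one must be careful to confirm that the restricted state space is genuinely closed (so that the normalization sums over exactly the intended set) and that the excursion constructions realize \emph{every} admissible value of $n$, since these positivity facts are precisely what underlie both irreducibility and the collapse of the $m$-marginal in the stationarity computation.
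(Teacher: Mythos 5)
Your proposal follows the same route as the paper's own proof: you verify stationarity by exploiting the fact that $\tilde q((C_a,m),(C_b,n))$ does not depend on the source's second coordinate $m$, collapse the $m$-marginal via $\sum_{m} S_i^{a,m} = S_i^a = F_a$ (Proposition \ref{prop:768768}), finish the balance equation with $\bar\sigma V = \bar\sigma$ from \eqref{stationary_general}, obtain normalization from the same identity, and conclude positive recurrence from irreducibility plus the existence of a stationary probability measure. All of this matches the paper's argument step for step.

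The one place you go beyond the paper---and where your argument has a flaw---is the positivity claim $S_i^{b,n}>0$, which the paper merely asserts but you attempt to prove by round trips ``of the form $xM_b \to xM_bM_i \to xM_b$.'' This works when $b \ne i$: each such round trip contributes exactly one counted visit (the state $xM_bM_i$), so $n$ round trips followed by detachment realize exactly $n$. But when $b = i$ the same construction fails: the starting state $xM_i$, the excursion state $xM_iM_i$, and every return to $xM_i$ all have cone type $C_i$, so $k$ round trips followed by detachment produce $2k+1$ counted visits---only \emph{odd} values of $n$ are realized, not ``all $n\ge 1$'' as you claim. The repair, available when $d \ge 2$, is to make the round trips through a \emph{different} monomer, $xM_i \to xM_iM_j \to xM_i$ with $j \ne i$: the excursion state $xM_iM_j$ is not counted and each return to $xM_i$ adds exactly one counted visit, so $n-1$ such round trips followed by detachment realize exactly $n$. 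Note that this repair genuinely needs $d \ge 2$: when $d=1$, every first-passage path from $xM_1$ to $x$ on the resulting half-line has odd length and every state on it has cone type $C_1$, so $S_1^{1,n}>0$ only for odd $n$, the chain is not irreducible on the full stated state space, and $\tilde\sigma$ vanishes on the even states. This degenerate case is an edge case that the paper's proof (which asserts $S_i^{b,n}>0$ without argument) also overlooks, but your proof makes the false claim explicit, so the constructive argument must be stated with the $j \ne i$ excursions and the restriction $d\ge 2$.
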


\begin{proof}

We see that $S_i^{b,n} > 0$, $V_{ab}>0$ for any $(C_a, m),(C_b, n) \in I \times \mathbb{Z}_{\ge 0} \backslash (C_i,0)$, then $\tilde{\mathrm{q}}((C_a, m),(C_b, n)) > 0$ for any $(C_a, m), (C_b, n) \in  I \times \mathbb{Z}_{\ge 0} \backslash (C_i,0)$, so $\left(U_k, (\Delta_k)_i\right)_{k \geq 1}$ is irreducible. Also, it's straightforward that $\tilde{\sigma}$ is a stationary probability measure. Since $ S_i^{i,0}=0$,
\begin{itemize}
    \item $\tilde{\sigma}$ is a probability measure:
    \begin{align*}
        \sum_{(C_a, n) \in  I \times \mathbb{Z}_{\ge 0}\backslash (C_i,0)} \tilde{\sigma}(C_a, n)
        = \sum_{a=1}^d \sum_{n \in \mathbb{Z}_{\ge 0}} \bar{\sigma}_a \frac{S_i^{a,n}}{S_i^a} = \sum_{a=1}^d \bar{\sigma}_a =1.
    \end{align*}
    \item $\tilde{\sigma}$ is a stationary probability measure: for any $(C_b, n) \in I \times \mathbb{Z}_{\ge 0}\backslash (C_i,0)$,
    \begin{align*}
        \sum_{(C_a, m) \in  I \times \mathbb{Z}_{\ge 0}\backslash (C_i,0)} \tilde{\sigma}(C_a, m) \tilde{q}((C_a, m), (C_b, n)) &= \sum_{a=1}^d  \sum_{m \in \mathbb{Z}_{\ge 0}} \bar{\sigma}_a \frac{S_i^{a,m}}{S_i^a} \cdot V_{ab} \cdot \frac{S_i^{b,n}}{S_i^b} \\
        &=\sum_{a=1}^d  \bar{\sigma}_a  V_{ab}  \frac{S_i^{b,n}}{S_i^b} \sum_{m \in \mathbb{Z}_{\ge 0}} \frac{S_i^{a,m}}{S_i^a} \\
        &=\bar{\sigma}_b \frac{S_i^{b,n}}{S_i^b} =\tilde{\sigma}(C_b, n) >0,
    \end{align*}
\end{itemize}
where we used that $\bar{\sigma}$ satisfies the stationary equation for the base cone-type Markov chain $\left(U_k\right)_{k \ge 1}$, i.e.,
\begin{align*}
    \bar{\sigma}_b = \sum_{a=1}^d \bar{\sigma}_a V_{ab}.
\end{align*}
Since there exists a positive stationary probability measure for $\left(U_k, (\Delta_k)_i\right)_{k \geq 1}$, $\left(U_k, (\Delta_k)_i\right)_{k \geq 1}$ is positive recurrent and the proof of Proposition \ref{prop:lkdjafj} is complete.
\end{proof}

Finally, we need the expectation of $(\Delta_k)_i$ under the stationary distribution just computed.  For that purpose, consider the projection $g_i: I \times \mathbb{Z}_{\ge 0}\backslash (C_i,0) \rightarrow \mathbb{Z}_{\ge 0},(a, n) \mapsto n$ for $\left(U_k, (\Delta_k)_i\right)_{k \geq 1}$. We have
\begin{align*}
    E_i: &= \int_{I \times \mathbb{Z}_{\ge 0}\backslash (C_i,0)} \  g_i \ d \tilde{\sigma} =\sum_{(C_a, n) \in  I \times \mathbb{Z}_{\ge 0}\backslash (C_i,0)} n \tilde{\sigma}(C_a, n) \\
    &=\sum_{(C_a, n) \in  I \times \mathbb{Z}_{\ge 0}\backslash (C_i,0)} n \bar{\sigma}_a \frac{S_i^{a,n}}{S_i^a}  = \sum_{a=1}^d \sum_{n=0}^\infty n \bar{\sigma}_a \frac{S_i^{a,n}}{S_i^a} \\
    &=\sum_{a=1}^d \bar{\sigma}_a \frac{(S_i^a)^{\prime}}{S_i^a} \\
    &\leq \sum_{a=1}^d \bar{\sigma}_a \frac{F_a^{\prime}}{F_a} \tag{by \eqref{S_i_def_general}}\\
    &< \infty.
\end{align*}
By the ergodic theorem for positive recurrent Markov chains, almost surely, for each $i\in\{1,\dots,d\}$,
\begin{align*}
    \lim_{k\to\infty}\frac{\chi_i(e_k)-\chi_i(e_0)}{k}
    = \lim_{k\to\infty}\frac{1}{k}\sum_{m=1}^k g_i\bigl(U_m,(\Delta_m)_i\bigr)
    = E_i < \infty.
\end{align*}
Moreover, as noted in \cite{woess2009denumerable}, 
\begin{align*}
    \lim_{k\to\infty}\frac{\chi_i(e_0)}{k}=0.
\end{align*}
Combining the previous two points yields the almost sure limit
\begin{equation}
\label{ergodic general d}
    \lim_{k\to\infty}\frac{\chi_i(e_k)}{k}=E_i<\infty.
\end{equation}
Note that this result pertains to the boundary process.  Hence, we must shift it to the nominal process $X$.

We recall the following integer-valued random variables
\begin{align*}
    \boldsymbol{\hat k}(n)=\max \left\{k: e_k \leq n\right\}.
\end{align*}
We have the following almost sure inequalities from \cite{woess2009denumerable}, 
\begin{align*}
    \boldsymbol{\hat k}(n) \to \infty, \quad \frac{e_{\boldsymbol{\hat k}(n)+1}}{e_{\boldsymbol{\hat k}(n)}} \to 1, \quad \frac{\chi_i \left(e_{\boldsymbol{\hat k}(n)+1} \right)}{\chi_i \left(e_{\boldsymbol{\hat k}(n)} \right) } \to 1,
\end{align*}
so that
\begin{align*}
    &0\leq\frac{n-e_{\boldsymbol{\hat k}(n)}}{n} \leq \frac{e_{\boldsymbol{\hat k}(n)+1}-e_{\boldsymbol{\hat k}(n)}}{n} \leq \frac{e_{\boldsymbol{\hat k}(n)+1}-e_{\boldsymbol{\hat k}(n)}}{e_{\boldsymbol{\hat k}(n)}} \rightarrow 0,\\
    &0\leq\frac{\chi_i(n)-\chi_i \left(e_{\boldsymbol{\hat k}(n)} \right)}{\chi_i(n)} \leq \frac{\chi_i \left(e_{\boldsymbol{\hat k}(n)+1} \right)-\chi_i \left(e_{\boldsymbol{\hat k}(n)} \right)}{\chi_i(n)} \leq \frac{\chi_i \left(e_{\boldsymbol{\hat k}(n)+1} \right)-\chi_i \left(e_{\boldsymbol{\hat k}(n)} \right)}{\chi_i \left(e_{\boldsymbol{\hat k}(n)} \right)} \to 0.
\end{align*}
Then for each $i = \{1, 2, \cdots, d\}$, we have as $n \to \infty$, almost surely,
\begin{align*}
    \frac{e_{\boldsymbol{\hat k}(n)}}{n} \to 1, \quad \frac{\chi_i \left(e_{\boldsymbol{\hat k}(n)} \right)}{\chi_i(n)} \to 1,
\end{align*}
so that for each $i = \{1, \dots, d\}$,
\begin{align*}
    \lim_{n \rightarrow \infty} \frac{\chi_i(n)}{n} 
    &= \lim_{n \rightarrow \infty} \frac{\chi_i(n)}{\chi_i \left(e_{\boldsymbol{\hat k}(n)} \right)} \cdot \frac{\chi_i \left(e_{\boldsymbol{\hat k}(n)} \right)}{e_{\boldsymbol{\hat k}(n)}} \cdot \frac{e_{\boldsymbol{\hat k}(n)}}{n} = \lim_{n \rightarrow \infty} \frac{\chi_i \left(e_{\boldsymbol{\hat k}(n)} \right)}{e_{\boldsymbol{\hat k}(n)}} \\
    &= \lim_{k \rightarrow \infty} \frac{\chi_i(e_{k})}{e_k} = \lim_{k \rightarrow \infty} \frac{\chi_i(e_{k})}{k} \cdot \frac{k}{e_k} =E_i \cdot \bar{v} < \infty,
\end{align*}
where the final equality follows from both  \eqref{ergodic general d} and \cite[Theorem~9.100]{woess2009denumerable}.
Hence, the existence of the limit has been verified.

\section{Copolymerization process involving two monomer types}
\label{Example: Copolymerization process involving two monomer types}

To illustrate our general results, we now specialize to the case of two monomer types. Thus, in this section we consider a copolymerization process $X$ involving the monomers $M_1 $ and $M_2$. The constants $k_i^{+}$ and  $k_i^{-}$ represent the attachment and detachment rates of monomer $M_i$, for  $i \in \{1,2\}$. The process can be visualized as in Figure \ref{fig:2monomerTree}. According to 
Theorem~\ref{thm:question1},  the recurrence/transience criterion for the copolymerization process \(X\) is given by the parameter
\begin{align*}
    \alpha \;=\; \frac{k_1^{+}}{k_1^{-}}+\frac{k_2^{+}}{k_2^{-}}.
\end{align*}
Specifically, $X$ is positive recurrent if \(\alpha<1\), null recurrent if \(\alpha=1\), and transient if \(\alpha>1\).

We begin by providing closed form solutions in this two-monomer case.  
Note that the results given here  are consistent with those presented in Section~III  of the paper ``Extracting chemical energy by growing disorder: Efficiency at maximum power'' \cite{esposito2010extracting}.

In the transient regime, the limiting proportion of each monomer type is given by
Theorem~\ref{thm:proportion} (see Section~\ref{Limiting portion of each type of monomer}). For the two–monomer case, when $X$ is transient, we obtain explicit formulas for the limiting proportions
\(\bar\sigma_1\) and \(\bar\sigma_2\) of \(M_1\) and \(M_2\), respectively.
We first consider the special case $k_1^{-} = k_2^{-}$. In this scenario, the almost-sure limiting proportions of $M_1$ and $M_2$ are
\begin{align}
\label{sigma111222}
     \bar{\sigma}_1=\lim_{t\to \infty} \sigma_{1}(t)  = \frac{k_1^{+}}{k_1^{+}+k_2^{+}}, 
     \qquad 
     \bar{\sigma}_2 =\lim_{t\to \infty} \sigma_{2}(t)  = \frac{k_2^{+}}{k_1^{+}+k_2^{+}}.
\end{align}
In the case of $k_1^{-} \neq k_2^{-}$, we obtain the almost-sure limiting proportions of $M_1$ and $M_2$ as
\begin{align}
\label{sigma222111}
\begin{split}
        \bar{\sigma}_1=\lim_{t\to \infty} \sigma_{1}(t) &=  \frac{k_1^{+}+k_2^{+}+k_1^{-}-k_2^{-}-\sqrt{(k_1^{+}+k_2^{+}+k_1^{-}-k_2^{-})^2 + 4 k_1^{+} k_2^{-} - 4 k_1^{+} k_1^{-}}}{2(k_1^{-}-k_2^{-})}, \\
    \bar{\sigma}_2=\lim_{t\to \infty} \sigma_{2}(t) &=  \frac{k_1^{+}+k_2^{+}+k_2^{-}-k_1^{-}-\sqrt{(k_1^{+}+k_2^{+}+k_2^{-}-k_1^{-})^2 + 4 k_2^{+} k_1^{-} - 4 k_2^{+} k_2^{-}}}{2(k_2^{-}-k_1^{-})}.
\end{split}
\end{align}
By Theorem \ref{thm:velocity},  the asymptotic growth velocity for the two–monomer case is given by
\begin{align*}
    v=\lim_{t \to \infty} \frac{|X_t|}{t} = k_1^{+}+k_2^{+}-\bar{\sigma}_1 k_1^{-}-\bar{\sigma}_2 k_2^{-},
\end{align*}
where $\bar{\sigma}_1$ and $\bar{\sigma}_2$ given at \eqref{sigma111222} and \eqref{sigma222111}.

Next, we provide simulated results of the two-monomer process with the following parameters,
\begin{equation}
    \label{eq:L90878908}
k_1^+ = 1, \quad   k_1^- = 1.8, \quad  k_2^+ = 1.2,   \quad \text{and} \quad  k_2^- = 2.592.
\end{equation}
Note that for these parameters, we have $k_1^+<k_1^-$ and $k_2^+<k_2^-$, but  $\alpha = \frac{1}{1.8} + \frac{1.2}{2.592} \approx 1.0185>1$, which puts us in the transient regime even though the detachment rate for each monomer is higher than its attachment rate. 
Specifically, we will visualize the convergence to the limiting proportion of each monomer type (Theorem \ref{thm:proportion}) and the limiting velocity of the growth of the process (Theorem \ref{thm:velocity}). Then, we will provide a visualization of the boundary process, which we used analytically throughout this paper.

For the parameters given in  \eqref{eq:L90878908}, we have
\[
\bar{\sigma}_1 \approx 0.5436 \quad \text{and} \quad \bar{\sigma}_2 \approx 0.4564.
\]
See Figure \ref{fig:actual_ratios} for our simulated results demonstrating the convergence of the proportions to these values.
\begin{figure}
    \centering
    \begin{subfigure}[t]{0.45\textwidth}
        \centering
        \includegraphics[width=\textwidth]{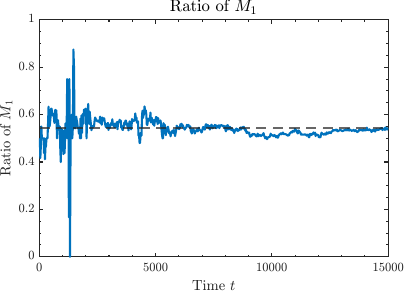}
        \caption{Empirical ratio of monomer $M_1$.}
        \label{fig:actual_ratio_1}
    \end{subfigure}
    \hfill
    \begin{subfigure}[t]{0.45\textwidth}
        \centering
        \includegraphics[width=\textwidth]{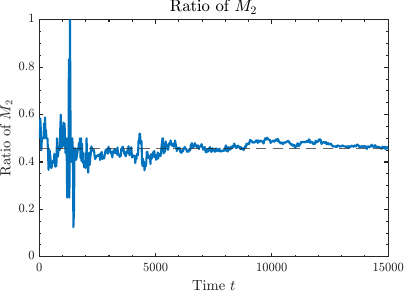}
        \caption{Empirical ratio of monomer $M_2$.}
        \label{fig:actual_ratio_2}
    \end{subfigure}
    \caption{
        Evolution of the empirical proportions of monomers $M_1$ and $M_2$ in the polymer with parameters given in \eqref{eq:L90878908}. The blue curves represent simulation results, while the gray dashed lines indicate the theoretical limiting values: $\bar{\sigma}_1 \approx 0.5436$ (left) and $\bar{\sigma}_2 \approx 0.4564$ (right).        
        } 
    \label{fig:actual_ratios}
\end{figure}
\begin{figure}
    \centering
    \includegraphics[width=0.5\linewidth]{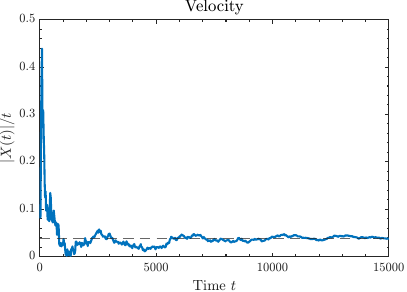}
    \caption{
        Empirical polymer growth velocity $\frac{|X_t|}{t}$ for the process with parameters \eqref{eq:L90878908}.  Note that the blue curve approaches the theoretical value $v \approx 0.0382$ (gray dashed line).
    }
    \label{fig:velocity}
\end{figure}
 In the simulation of the velocity $\frac{|X_t|}{t}$ in Figure \ref{fig:velocity} we see that the empirical velocity also stabilizes around the theoretical benchmark.

We turn to a visualization of the boundary process.   We remind the reader that this process played a critical role in the analysis carried out in sections \ref{Limiting portion of each type of monomer} and \ref{Asymptotic Growth Rate}.  In particular, we used that the boundary process remained ``close'' to the original process for all time (in a very specific manner), and we hope to demonstrate that visually here.  We recall that the boundary process was defined in order to keep track of the ``exit state'' at each level.  That is, $W_k$ was the particular state of our tree from level $k$ (i.e., was a polymer with $k$ monomers) that appears in the limiting ``infinite length'' polymer.  
That is, $W_k$ is   the unique prefix of the limiting polymer.

Note that the issue with simulating the boundary process is that the ``last exit time'' from a level is not a stopping time.  Since ``simulating to time infinity'' is not an option, we instead chose to simulate to a very large time, $T = 200,000$, and then restrict our visualization to a much smaller time-frame.  As before, we used the parameters \eqref{eq:L90878908}.   See Figures \ref{fig:boundary_vs_actual_1} and \ref{fig:boundary_vs_actual_2}, where close agreement between the actual process and the boundary process can be observed, especially as the time-frame increases.

\begin{figure}
    \centering
    \includegraphics[width=0.7\linewidth]{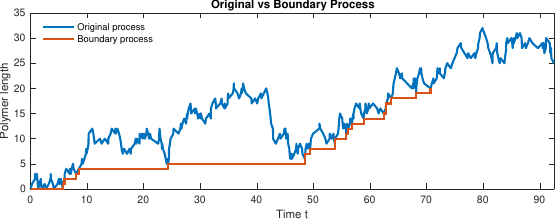}
    \caption{
        Comparison between the actual process and the boundary process in terms of polymer length over time period  $[0, 70]$.
    }
    \label{fig:boundary_vs_actual_1}
\end{figure}

\begin{figure}
    \centering
    \includegraphics[width=0.7\linewidth]{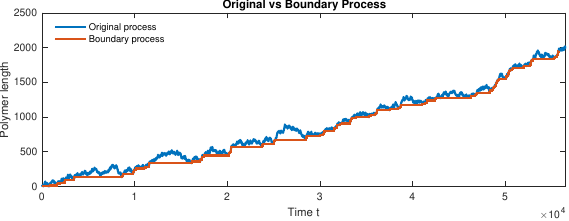}
    \caption{
        Comparison between the actual process and the boundary process in terms of polymer length over time period  $[0, 50,000]$.
    }
    \label{fig:boundary_vs_actual_2}
\end{figure}

\section{Discussion}
\label{Discussion}

Motivated by models from the Origins-of-Life literature, in this paper we studied a stochastic model of polymer growth. Earlier treatments focused  on two monomer types and relied on heuristic arguments. We provided rigorous analysis and, using this framework, extended the analysis seamlessly to the case of $d$ monomer types. The main contributions are as follows.

\begin{itemize}
    \item We formulated the copolymerization process with finitely many monomer types as a continuous-time Markov chain (CTMC) on an infinite, tree-like state space. By considering the embedded discrete-time Markov chain (DTMC), we characterized the positive recurrent, null recurrent, and transient conditions using spectral theory for random walks on trees with finitely many cone types.

    \item In the transient regime, we provide explicit formulas for the limiting monomer proportions $\{\bar{\sigma}_i\}$. These limits are characterized by the stationary distribution of an associated cone–type Markov chain obtained from the boundary process.

    \item We derived an explicit formula for the asymptotic velocity of polymer growth in the transient case. The expression involves the limiting monomer proportions $\bar{\sigma}_i$, and the transition rates, and relies on reversibility arguments.

\end{itemize}

Together, these results provide, to the best of our knowledge, the first mathematically rigorous treatment of this class of copolymerization models, generalizing and justifying earlier physics-based work. The methods introduced here—particularly the spectral criterion for transience, the cone-type Markov chain formalism, and the explicit construction of boundary processes—are broadly applicable to other stochastic models of assembly processes with hierarchical or rule-based structures.

Beyond the specific copolymerization setting, the analytic framework developed---particularly in Section~5 and particularly the use of boundary processes and the control of continuous-time excursions between successive growth levels---addresses a general class of problems for Markov chains on tree-like state spaces. In such models, natural macroscopic observables (such as growth rates or empirical frequencies) are often coupled to local transition mechanisms in a way that is not directly accessible through moment equations alone. The methods introduced here provide a systematic way to extract almost-sure asymptotic behavior in this setting, and we expect them to be useful in other stochastic growth processes with similar hierarchical structure.

To conclude, we mention possible avenues for future research. Most biochemically relevant processes can be modeled as rule-based systems \cite{danos2007rule}, in which a finite set of rules gives rise to an infinite cascade of assemblies and functions. These systems can sometimes be formally described using the double-pushout approach from category theory \cite{ehrig1973graph,andersen2016software}, and significant work in the computer science and mathematics communities has focused on formulating them as continuous-time Markov chains via generating function techniques \cite{behr2016stochastic, behr2021stochastic}, as well as developing methods for their simulation \cite{boutillier2018kappa}. More recently, algebraic approaches based on the Fock space formalism have been introduced in the physics literature to study rule-based systems \cite{rousseau2025algebraic}. The copolymerization process examined in this paper provides a simple yet illustrative example of a rule-based system. We hope that the rigorous treatment of the copolymerization model presented here will serve as a common platform for unifying different mathematical approaches to rule-based systems and for inspiring their extension to biologically significant problems.

\section*{Acknowledgments}

DFA gratefully acknowledges support from NSF grant DMS-2051498.
PG wants to acknowledge Eric Smith, Nicolas Behr, and Jean Krivine for technical discussions. 
PG was partially funded by the National Science Foundation, Division of Environmental Biology (Grant No: DEB-2218817), JST CREST (JPMJCR2011), and JSPS grant No: 25H01365.

\bibliographystyle{plain}
\bibliography{references}

\appendix

\section{Proof of Proposition \ref{transition prob. d}}
\label{AppendixProof1}

\begin{proof}[Proof of Proposition \ref{transition prob. d}]
Let $x, y \in \mT$ with $|x| = k \geq 1$ and $y^{-} = x$. Then for all $m, n \in \mathbb{Z}_{\geq 0}$ with $n \geq m$, we have
\begin{align}
\begin{split}\label{eq:8769876a}
    P_\root(W_{k+1} = y, \chi_i(e_{k+1}) = n& \mid W_k = x, \chi_i(e_{k}) = m)\\
    &= \frac{P_\root(W_{k+1} = y, \chi_i(e_{k+1}) = n, W_k = x, \chi_i(e_{k}) = m)}{P_\root(W_k = x, \chi_i(e_{k}) = m)}.
    \end{split}
\end{align}
We handle the numerator and denominator separately.  We begin with the denominator $P_\root(W_k=x, \chi_i(e_k)=m)$.
Partitioning on $e_k$ yields
\begin{align}\label{FFF_d}
    P_\root(W_k = x, \chi_i(e_{k}) = m)
    = \sum_{l = m}^{\infty} P_\root(W_k = x, e_k = l, \chi_i(e_{k}) = m).
\end{align}
For $l \geq m \geq 0$, $k \geq 1$, the conditional probability formula and the Markov property yield
\begin{align}
    P_\root(W_k = x, &e_k = l, \chi_i(e_{k}) = m)\notag \\
    &= P_\root\left( Z_l = x,\ Z_r \in \mT_x \setminus \{x\} \ \forall r \geq l + 1,\ \sum_{r = 0}^l \mathbf{1}_{(C(Z_r) = C_i)} = m \right)\notag \\
    &= P_\root\left(Z_r \in \mT_{x}\backslash \{x\} \ \forall r \ge l +1\ \big|\ Z_l = x,\ \sum_{r=0}^{l} \mathbf{1}_{(C(Z_r)=C_{i})} = m \right) \notag\\
    &\hspace{1em} \times P_\root\left(Z_l = x,\ \sum_{r=0}^{l} \mathbf{1}_{(C(Z_r)=C_{i})} = m\right)\notag\\
    &= P_x( Z_r \in \mT_x \setminus \{x\} \ \forall r \geq 1 ) \times P_\root \left( Z_l = x,\ \sum_{r = 0}^l \mathbf{1}_{(C(Z_r) = C_i)} = m \right).\label{QQQ_d}
\end{align}
From \cite[Equation 4.1]{Nagnibeda2002RWOT} we have:
\begin{align*}
    P_x(Z_r \in \mT_x \setminus \{x\} \ \forall r \geq 1)
    = \sum_{y: y^{-} = x} p(x, y) \, (1 - F(y, x)) = p(x, x^{-}) \left( \frac{1}{F(x, x^{-})} - 1 \right).
\end{align*}
Substituting this into \eqref{QQQ_d} gives
\begin{align*}
    P_\root(W_k = x, e_k = l, \chi_i(e_{k}) = m)
    = p(x, x^{-})\left( \frac{1}{F(x, x^{-})} - 1 \right)
      P_\root\left( Z_l = x,\ \sum_{r = 0}^l \mathbf{1}_{(C(Z_r) = C_i)} = m \right).
\end{align*}
Returning to \eqref{FFF_d}:
\begin{align*}
    P_\root(W_k = x,  \chi_i(e_{k}) = m) &= \sum_{l=m}^{\infty} P_\root\left(W_k=x, e_k=l, \chi_i(e_k)=m\right) \\
    &\quad = p(x, x^{-})\left( \frac{1}{F(x, x^{-})} - 1 \right)
      \sum_{l = m}^{\infty} P_\root \left( Z_l = x,\ \sum_{r = 0}^l \mathbf{1}_{(C(Z_r) = C_i)} = m \right) \\
    &\quad = p(x, x^{-})\left( \frac{1}{F(x, x^{-})} - 1 \right)
      \sum_{l = m}^{\infty} P_\root \left( Z_l = x,\ \sum_{r = 1}^l \mathbf{1}_{(C(Z_r) = C_i)} = m \right)  \\
    &\quad = p(x, x^{-})\left( \frac{1}{F(x, x^{-})} - 1 \right) T_i^{(m)}(\root, x),
\end{align*}
where $T_i^{(m)}(\root, x)$ is defined at \eqref{T(x,y)}.

Now we are ready to study the numerator of \eqref{eq:8769876a}.
We begin by partitioning the event over the possible values of $e_k$ and $e_{k+1} - e_k$:
\begin{align*}
    &P_\root(W_k = x, W_{k+1} = y,\chi_i(e_k) = m, \chi_i(e_{k+1}) = n) \\
    &= P_\root(W_k = x, W_{k+1} = y,\chi_i(e_k) = m,\chi_i(e_{k+1}) - \chi_i(e_k) = n - m) \\
    &= \sum_{l = m}^{\infty} \sum_{q = n - m}^{\infty} 
    P_\root(W_k = x, W_{k+1} = y, e_k = l, e_{k+1} - e_k = q, \chi_i(e_k) = m, \chi_i(e_{k+1}) - \chi_i(e_k) = n - m).
\end{align*}
Now we study the inner probability. For \( l \geq m \geq 0 \), \( q \geq n - m \geq 0 \), and \( k \geq 1 \), we have:
\begin{align*}
    &P_\root(W_k = x, W_{k+1} = y, e_k = l, e_{k+1} - e_k = q, \chi_i(e_k) = m, \chi_i(e_{k+1}) - \chi_i(e_k) = n - m) \\
    &= P_\root \left(
        \begin{aligned}
            &Z_l = x, \
            Z_{l + r} \in \mT_y \text{ for } r = 1, 2, \ldots, q - 1, \
            Z_{l + q} = y, \
            Z_{l + q + v} \in \mT_y \backslash \{y\} \ \forall\,v \geq 1, \\
            &\sum_{s = 0}^{l} \mathbf{1}_{(C(Z_s) = C_{i})} = m, 
            \sum_{s = l + 1}^{l + q} \mathbf{1}_{(C(Z_s) = C_{i})} = n - m
        \end{aligned}
    \right).
\end{align*}
Conditioning on the event
\begin{align*}
   \left\{Z_l = x,\ \sum_{s=0}^l \mathbf{1}_{\left(C(Z_s) = C_{i}\right)} = m\right\}
\end{align*}
and utilizing the Markov property yields
\begin{align*}
    &P_\root\left(W_k = x, W_{k+1} = y, e_k = l, e_{k+1} - e_k = q, \chi_i(e_k) = m,\ \chi_i(e_{k+1}) - \chi_i(e_k) = n - m\right) \\
    &= P_\root \left(
    \begin{aligned}
        &Z_{l+r} \in \mT_y \ \text{for } r = 1,\dots,q-1,\ 
        Z_{l+q} = y,\ 
        Z_{l+q+v} \in \mT_y \setminus \{y\} \ \forall\, v \geq 1, \\
        &\sum_{s=l+1}^{l+q} \mathbf{1}_{(C(Z_s) = C_{i})} = n - m \big| Z_l = x,\ \sum_{s=1}^l \mathbf{1}_{(C(Z_s) = C_{i})} = m
    \end{aligned}
    \right) \\
    &\hspace{.3in} \times P_\root\left(Z_l = x,\ \sum_{s=0}^l \mathbf{1}_{(C(Z_s) = C_{i})} = m\right) \\
    &= P_x\left(
       \begin{aligned}
        &Z_r \in \mT_y \ \text{for } r = 1,\dots,q-1,\ 
        Z_q = y,\ 
        Z_{q+v} \in \mT_y \setminus \{y\} \ \forall\, v \geq 1, \\
        &\sum_{s=1}^q \mathbf{1}_{(C(Z_s) = C_{i})} = n - m
       \end{aligned}
    \right) \\
    &\hspace{.3in} \times P_\root\left(Z_l = x,\ \sum_{s=1}^l \mathbf{1}_{(C(Z_s) = C_{i})} = m\right).
\end{align*}
Similarly, we can split out the event 
\begin{align*}
     \left\{Z_{q+v} \in \mT_y \setminus \{y\} \ \forall\, v \geq 1\right\}
\end{align*}
using the conditional probability formula and the Markov property:
\begin{align}\label{TTT}
\begin{split}
    &P_\root\left(W_k = x, W_{k+1} = y, e_k = l, e_{k+1} - e_k = q, \chi_i(e_k) = m,\ \chi_i(e_{k+1}) - \chi_i(e_k) = n - m\right) \\
    &= P_x \left( Z_{q+v} \in \mT_y \setminus \{y\} \ \forall\, v \geq 1 \ \Big|\
        Z_r \in \mT_y \ \text{for } r = 1,\dots,q-1,\ Z_q = y, \sum_{s=1}^q \mathbf{1}_{(C(Z_s) = C_{i})} = n - m
    \right) \\
    & \hspace{.3in} \times P_x \left(
        Z_r \in \mT_y \ \text{for } r = 1,\dots,q-1,\ Z_q = y,\
        \sum_{s=1}^q \mathbf{1}_{(C(Z_s) = C_{i})} = n - m
    \right)\\
    &\hspace{.3in}  \times P_\root\left(Z_l = x,\ \sum_{s=1}^l \mathbf{1}_{(C(Z_s) = C_{i})} = m\right) \\
    &= P_y\left(Z_v \in \mT_y \setminus \{y\} \ \forall\, v \geq 1\right) \\
    &\hspace{.3in} \times P_x\left(
        Z_r \in \mT_y \ \text{for } r = 1,\dots,q-1,\ Z_q = y,\
        \sum_{s=1}^q \mathbf{1}_{(C(Z_s) = C_{i})} = n - m
    \right) \\
    &\hspace{.3in}  \times P_\root\left(Z_l = x,\ \sum_{s=1}^l \mathbf{1}_{(C(Z_s) = C_{i})} = m\right).
\end{split}
\end{align}
From \cite[equation 4.1]{Nagnibeda2002RWOT}, we have for $W_k=x$ and $W_{k+1}=y$ (with $x=y^-$),
\begin{align}
\label{MMM}
    P_y\left(Z_v \in T_y \backslash\{y\} \ \forall\, v \geq 1\right) =\sum_{z: z^{-}=y} p(y, z)(1-F(z, y))= p\left(y, x\right)\left(\frac{1}{F(y, x)}-1\right).
\end{align}
We will postpone the analysis of the term 
\begin{align*}
    P_\root\left( Z_l = x, \ \sum_{s=1}^l \mathbf{1}_{\left(C(Z_s) = C_i\right)} = m \right)
\end{align*}
until later. For now, we focus on evaluating 
\begin{align*}
    P_x\left( Z_r \in \mT_y \ \text{for } r = 1, 2, \dots, q-1, \ Z_q = y, \ \sum_{s=1}^q \mathbf{1}_{\left(C(Z_s) = C_i\right)} = n - m \right).
\end{align*}
by making use of the reversibility property. From an argument analogous to that in \eqref{reversibility over paths}, for any path of length $q$ with $z_0 = x$, $z_q = y$, and $p(z_s, z_{s+1}) > 0$ for all $s \in \{0, \dots, q-1\}$, the reversibility condition gives
\begin{align}
\label{KKK}
    \mu(x) \, p\left(x, z_1\right) p\left(z_1, z_2\right) \cdots p\left(z_{q-1}, y\right)
    = \mu(y) \, p\left(y, z_{q-1}\right) \cdots p\left(z_2, z_1\right) p\left(z_1, x\right).
\end{align}
Specifically, consider the set of paths 
\begin{align*}
    \bar{\Gamma}_{x,y}^{q,\,n-m} 
    := \{ (z_0,z_1,\dots,z_q) \in \mT^{q+1} \}
\end{align*}
of length $q$ such that  
$z_0 = x$, $z_q = y$, $p(z_s, z_{s+1}) > 0$ for all $s \in \{0, \dots, q-1\}$, $z_s \in \mT_y$ for all $s \in \{1, \dots, q-1\}$ and
\begin{align*}
    \sum_{s=1}^{q} \mathbf{1}_{\left(C(z_s) = C_i\right)} = n - m.
\end{align*}
Then we have
\begin{align*}
    P_x\bigg( Z_r \in& \mT_y \ \text{for } r = 1, 2, \dots, q-1, \ Z_q = y,\ 
    \sum_{s=1}^{q} \mathbf{1}_{\left(C(Z_s) = C_i\right)} = n - m \bigg) \\
    &= \sum_{(z_0, z_1, \dots, z_q) \in \bar{\Gamma}_{x,y}^{q,\,n-m}} 
       p(x, z_1) \cdots p(z_{q-1}, y) \\
    &= \sum_{(z_0, z_1, \dots, z_q) \in \bar{\Gamma}_{x,y}^{q,\,n-m}} 
       \frac{\mu(y)}{\mu(x)} \, p(y, z_{q-1}) \cdots p(z_1, x)
       \tag{by \eqref{KKK}} \\
    &= \sum_{(z_0, z_1, \dots, z_q) \in \bar{\Gamma}_{x,y}^{q,\,n-m}} 
       \frac{p(x, y)}{p(y, x)} \, p(y, z_{q-1}) \cdots p(z_1, x)
       \tag{by \eqref{measure}} \\
    &= \frac{p(x, y)}{p(y, x)} \,
       P_y\left( Z_r \in \mT_y \ \text{for } r = 1, 2, \dots, q-1, \ Z_q = x,\ 
       \sum_{s=0}^{q-1} \mathbf{1}_{\left(C(Z_s) = C_i\right)} = n - m \right).
\end{align*}
We now sum over the length $q$ from $n-m$ to $\infty$ of the path $\bar{\Gamma}_{x,y}^{q,n-m}$,
\begin{align}\label{SSS}
\begin{split}
    &\sum_{q=n-m}^{\infty} P_{x}\left(Z_r \in T_y \text{ for } r=1,2, \cdots, q-1, Z_q=y,\sum_{s=1}^{q} \mathbf{1}_{\left(C\left(Z_s\right)=C_i\right)}=n-m \right)\\
    &=\frac{p\left(x, y\right)}{p\left(y, x\right)}\sum_{q=n-m}^{\infty} P_{y}\left(Z_r \in T_y \text{ for } r=1,2, \cdots, q-1, Z_q=x,\sum_{s=0}^{q-1} \mathbf{1}_{\left(C\left(Z_s\right)=C_i\right)}=n-m \right)\\
    &=\frac{p\left(x, y\right)} {p\left(y, x\right)}\sum_{q=n-m}^{\infty} P_y\left(Z_r \neq x \text{
    for }r=1,2, \cdots, q-1, Z_q=x, \sum_{s=0}^{q-1} \mathbf{1}_{\left(C\left(Z_s\right)=C_i\right)}=n-m \right)\\
    &=\frac{p\left(x, y\right)}{p\left(y, x\right)} S_i^{(n-m)}\left(y, x\right).
\end{split}
\end{align}
The second equality holds because when $x = y^{-}$, the process $Z$ starting from $y$ must remain in $\mT_y$ before entering $x$ for the first time.

Now we come back to calculate $P_\root\left(W_k=x, W_{k+1}=y,\chi_i(e_{k+1})=n,\chi_i(e_k)=m\right)$:
\begin{align*}
    &P_\root\left(W_k = x,\, W_{k+1} = y,\, \chi_i(e_{k+1}) = n,\, \chi_i(e_{k}) = m\right) \\
    &= \sum_{l = m}^{\infty} \sum_{q = n - m}^{\infty} 
       P_\root \left( W_k = x,\, W_{k+1} = y,\, e_k = l,\, e_{k+1} - e_k = q, \chi_i(e_{k}) = m,\, \chi_i(e_{k+1}) - \chi_i(e_{k}) = n - m \right) \\
    &= \sum_{l = m}^{\infty} \sum_{q = n - m}^{\infty} 
       P_y\left( Z_v \in \mT_y \setminus \{y\} \ \forall\, v \geq 1 \right) \\
    &\hspace{.3in}  \times P_x\left( Z_r \in \mT_y \ \text{for } r = 1,\dots,q-1,\ Z_q = y,\
        \sum_{s=1}^q \mathbf{1}_{(C(Z_s) = C_{i})} = n - m\right) \\
    &\hspace{.3in}  \times P_\root\left( Z_l = x,\, \sum_{s = 1}^l \mathbf{1}_{(C(Z_s) = C_i)} = m \right)
       \tag{by \eqref{TTT}} \\
    &= p(y, x)\left( \frac{1}{F(y, x)} - 1 \right) \times \sum_{l = m}^{\infty} P_\root\left( Z_l = x,\, \sum_{s = 1}^l \mathbf{1}_{(C(Z_s) = C_i)} = m \right) \\
    &\hspace{.3in}  \times \sum_{q = n - m}^{\infty} 
       P_x\left( Z_r \in \mT_y \ \text{for } r = 1, \dots, q - 1,\, Z_q = y,\,
       \sum_{s = 1}^q \mathbf{1}_{(C(Z_s) = C_i)} = n - m \right)
       \tag{by \eqref{MMM}} \\[0.6em]
    &= p(y, x)\left( \frac{1}{F(y, x)} - 1 \right) 
       T_i^{(m)}(\root, x) \,
       \frac{p(x, y)}{p(y, x)} \,
       S_i^{(n - m)}(y, x) 
       \tag{by \eqref{SSS} and \eqref{T(x,y)}} \\[0.6em]
    &= \left( \frac{1}{F(y, x)} - 1 \right) 
       T_i^{(m)}(\root, x) \,
       p(x, y) \,
       S_i^{(n - m)}(y, x).
\end{align*}

Collecting the above we have:
\begin{align}\label{TP}
\begin{split}
  P_\root(W_{k+1}=y,&\chi_i(e_{k+1})=n \mid W_k=x,\chi_i(e_k)=m )\\
  &=\frac{P_\root\left(W_{k+1}=y,\chi_i(e_{k+1})=n , W_k=x,\chi_i(e_k)=m\right)}{P_\root\left( W_k=x,\chi_i(e_k)=m\right)}\\
  &= \frac{T_i^{(m)}(\root, x) p(x, y) S_i^{(n-m)}(y, x)\left(\frac{1}{F(y, x)}-1\right)}{T_i^{(m)}(\root, x) p\left(x, x^{-}\right)\left(\frac{1}{F(x, x^-)}-1\right)}\\
  &= \frac{p(x, y)}{p\left(x, x^{-}\right)}\left(\frac{F\left(x, x^{-}\right)}{1-F\left(x, x^{-}\right)}\right)\left(\frac{1-F(y, x)}{F(y, x)}\right) S_i^{(n-m)}(y, x),
\end{split}
\end{align}
which completes the proof.
\end{proof}

\end{document}